\theoremstyle{plain}
\newtheorem{thm}{Theorem}[section]
\newtheorem{cor}[thm]{Corollary}
\newtheorem{pro}[thm]{Proposition}
\newtheorem{lem}[thm]{Lemma}
\newtheorem{proposition-principale}[thm]{Proposition principale}
\newtheorem{thm-principal}{Main Theorem}
\newtheorem{defi}[thm]{Definition}
\theoremstyle{definition}
\newtheorem{que}{Question}[section]
\newtheorem{eg}[thm]{Example}
\newtheorem{rem}[thm]{Remark}
\newtheorem*{thm-B}{Theorem B}
\newtheorem*{defi-G}{Definition}
\newtheorem*{thm-C}{Classification Theorem}
\newtheorem*{thm-A}{Theorem A}
\newtheorem*{thm-BB}{Theorem B'}
\def\C{\mathbf{C}}
\def\R{\mathbf{R}}
\def\Q{\mathbf{Q}}
\def\Z{\mathbf{Z}}
\def\bfk{\mathbf{k}}
\def\oZ{{\overline{\mathbf{Z}}}}
\def\P{\mathbb{P}}
\def\A{\mathbb{A}}
\def\F{{\mathbb{F}}}
\def\SS{\mathbb{S}}
\def\m{{\mathfrak{m}}}
\def\U{{\mathcal{U}}}
\def\V{{\mathcal{V}}}
\def\Hirz{{\mathbb{F}}}
\def\Hy{{\sf{Hyp}}}
\def\Hyp{{\sf{\tilde{Hyp}}}}
\def\Bij{\sf{Perm}}
\def\Dual{{\mathcal{D}}}
\def\exc{{\sf{exc}}}
\def\G{{\mathcal{G}}}
\def\Tho{G_{\mathrm{Far}}}
\def\Thom{G_{\mathrm{Dya}}}
\def\W{{\mathcal{W}}}
\def\B{\mathcal{B}}
\def\Aut{{\sf{Aut}}}
\def\Psaut{{\sf{Psaut}}}
\def\Bir{{\sf{Bir}}}
\def\dv{{\mathrm{DV}}}
\newcommand{\FW}{(FW)}
\def\PW{{{(PW)}}}
\def\FH{{{(FH)}}}
\def\Jac{{\sf{Jac}}}
\def\Far{{\mathrm{Far}}}
\def\Fib{{\mathrm{Fib}}}
\def\Dya{{\mathrm{Dyad}}}
\def\NS{{\mathsf{NS}}}
\def\PGL{{\sf{PGL}}\,}
\def\GL{{\sf{GL}}\,}
\def\SO{{\sf{SO}}\,}
\def\O{{\sf{O}}\,}
\def\Iso{{\sf{Iso}}\,}
\def\SL{{\sf{SL}}\,}
\newcommand{\Id}{{\rm Id}}
\def\Ind{{\mathrm{Ind}}}
\numberwithin{equation}{section}       
\begin{document}

\setlength{\baselineskip}{0.56cm}        
%
%

\title[Commensurating actions of birational groups]
{Commensurating actions of birational groups and groups of pseudo-automorphisms}
\date{June 29, 2019}
\author{Serge Cantat and Yves de Cornulier}
\address{IRMAR (UMR 6625 du CNRS)\\ 
Universit{\'e} de Rennes 1 
\\ France}
\address{CNRS and Univ Lyon, Univ Claude Bernard Lyon 1, Institut Camille Jordan, 43 blvd. du 11 novembre 1918, F-69622 Villeurbanne}
\email{serge.cantat@univ-rennes1.fr}
\email{cornulier@math.univ-lyon1.fr}



\subjclass[2010]{Primary 14E07, Secondary 14J50, 20F65}

%
%

%
%

%
%

\begin{abstract} 
Pseudo-automorphisms are birational transformations acting as regular automorphisms in codimension $1$. We import
ideas from geometric group theory to prove that a group of birational 
transformations that satisfies a fixed point property on {\sc{cat}}$(0)$ cubical complexes, for example a discrete countable group with Kazhdan Property (T), 
is birationally conjugate to a group acting by pseudo-automorphisms
on some non-empty Zariski-open subset. We apply this argument to classify groups of birational transformations of surfaces 
with this fixed point property up to birational conjugacy. 
\end{abstract}

\maketitle

\setcounter{tocdepth}{1}

\section{Introduction}

\subsection{Birational transformations and pseudo-automorphisms}
Let $X$ be a quasi-projective variety, over an algebraically closed field $\bfk$. Denote by $\Bir(X)$ the group of birational transformations of $X$ 
and by $\Aut(X)$  the subgroup of (regular) automorphisms of $X$. For the affine space of dimension $n$, 
automorphisms are invertible transformations $f\colon \A^n_\bfk \to \A^n_\bfk$ such that both $f$
and $f^{-1}$ are defined by polynomial formulas in affine coordinates:
\[
f(x_1, \ldots, x_n)=(f_1, \ldots, f_n), \; \; f^{-1}(x_1, \ldots, x_n)=(g_1, \ldots, g_n)
\]
with $f_i$, $g_i \in \bfk[x_1, \ldots, x_n]$. Similarly, birational transformations 
of $\A^n_\bfk$ are given by rational formulas, i.e.\ $f_i$, $g_i\in \bfk(x_1, \ldots, x_n)$.

Birational transformations may contract hypersurfaces.  {\bf{Pseu\-do-automorphisms}} are birational 
transformations that act as automorphisms in codimension $1$. Precisely, a birational transformation 
$f\colon X \dasharrow X$ is a pseudo-automorphism if there exist Zariski-open subsets $\U$
and $\V$ in $X$ such that $X\smallsetminus \U$ and $X\smallsetminus \V$ have codimension $\geq 2$ 
and $f$ induces an isomorphism from $\U$ to $\V$. The pseudo-automorphisms of $X$ 
form a group, which we denote by $\Psaut(X)$. For instance, all birational transformations of Calabi-Yau 
manifolds are pseudo-automorphisms; and there are examples of such manifolds
for which $\Psaut(X)$ is infinite while $\Aut(X)$ is trivial (see \cite{Cantat-Oguiso:2015}). Pseudo-automorphisms
are studied in Section~\ref{par:pseudo-automorphisms}.

\begin{defi}\label{d_psr}
Let $\Gamma\subset\Bir(X)$ be a group of birational transformations of an irreducible projective variety $X$. We say that $\Gamma$ is {\bf pseudo-regularizable} if there exists a triple $(Y,\U,\varphi)$ where
\begin{enumerate}
\item $Y$ is a projective variety and $\varphi\colon Y\dasharrow X$ is a birational map;
\item $\U$ is a dense Zariski open subset of $Y$;
\item $\varphi^{-1}\circ \Gamma \circ \varphi$ yields an action of $\Gamma$ by pseudo-automorphisms on $\U$.
\end{enumerate}
More generally if $\alpha:\Gamma\to\Bir(X)$ is a homomorphism, we say that 
it is pseudo-regularizable if $\alpha(\Gamma)$ is pseudo-regularizable.
\end{defi}

One goal of this article is to use rigidity properties of commensurating actions, a purely group-theoretic concept, to show that many group actions are pseudo-regularizable. In particular, we exhibit a class of groups for which all actions by birational transformations on projective varieties  are pseudo-regularizable.

\subsection{Property {\FW}}\label{par:Intro-FW}
The class of groups we shall be mainly interested in is characterized by a fixed point property appearing
in several related situations, for instance for actions on {\sc{cat}}$(0)$ cubical complexes.
Here, we adopt the viewpoint of commensurated subsets.
Let $\Gamma$ be a group, and $\Gamma \times S\to S$ an action of $\Gamma$ on a set $S$. Let
$A$ be a subset of $S$. One says that $\Gamma$ {\bf{commensurates}} $A$ if the symmetric difference
\[
\gamma(A)\triangle A= \left( \gamma(A)\smallsetminus A \right) \, \cup\,  \left( A\smallsetminus \gamma(A) \right)
\] 
is finite for every element $\gamma$ of $\Gamma$. One says that $\Gamma$
{\bf{transfixes}} $A$ if there is a subset $B$ of $S$ such that $A\triangle B$ is finite and $B$ is $\Gamma$-invariant:
$\gamma(B)=B,$ for every $\gamma$ in $\Gamma$.

A group $\Gamma$ has {\bf{Property {\FW}}} if, given any action of $\Gamma$ on any set $S$, all
commensurated subsets of $S$ are automatically transfixed. For instance, the cyclic group $(\Z,+)$ 
acts on itself by translation, this action commensurates $\Z_+$ but does not transfix it, hence $\Z$ 
does not have Property~{\FW}; more generally, Property~{\FW} is not satisfied by non-trivial free groups.
To get examples, recall that a countable group $\Gamma$ has Kazhdan {\bf{Property (T)}} if every affine isometric 
action of $\Gamma$ on a Hilbert space fixes a point:  Property (T) implies~{\FW}, so that
all lattices in higher rank simple Lie groups have Property~{\FW}, for instance $\SL_m(\Z)$  when $m\geq 3$ (see \cite{Cornulier:Survey-FW} and
Section~\ref{par:(FW)-(T)}).
The group $\SL_2(\Z[\sqrt{5}])$ also has Property~{\FW} without satisfying Property~(T) (see \cite{Cornulier:Survey-FW}).

Property {\FW} is discussed in Section~\ref{par:FW}. 
Let us mention that among its various characterizations, one is: 
every combinatorial action of $\Gamma$ on a {\sc{cat}}(0) cube complex fixes 
some cube. Another, for $\Gamma$ finitely generated, is that all its infinite connected Schreier graphs are one-ended
(see \cite{Cornulier:Survey-FW}). 

\subsection{Pseudo-regularizations}
Let $X$ be a projective variety. The group $\Bir(X)$ does not really act on $X$, because there
are indeterminacy points; it does not act on the set of hypersurfaces either, because some of
them may be contracted. As we shall see, one can introduce the set $\Hyp(X)$ of all irreducible and reduced hypersurfaces in all birational models 
$X'\dasharrow X$ (up to a natural identification). Then, 
there is a natural action of the group $\Bir(X)$ on this set, given by strict transforms  of hypersurfaces. 
Indeed, the rigorous construction of this action follows naturally from the action on the set of divisorial valuations.
Since this action commensurates the subset $\Hy(X)$ of hypersurfaces of $X$, this construction leads to the following result.

\begin{thm-A}
{\sl{Let $X$ be a projective variety over an algebraically closed field. Let
$\Gamma$ be a subgroup of $\Bir(X)$. If $\Gamma$ has Property {\FW}, then $\Gamma$ is pseudo-regularizable.}}
\end{thm-A}

There is also a relative version of Property {\FW}  for pairs of groups $\Lambda \leq \Gamma$, which leads to 
a similar pseudo-regularization theorem for the subgroup $\Lambda$: this is discussed in Section~\ref{par:distorsion}, 
with applications to distorted birational transformations. 

\begin{rem}\label{rem:intro-extreme-cases}
Theorem~A provides a triple $(Y,\U,\varphi)$ such that $\varphi$ conjugates $\Gamma$ to a group of pseudo-automorphisms on the open subset $\U\subset Y$.
There are two extreme cases for the pair $(Y,\U)$ depending on the size of the boundary $Y\smallsetminus \U$. 
If this boundary is empty, $\Gamma$ acts by pseudo-automorphisms on a projective variety $Y$. If it is ample, its complement 
$\U$ is an affine variety; if $\U$ is smooth (or locally factorial) then $\Gamma$ actually acts by regular automorphisms on $\U$ (see Section~\ref{par:affine}). 
Thus, in the study of groups of birational transformations, {\sl{pseudo-automorphisms of projective varieties and regular
automorphisms of affine varieties deserve specific attention}}. 
\end{rem}

\subsection{Classification in dimension $2$}
In dimension $2$, pseudo-automorphisms do not differ much from automorphisms; for instance, $\Psaut(X)$
coincides with $\Aut(X)$ if $X$ is a smooth projective surface. Thus, for groups with Property {\FW}, Theorem~A can be used
to reduce the study of birational transformations to the study of automorphisms of quasi-projective
surfaces. Combining results of Danilov
and Gizatullin on automorphisms of affine surfaces with a theorem of Farley on groups of piecewise affine transformations 
of the circle, we prove the following theorem. 

\begin{thm-B}
{\sl{Let $X$ be a smooth, projective, and irreducible surface, over an algebraically closed field. Let
$\Gamma$ be an infinite subgroup of $\Bir(X)$. If $\Gamma$ has Property {\FW}, there 
is a birational map $\varphi\colon Y\dasharrow X$ 
such that 
\begin{enumerate}
\item $Y$ is the projective plane $\P^2$, a Hirzebruch surface $\Hirz_m$ with $m\geq 1$, or the product of a curve $C$ by the projective line $\P^1$. If the characteristic of the field is positive, $Y$ is the projective plane $\P^2_\bfk$.
\item $\varphi^{-1}\circ \Gamma\circ \varphi$ is contained in $\Aut(Y)$. 
\end{enumerate}
}}
\end{thm-B}

\begin{rem}
There is an infinite subgroup  of $\Aut(Y)$ with Property~{\FW} for all surfaces $Y$ of Assertion~(1). 
Namely, if the algebraically closed field $\mathbf{k}$ has characteristic zero, $\Aut(Y)$ contains $\PGL_2(\mathbf{k})$ or the quotient of $\GL_2(\bfk)$ 
by a central cyclic subgroup in case $Y$ is a Hirzebruch surface. Thus, there is a morphism $\SL_2(\Z[\sqrt{5}])\to \Aut(Y)$ with finite kernel  and, as mentioned in Section~\ref{par:Intro-FW}, $\SL_2(\Z[\sqrt{5}])$ has Property~{\FW}.
 In characteristic $p>0$, the only case is that of $\P^2_{\bfk}$,  whose automorphism group contains the group $\mathrm{PSL}_3((\mathbf{F}_p[t]))$, which has Kazhdan's Property~(T).
\end{rem}

\begin{rem} The group $\Aut(Y)$ has finitely many connected components for all surfaces $Y$ 
of Assertion~(1) in Theorem~B. Thus, changing $\Gamma$
into a finite index subgroup, one gets a subgroup of $\Aut(Y)^0$; here $\Aut(Y)^0$ denotes the 
connected component of the identity: this is an algebraic group, acting algebraically on $Y$.\end{rem}

\begin{eg}\label{applx}
Groups with Kazhdan Property (T) satisfy Property {\FW} (see Section~\ref{par:FW}). Also, 
if $Y$ is a Hirzebruch surface or a product $C\times \P^1$ for some curve $C$, then $\Aut(Y)$
does not contain any group with Property (T), because the group $\PGL_2(\bfk)$ does not contain 
such a group. Thus, Theorem~B extends Theorem~A of \cite{Cantat:Annals}, at least in the projective case, and
 the present article offers a new proof of that result. 
Theorem~B can also be applied to the group $\SL_2(\Z[\sqrt{d}])$ when the integer $d\ge 2$ is not a perfect square:
every action of this group on a projective surface
by birational transformations is conjugate to an action by regular automorphisms on 
$\P^2_\bfk$, the product of a curve $C$ by the projective line $\P^1_\bfk$, or a Hirzebruch surface. 
Theorem~\ref{tSL2} provides a more precise result, based on Theorem~B and Margulis' superrigidity theorem.
\end{eg}

\begin{rem} Let $X$ be a normal projective variety. 
One can ask whether $\Bir(X)$ transfixes $\Hy(X)$, or equivalently is pseudo-regularizable (see Theorem~\ref{thm:FW-pseudo-regularization}). 
For surfaces, this holds precisely when $X$ is not birationally equivalent to the product of the projective line with a curve. 
See \S\ref{surf_birt} for more precise results.
\end{rem}

\subsection{Acknowledgement}

This work benefited from interesting discussions with J\'er\'emy Blanc, Vincent Guirardel, and Christian Urech. 
We are grateful to the referees, for pointing out a gap in a proof and suggesting many improvements.

\section{Pseudo-automorphisms}\label{par:pseudo-automorphisms}

This preliminary section introduces useful notation for birational transformations and pseudo-automorphisms, and presents a few basic results.

\subsection{Birational transformations}
Let $X$ and $Y$ be two irreducible and reduced algebraic varieties over an algebraically closed field $\bfk$. 
Let $f\colon X\dasharrow Y$ be a birational map. Choose dense Zariski open subsets $U\subset X$ and $V\subset Y$ such that 
$f$ induces an isomorphism $f_{U,V}\colon U\to V$. Then the graph $\mathfrak{G}_f$ of $f$ is defined as 
the Zariski closure of $\{(x,f_{U,V}(x)):x\in U\}$ in $X\times Y$; it does not depend on the choice of $U$ and $V$. 
The graph $\mathfrak{G}_f$ is an irreducible variety; both projections 
\[
u\colon \mathfrak{G}_f  \to X\quad  {\text{and}} \quad 
v\colon \mathfrak{G}_f \to Y
\] 
are birational morphisms and $f=v\circ u^{-1}$. 

We shall denote by  $\Ind(f)$ the indeterminacy set of the birational map $f$.

\begin{thm}[Theorem 2.17 in \cite{iitaka1982algebraic}]\label{normal2}
Let $f:X\dashrightarrow Y$ be a rational map, with $X$ a normal variety and $Y$ a projective variety. 
Then the indeterminacy set of $f$ has codimension $\ge 2$.
\end{thm}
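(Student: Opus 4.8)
The plan is to reduce the statement to a purely local question at points of codimension one, and then to extend $f$ across such points using normality together with the properness of $Y$. First I would recall that the indeterminacy locus $\Ind(f)$ is closed in $X$, being the complement of the (open) locus on which $f$ is defined, and that $f$ is defined on a dense open set. Hence $\Ind(f)$ is a proper closed subset, so it has no component of codimension $0$, and it has codimension $\geq 2$ if and only if it contains no irreducible hypersurface. Since a closed set containing an irreducible hypersurface $Z$ necessarily contains the generic point $\eta_Z$ (of which $Z$ is the closure), it therefore suffices to prove that $f$ is defined at the generic point $\eta_Z$ of every irreducible hypersurface $Z\subset X$.

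Next I would exploit normality to put a discrete valuation at my disposal. Fix such a $Z$ and write $\eta=\eta_Z$. The local ring $\OO_{X,\eta}$ is a one-dimensional Noetherian local domain whose fraction field is the function field $K=\bfk(X)$. Because $X$ is normal, $\OO_{X,\eta}$ is integrally closed, and a one-dimensional integrally closed Noetherian local domain is a discrete valuation ring (this is the $R_1$ part of Serre's normality criterion). Let $v\colon K^\times\to\Z$ denote the associated valuation, so that $\OO_{X,\eta}$ is exactly its valuation ring.

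Then I would extend $f$ across $\eta$ by a coordinate renormalization. Fix a closed embedding $Y\hookrightarrow\P^N$ and write $f=[f_0:\cdots:f_N]$ with $f_i\in K$, not all zero. Put $m=\min_i v(f_i)$ and choose $h\in K^\times$ with $v(h)=-m$; the functions $g_i:=hf_i$ then satisfy $v(g_i)\geq 0$ for every $i$, with equality for at least one index $i_0$. Thus each $g_i$ lies in $\OO_{X,\eta}$, hence is regular on some open neighborhood of $\eta$, while $g_{i_0}$ is a unit and so does not vanish at $\eta$. Shrinking to a common open neighborhood $U\ni\eta$ on which all the $g_i$ are regular and $g_{i_0}$ is nonvanishing, the formula $[g_0:\cdots:g_N]$ defines a morphism $U\to\P^N$. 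As it agrees with $f$ on a dense open subset and $Y$ is closed in $\P^N$, its image lies in $Y$; this morphism extends $f$ and shows that $f$ is defined at $\eta$, which completes the reduction and the proof. (Equivalently, one may phrase the extension through $\OO_{X,\eta}$ as an instance of the valuative criterion of properness applied to the proper variety $Y$.)

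I expect the main obstacle to be the passage from the local, valuation-theoretic data to an honest morphism on a Zariski-open neighborhood: one must simultaneously arrange that each renormalized coordinate $g_i$ is regular near $\eta$, that the coordinates have no common zero there, and that the resulting map genuinely factors through the closed subvariety $Y$ rather than merely landing in $\P^N$. Everything else—the reduction to codimension-one points and the identification of $\OO_{X,\eta}$ as a discrete valuation ring—is formal once normality is invoked.
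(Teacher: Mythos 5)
Your proof is correct. The paper offers no proof of this statement—it is quoted as Theorem 2.17 of Iitaka's book—and your argument (reduction to the generic points of codimension-one subvarieties, the discrete valuation ring supplied by normality, renormalization of the homogeneous coordinates $f_i$ to clear the minimal valuation, and the use of closedness of $Y$ in $\P^N$ to factor the extended morphism through $Y$) is exactly the standard argument behind that citation, so there is nothing to flag.
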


\begin{eg} The transformation of the affine plane $(x,y)\mapsto (x,y/x)$ is 
birational, and its indeterminacy locus is the line $\{x=0\}$: this set of codimension $1$ is mapped ``to infinity''. If the affine 
plane is compactified by the projective plane, the transformation becomes $[x:y:z]\mapsto [x^2:yz:xz]$, with two indeterminacy points. 
\end{eg}

The {\bf{total transform}} of a closed subset $Z\subset X$ is denoted by  $f_*(Z)$; by definition, $f_*(Z)=v(u^{-1}(Z))$. 
If $Z$ is   irreducible and is not contained in $\Ind(f)$, we denote
by $f_\circ(Z)$ its {\bf{strict transform}}, defined as the Zariski closure of $f(Z\smallsetminus \Ind(f))$. We say that an irreducible hypersurface
$W\subset X$ is contracted if it is not contained in the indeterminacy set and the codimension of its strict transform is larger than $1$; 
the {\bf{exceptional divisor}} of $f$ is the union of all contracted hypersurfaces. 

We say that $f$ is a {\bf{local isomorphism}} near a point $x\in X$ if there are open sets $U\subset X$ 
and $V\subset Y$ such that $U$ contains $x$ and $f$ induces an isomorphism from $U$ to $V$.
The {\bf{exceptional set}} of $f$ is the subset of $X$ along which $f$
is not a local isomorphism; this set is Zariski closed, and is made of three
parts: the indeterminacy locus, the exceptional divisor, and a residual part of codimension~$\geq 2$.

\subsection{Pseudo-isomorphisms}
 A birational map $f\colon X \dasharrow Y$ is a {\bf{pseudo-isomor\-phism}} if one 
can find Zariski open subsets $\U\subset X$ and $\V \subset Y$ such that 
\begin{itemize}
\item[(i)] $f$ realizes a regular isomorphism 
from $\U$ to $\V$ and 
\item[(ii)] $X\smallsetminus \U$ and $Y\smallsetminus \V$ have codimension $\geq 2$. 
\end{itemize}

Pseudo-isomorphisms from $X$ to itself are called {\bf{pseudo-automorphisms}} (see \S~\ref{par:Intro-FW}). 
{\sl{The set of pseudo-automorphisms of $X$ is a subgroup $\Psaut(X)$ of~$\Bir(X)$}}. 

\begin{eg} Start with the
standard birational involution $\sigma_n\colon \P^n_\bfk\dasharrow \P^n_\bfk$ which is defined in homogeneous 
coordinates by $\sigma_n[x_0:\ldots : x_n]=[x_0^{-1}:\ldots : x_n^{-1}].$
Blow-up the $(n+1)$ vertices of the simplex $\Delta_n=\{[x_0:\ldots : x_n ]; \; \prod x_i=0\}$; this provides a smooth
rational variety $X_n$ together with a birational morphism $\pi\colon X_n\to \P^n_\bfk$. Then,  $\pi^{-1}\circ \sigma_n\circ \pi$ 
 is a pseudo-automorphism  of $X_n$, and is an automorphism if $n\leq 2$. 
 \end{eg}

\begin{pro}\label{pro:pseudo-isomorphism}
Let $f\colon X\dasharrow Y$ be a birational map between two (irreducible, reduced) normal algebraic varieties.  Then, the following properties are equivalent: 
\begin{enumerate}
\item The birational maps $f$ and $f^{-1}$ do not contract any hypersurface, and their
indeterminacy sets have codimension $\geq 2$ in $X$ and $Y$ respectively.
\item The birational map $f$ is a pseudo-isomorphism from $X$ to $Y$. 
\end{enumerate}
\end{pro}

\begin{proof} Denote by $g$ the inverse of $f$. The second assertion implies the first because any hypersurface intersects the complement of
every closed subset of codimension~$\geq 2$. Let us prove that the first assertion implies the second. 
Let $\U_0\subset X$ (resp. $\V_0\subset Y$) be the complement of the singular locus of $X$ (resp. $Y$) and the indeterminacy locus of $f$ (resp. $g$).
Let $\U$ be the pre-image of $\V_0$ by the birational map $f_{\U_0}\colon \U_0\dasharrow \V_0$; the complement of $\U$ in $\U_0$, and therefore
in $X$ too, has codimension $\geq 2$ because the codimension of $Y\setminus \V_0$ is at least $2$ and $f$ does not contract any hypersurface. 
Define $\V\subset \V_0$ to be the pre-image of $\U$ by $g$ (restricted to $\V_0$); the codimension of $Y\setminus \V$ is also $\geq 2$. Then, the restriction $f_{\U}\colon \U\dasharrow \V$
is a regular isomorphism, with inverse $g_{\V}\colon \V\dasharrow \U$. 
\end{proof}

\begin{eg}
Let $X$ be a smooth projective variety with trivial canonical bundle $K_X$. Let $\Omega$ be a non-vanishing section of $K_X$, and let
$f$ be a birational transformation of~$X$. Then, $f^*\Omega$ extends from $X\smallsetminus \Ind(f)$ to $X$ and determines a new section of
$K_X$; this section does not vanish identically because $f$ is dominant, hence it does not vanish at all because $K_X$ is trivial. As
a consequence, $f$ does not  contract any hypersurface, because otherwise $f^*\Omega$ would vanish along this hypersurface. 
Since $X$ is projective, the codimension of $\Ind(f)$ is $\geq 2$ (Theorem~\ref{normal2}). 
Thus, $f$ is a pseudo-automorphism of $X$, and  $\Bir(X)=\Psaut(X)$. 
We refer to \cite{Cantat-Oguiso:2015, Fryers:preprint} for families of Calabi-Yau varieties with an infinite group of pseudo-automorphisms.
\end{eg}

\subsection{Projective varieties}

\subsubsection{Smooth varieties}
Assume that $X$ and $Y$ are smooth.
The {\bf{Jacobian determinant}} $\Jac(f)(x)$ is defined in local coordinates 
as the determinant of the differential $df_x$; the rational function $\Jac(f)$ 
depends on the chosen coordinates (on $X$ and $Y$), but its zero locus does not. 
The zeroes of $\Jac(f)$ form a hypersurface of  $X\setminus\Ind(f)$; the zero locus of $\Jac(f)$ will be defined as the
Zariski closure of this hypersurface in $X$.

\begin{pro}\label{pro:pi-smooth}
Let $f\colon X\dasharrow Y$ be a birational transformation between two smooth varieties. Assume that $\Ind(f)$
and $\Ind(f^{-1})$ have codimension $\geq 2$. The following properties are equivalent.  
\begin{enumerate}
\item The Jacobian determinants of $f$ and $f^{-1}$ do not vanish.
\item For every  $q\in X\smallsetminus \Ind(f)$, $f$ is an isomorphism from a 
neighborhood of $q$ to a neighborhood of $f(q)$, and the same holds for $f^{-1}$.
\item The birational map $f$ is a pseudo-isomorphism from $X$ to $Y$. 
\end{enumerate}
\end{pro}

\begin{proof}
Denote by $g$ the inverse of $f$. If the Jacobian determinant of $f$ vanishes at some point of $X\smallsetminus \Ind(f)$, then it 
vanishes along a hypersurface $V\subset X$. If (1) is satisfied, then $f$ does not contract any positive
dimensional subset of $X\smallsetminus\Ind(f)$: $f$ is a quasi-finite map from $X\smallsetminus \Ind(f)$ to its image, and so 
is $g$. Zariski's main theorem implies that $f$ realizes an isomorphism from $X\smallsetminus \Ind(f)$ to 
$Y\smallsetminus \Ind(g)$ (see \cite{Milne:BookAG}, Prop. 8.57). Thus, (1) implies (2) and (3). 
Since (3) implies (1), this concludes the proof.   \end{proof}

\begin{pro}[see \cite{Bedford-Kim:2014}] Let $f\colon X\dasharrow Y$ be a pseudo-isomorphism between two 
smooth projective varieties. Then
\begin{enumerate}
\item the total transform of $\Ind(f)$ by $f$ is equal to $\Ind(f^{-1})$;
\item $f$ has no isolated indeterminacy point; 
\item if $\dim(X)=2$, then $f$ is a regular isomorphism. 
\end{enumerate}
\end{pro}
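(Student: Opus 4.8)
The plan is to prove the three assertions about a pseudo-isomorphism $f\colon X\dasharrow Y$ between normal projective varieties, relying on Proposition~\ref{pro:pseudo-isomorphism} and on Theorem~\ref{normal2}, which guarantees that $\Ind(f)$ and $\Ind(f^{-1})$ already have codimension $\geq 2$ since $X$ and $Y$ are projective and normal. By Proposition~\ref{pro:pseudo-isomorphism} applied in this setting, $f$ restricts to an isomorphism between open subsets $\U\subset X$ and $\V\subset Y$ whose complements have codimension $\geq 2$; moreover neither $f$ nor $f^{-1}$ contracts a hypersurface, so the ``bad'' loci are exactly the indeterminacy sets together with their total transforms.

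For assertion~(1), I would argue that the total transform $f_*(\Ind(f))$ coincides with $\Ind(f^{-1})$ by symmetry of the pseudo-isomorphism structure. First I would show $f$ is an isomorphism precisely away from $\Ind(f)$ on the source and away from $\Ind(f^{-1})$ on the target: indeed, since $f$ does not contract any hypersurface and has indeterminacy in codimension $\geq 2$, the locus where $f$ fails to be a local isomorphism is exactly $\Ind(f)$ (using part~(3) of Proposition~\ref{pro:pseudo-isomorphism} on smooth points, and noting the remaining failure lies in the singular locus, of codimension $\geq 2$). Then a point $q\in X$ is an indeterminacy point of $f$ if and only if its image under the graph projection is a positive-dimensional fiber; running the same analysis for $g=f^{-1}$ and matching the two graphs $\mathfrak{G}_f$ and $\mathfrak{G}_g$ (which are canonically identified via the involution swapping the two factors of $X\times Y$) identifies $f_*(\Ind(f))$ with $\Ind(g)=\Ind(f^{-1})$.

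For assertion~(2), I would suppose for contradiction that $p\in X$ is an isolated indeterminacy point of $f$. The key mechanism is a discrepancy/volume argument: resolving $f$ by a sequence of blow-ups $\pi\colon Z\to X$ with $v\colon Z\to Y$, the point $p$ is replaced by exceptional divisors which $v$ must contract, since $p$ is isolated in $\Ind(f)$ and $f$ contracts no hypersurface. One then compares the canonical classes or uses the fact that a pseudo-isomorphism pulls back a generating volume form without zero or pole (as in the Calabi--Yau example in the excerpt); an isolated indeterminacy point forces a divisorial contraction on the $Y$ side, contradicting that $f^{-1}$ contracts no hypersurface together with $\mathrm{codim}\,\Ind(f^{-1})\geq 2$. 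I expect this to be the main obstacle, since it requires a careful local analysis of the exceptional divisors over $p$ and of how they map under $v$; the cleanest route is probably to invoke the structure of $f_*(\Ind(f))=\Ind(f^{-1})$ from~(1) and a dimension count showing an isolated point cannot have a codimension-$\geq 2$ total transform filling out the indeterminacy of the inverse.

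For assertion~(3), the surface case, I would combine~(2) with the two-dimensional geometry: on a smooth (or normalized) projective surface, indeterminacy sets have codimension $\geq 2$, hence are finite sets of points, so by~(2) they must be empty. Thus both $f$ and $f^{-1}$ are morphisms with no indeterminacy, and being mutually inverse birational morphisms they are regular isomorphisms. I would take care to reduce to the smooth situation, or to argue directly on the normal surface $X$ that the absence of isolated indeterminacy points forces $\Ind(f)=\emptyset$, after which $f=v\circ u^{-1}$ is everywhere defined and bijective.
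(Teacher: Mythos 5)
Your sketches for assertions (1) and (3) are essentially the paper's own route: (1) rests on the fact that a pseudo-isomorphism is a local isomorphism exactly off its indeterminacy set (Proposition~\ref{pro:pseudo-isomorphism}) combined with the symmetry of the graph, and (3) follows from (2) because indeterminacy sets of birational maps of normal projective surfaces are finite. The genuine gap is in assertion (2), and both mechanisms you propose for it fail. The volume-form idea is unavailable: a nowhere-vanishing section of $K_X$ exists only in the Calabi--Yau situation of the paper's example, and ``comparing canonical classes'' is left unspecified. More seriously, the contradiction you aim for rests on a false principle. It is true that the exceptional divisors over $p$ must be contracted by $v\colon Z\to Y$ (a divisor over $p$ dominating a hypersurface $W\subset Y$ would be the strict transform of $W$, so $f^{-1}$ would contract $W$ onto $p$); but this is \emph{not} a contradiction. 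The hypothesis ``$f^{-1}$ contracts no hypersurface'' constrains strict transforms of hypersurfaces of $Y$ only; it says nothing about divisors of a resolution that are exceptional for \emph{both} projections, and such divisors genuinely occur for pseudo-isomorphisms in dimension $\geq 3$: for the Atiyah flop between the two small resolutions of the quadric cone $\{xy=zw\}\subset\P^4$, the common resolution contains a divisor $E\cong\P^1\times\P^1$ contracted to a curve on each side. So at the end of your blow-up analysis you have produced a divisor contracted on both sides and nothing more; the isolatedness of $p$ (i.e.\ that $u(E)$ is a point rather than a curve) has not been used, and no contradiction has been reached.

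Your fallback ``dimension count'' is closer to the truth but omits the ingredient that actually makes the proof work: \emph{connectedness} of total transforms. The paper's argument is: $C=f_*(\{p\})$ is positive-dimensional and contained in $\Ind(f^{-1})$; then, for $q\in C$, the total transform $D_q=(f^{-1})_*(\{q\})$ is positive-dimensional (the fiber of $\mathfrak{G}_f\to Y$ over $q$ lies in $X\times\{q\}$, hence maps injectively to $X$), is contained in $\Ind(f)$ by the local-isomorphism property, contains $p$, and is connected --- by Zariski's main theorem applied to the projection $\mathfrak{G}_f\to Y$ onto the \emph{normal} variety $Y$. Connectedness is precisely what contradicts the isolatedness of $p$. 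Without it, nothing you have established rules out the configuration $\Ind(f)=\{p\}\sqcup S$ with $S$ positive-dimensional and disjoint from a neighborhood of $p$, and $D_q=\{p\}\cup(\text{a piece of }S)$: every dimension count, and also the equality $f_*(\Ind(f))=\Ind(f^{-1})$ from (1), is compatible with such a picture. So (2) cannot be deduced from (1) by dimension considerations alone; once the connectedness theorem is invoked, however, the whole proposition reduces to the paper's short argument.
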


\begin{proof} Since $X$ and $Y$ are projective, $\Ind(f)$ and $\Ind(f^{-1})$ have codimension $\geq 2$: we can 
apply Propositions~\ref{pro:pseudo-isomorphism} and~\ref{pro:pi-smooth}. 
Let $p\in X$ be an indeterminacy point of the pseudo-isomorphism $f\colon X\dasharrow Y$.
Then $f^{-1}$ contracts a subset $C\subset Y$ of positive dimension  on $p$. Since $f$ and
$f^{-1}$ are local isomorphisms on the complement of their indeterminacy sets, $C$ is contained
in $\Ind(f^{-1})$. The total transform of a point $q\in C$ by $f^{-1}$ is a connected subset of $X$ 
that contains $p$ and has dimension $\geq 1$. This set $D_q$ is contained in $\Ind(f)$ because
$f$ is a local isomorphism on the complement of $\Ind(f)$; since $p\in D_q\subset \Ind(f)$, $p$ is
not an isolated indeterminacy point. This proves Assertions~(1) and~(2).
The third assertion follows from the second one because indeterminacy sets of birational transformations
of projective surfaces are finite sets. 
\end{proof}

\subsubsection{Divisors and N\'eron-Severi group} Let $W$ be a hypersurface of $X$, and let $f\colon X \dasharrow Y$ be a pseudo-isomorphism.
The divisorial part of the total transform $f_*(W)$  coincides with the
strict transform $f_\circ(W)$. Indeed, $f_*(W)$ and $f_\circ(W)$ coincide on the open subset of $Y$ on which 
$f^{-1}$ is a local isomorphism, and this open subset has codimension $\geq 2$. 

Recall that the N\'eron-Severi group $\NS(X)$ is the free abelian group of codimension $1$ cycles modulo
cycles which are numerically equivalent to $0$. Its rank is finite and is called the Picard number of $X$. 

\begin{thm}\label{thm:pseudo-automorphisms-neron-severi}
The action of pseudo-isomorphisms on N\'eron-Severi groups is functorial: $(g\circ f)_*=g_*\circ f_*$ for all 
pairs of pseudo-isomorphisms $f\colon X\dasharrow Y$ and $g\colon Y\dasharrow Z$.
If $X$ is a normal projective variety, the group $\Psaut(X)$ acts linearly on the N\'eron-Severi group $\NS(X)$; 
this provides a morphism 
\[
\Psaut(X)\to \GL(\NS(X)).
\]
The kernel of this morphism is contained in $\Aut(X)$ and contains $\Aut(X)^0$ as a finite index subgroup. 
\end{thm}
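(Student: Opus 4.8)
The plan is to prove the three assertions of Theorem~\ref{thm:pseudo-automorphisms-neron-severi} in turn.

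\medskip

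\noindent\textbf{Functoriality.}
First I would establish that strict transforms compose correctly on the level of N\'eron--Severi groups. The preceding remark shows that for a pseudo-isomorphism the divisorial part of the total transform $f_*(W)$ coincides with the strict transform $f_\circ(W)$, the discrepancy being supported on a subset of codimension $\geq 2$. The plan is to define the action $f_*$ on $\NS(X)$ by $[W]\mapsto [f_\circ(W)]$, extended linearly. To check $(g\circ f)_*=g_*\circ f_*$, I would work on the common open subset $\U\subset X$ where $f$, $g\circ f$, and the relevant inverses are all local isomorphisms onto their images; since $X\smallsetminus\U$ has codimension $\geq 2$, it carries no divisor class, so the identity $(g\circ f)_\circ(W)=g_\circ(f_\circ(W))$ holds after restricting to $\U$ and then taking Zariski closures. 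The only subtlety is to make sure strict transforms behave well under composition, i.e. that $f$ does not contract $W$ (which cannot happen for a pseudo-isomorphism) and that numerical equivalence is preserved; the latter follows because $f_*$ is induced by a genuine isomorphism in codimension~$1$, hence commutes with intersection numbers against curves avoiding the codimension-$\geq 2$ bad locus.

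\medskip

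\noindent\textbf{The linear action of $\Psaut(X)$.}
Taking $X=Y=Z$ in the functoriality statement, the map $f\mapsto f_*$ is a homomorphism $\Psaut(X)\to\GL(\NS(X))$, since $(\id)_*=\Id$ and $(f^{-1})_*=(f_*)^{-1}$ follow immediately. I would simply record that $\NS(X)$ has finite rank (the Picard number), so the target is a genuine linear group, giving the desired morphism.

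\medskip

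\noindent\textbf{Identifying the kernel.}
This is the substantial part. Suppose $f\in\Psaut(X)$ acts trivially on $\NS(X)$. I would first argue that $f$ must be a regular automorphism: if $f$ had an indeterminacy point, then by the preceding proposition (Assertion~(2)) it would contract a positive-dimensional subset, and the interplay between $\Ind(f)$ and $\Ind(f^{-1})$ produces an exceptional divisor whose class is moved nontrivially, contradicting triviality on $\NS(X)$. More carefully, one uses that a pseudo-isomorphism acting as the identity on $\NS(X)$ cannot move the nef/ample cone, and pulling back an ample class shows $f^*$ preserves ampleness; an automorphism in codimension~$1$ that preserves an ample class extends to a genuine automorphism by the standard argument that the graph closure maps isomorphically (its two projections are small and preserve a relatively ample class). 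Hence the kernel lies in $\Aut(X)$. For the reverse containment of $\Aut(X)^0$: the connected component $\Aut(X)^0$ is a connected algebraic group, so it acts trivially on the discrete group $\NS(X)$ — any algebraic family of automorphisms induces a continuous, hence constant, action on a lattice. Thus $\Aut(X)^0$ lies in the kernel. Finally, to see it has finite index, I would invoke that the image of $\Aut(X)$ in $\GL(\NS(X))$ is a discrete (indeed arithmetic-type) group and that $\Aut(X)/\Aut(X)^0$ is the group of components acting faithfully enough that the kernel of $\Psaut(X)\to\GL(\NS(X))$, intersected with $\Aut(X)$, differs from $\Aut(X)^0$ only by a finite group; concretely, the kernel consists of automorphisms acting trivially on $\NS(X)$, and such automorphisms form a group with finitely many components because $\Aut(X)$ has finitely many components over the subgroup fixing a polarization (a consequence of the projectivity of $X$ and boundedness of the automorphism group fixing an ample class).

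\medskip

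\noindent The step I expect to be the main obstacle is showing that an element of the kernel is genuinely regular, i.e.\ that triviality on $\NS(X)$ rules out indeterminacy; this requires care in relating the exceptional divisors of $f$ and $f^{-1}$ to nontrivial classes in $\NS(X)$, and in the higher-dimensional case one cannot simply invoke finiteness of indeterminacy sets as in the surface case.
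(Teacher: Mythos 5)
Your overall route coincides with the paper's: functoriality comes from the identity $f_*=f_\circ$ on divisorial parts together with the codimension-$\geq 2$ argument (this is exactly the remark preceding the theorem in the paper); the kernel is handled by observing that a kernel element sends the class of a hyperplane section $H$ to itself, so that $f_*(H)$ is again ample, and then invoking a Matsusaka--Mumford type theorem to conclude $f\in\Aut(X)$; and the finite-index statement is the theorem of Matsusaka and Lieberman that $\Aut(X)^0$ has finite index in the kernel of $\Aut(X)\to\GL(\NS(X))$. The paper cites both of these results rather than proving them, so your appeal to them as ``standard'' matches the paper's level of detail.

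Two caveats. First, your initial sketch of the kernel argument --- that an indeterminacy point of $f$ would produce ``an exceptional divisor whose class is moved nontrivially'' --- would fail: a pseudo-isomorphism by definition contracts no hypersurface, and its exceptional set has codimension $\geq 2$, so there is no exceptional divisor to exploit. Flops of threefolds are the standard example: they have indeterminacy points and generally act nontrivially on $\NS$, but the nontriviality is detected through the ample cone (an ample class is sent to a non-nef class), not through exceptional divisors. Your ``more careful'' second version is the correct one and is the paper's argument: triviality on $\NS(X)$ forces $f_*(H)$ to be ample, and the Matsusaka--Mumford theorem (a birational map that is an isomorphism in codimension $1$ and matches ample divisors is an isomorphism) finishes. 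Note, however, that your parenthetical justification of that theorem via ``the graph closure maps isomorphically'' is not how it is proved; the usual proof identifies $X$ with $\mathrm{Proj}$ of the section ring of $H$, which is insensitive to modifications in codimension $\geq 2$. Consequently your closing worry --- that the main obstacle is relating exceptional divisors of $f$ and $f^{-1}$ to classes in $\NS(X)$ --- is misplaced: there are no such divisors, and once preservation of ampleness is observed, regularity is exactly the quotable Matsusaka--Mumford statement.
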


As a consequence, if $X$ is projective the group $\Psaut(X)$ is an extension of a discrete linear subgroup of 
$\GL(\NS(X))$ by an algebraic group. 

\begin{proof}
The first statement follows from the equality $f_*=f_\circ$ on  divisors. The second follows from the first. 

For the last assertion, we shall need the following fact: {\sl{if $f\colon X\dasharrow Y$ is a pseudo-isomorphism  
between normal projective varieties such that $f_*(H_X)=H_Y$
for some pair of very ample divisors $H_X$ and $H_Y$ on $X$ and $Y$, then, $f$ is an isomorphism}} (see \cite{KSC:book} exercise 5.6, and \cite{Matsusaka-Mumford:1964}). 
Indeed, $(f^{-1})_*=(f^{-1})_\circ$ maps the linear system $\vert H_Y\vert$ bijectively onto $\vert H_X\vert$; if $f$ had an indeterminacy point, 
there would be a curve in its graph whose first projection would be a point $q\in X$ and second projection  would be a curve $C \subset Y$: 
since all members of $\vert H_Y\vert$ intersect $C$,  all members of $\vert H_X\vert$ should contain $q$, in contradiction with the very ampleness of $H_X$.

We can now study the kernel $K$ of the  representation $\Psaut(X)\to \GL(\NS(X))$. Fix an embedding $X\subset \P^m_\bfk$
and denote by $H_X$ the polarization given by hyperplane sections. For every $f$ in $K$, $f_*(H_X)$ is very ample
because its class in $\NS(X)$ coincides with the class of $H_X$. Thus, by what has just been proven, $f^*$ is an automorphism. To conclude, note that
$\Aut(X)^0$ has finite index in the kernel of the action of $\Aut(X)$ on $\NS(X)$: see \cite{Matsusaka:1958}, Theorem~6 in \S 11, and its extension 
to arbitrary projective varieties in \cite{Grothendieck:Bourbaki-Hilbert}, page 268; and see \cite{Lieberman:1978}, 
Proposition 2.2, for compact k\"ahler manifold. 
\end{proof}

\subsection{Affine varieties}\label{par:affine}
The group $\Psaut(\A^n_\bfk)$ coincides with the group $\Aut(\A^n_\bfk)$ of polynomial 
automorphisms of the affine space $\A^n_\bfk$: this is a special case of the following proposition. 

\begin{pro}\label{pro:pseudo-automorphisms-affine}
Let $Z$ be an affine variety. 
If $Z$ is locally factorial, the group $\Psaut(Z)$ coincides with the group $\Aut(Z)$.
\end{pro}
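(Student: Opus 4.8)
The plan is to show that a pseudo-automorphism of a factorial affine variety $Z$ extends to a regular automorphism, which suffices since the converse inclusion $\Aut(Z)\subset\Psaut(Z)$ is obvious. Let $f\colon Z\dasharrow Z$ be a pseudo-automorphism, so there are open subsets $\U,\V\subset Z$ with $Z\smallsetminus\U$ and $Z\smallsetminus\V$ of codimension $\geq 2$ and $f\colon\U\to\V$ an isomorphism. The key point is that $f$ is regular everywhere, and symmetrically so is $f^{-1}$.

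First I would work with the coordinate ring. Write $Z=\Spec(R)$ where $R=\bfk[Z]$ is the ring of regular functions; since $Z$ is affine, $R$ is a finitely generated $\bfk$-algebra, and since $Z$ is factorial, $R$ is a unique factorization domain, hence integrally closed. The map $f$ induces an isomorphism $f^*\colon\bfk(Z)\to\bfk(Z)$ of function fields. To prove $f$ is regular on all of $Z$, it suffices to show $f^*(R)\subset R$, i.e.\ that for every regular function $h\in R$ the pullback $h\circ f$ is again regular on $Z$. Now $h\circ f$ is a rational function on $Z$ that is regular on $\U$. Because $R$ is normal (a UFD is integrally closed) and $Z\smallsetminus\U$ has codimension $\geq 2$, the algebraic Hartogs principle applies: a rational function that is regular in codimension $1$ on a normal variety is globally regular. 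Concretely, $R=\bigcap_{\mathrm{ht}(\p)=1}R_\p$ for a normal noetherian domain, and each codimension-$1$ prime corresponds to a prime divisor of $Z$, all of which meet $\U$ since the complement has codimension $\geq 2$; hence $h\circ f$ lies in every such $R_\p$ and therefore in $R$. This gives $f^*(R)\subset R$, so $f$ is a morphism $Z\to Z$.

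Applying the identical argument to $f^{-1}$ (which is also a pseudo-automorphism, with the roles of $\U$ and $\V$ exchanged) yields $(f^{-1})^*(R)\subset R$, so $f^{-1}$ is a morphism as well. Two mutually inverse rational maps that are both everywhere regular are mutually inverse automorphisms, so $f\in\Aut(Z)$. This establishes $\Psaut(Z)\subset\Aut(Z)$, and combined with the trivial reverse inclusion finishes the proof.

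The main obstacle, and the only place where the hypotheses are genuinely used, is the extension step: I must be sure that the relevant complements have codimension $\geq 2$ so that Hartogs applies, and that $Z$ is normal so that regularity in codimension $1$ forces global regularity. Normality is guaranteed because factorial rings are integrally closed, and the codimension condition is built into the definition of pseudo-isomorphism. One subtlety to check is that I may pass from $\U$ to all codimension-$1$ points of $Z$: every prime divisor of the affine variety $Z$ does intersect $\U$ precisely because $Z\smallsetminus\U$ has codimension $\geq 2$ and therefore contains no divisor. Once this is in place, the argument is essentially the statement that on a normal affine variety the ring of global functions equals the intersection of the local rings at height-one primes, and the factoriality hypothesis enters only to supply normality.
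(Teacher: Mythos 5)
Your proof is correct, and it takes a genuinely different route from the paper's. The paper argues in coordinates: it fixes an embedding $Z\subset \A^m_\bfk$, writes $f$ component-wise as $f_i=p_i/q_i$, and uses factoriality (the local rings being UFDs) to arrange that the zero loci of $p_i$ and $q_i$ on $Z$ have no common component; if some $q_i$ vanished along a hypersurface of $Z$, that hypersurface would have to meet $\U$ (its complement having codimension $\geq 2$), and its generic point would be sent to infinity by $f$, contradicting that $f$ restricts to an isomorphism $\U\to\V\subset Z$; hence each $q_i$ is non-vanishing and $f$ is regular. You instead pull back regular functions and extend them across the codimension-$\geq 2$ boundary via the algebraic Hartogs principle, $R=\bigcap_{\mathrm{ht}(\p)=1}R_\p$ for a noetherian normal domain, obtaining $f^*(R)\subset R$ and hence that $f$, and symmetrically $f^{-1}$, is a morphism. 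Both arguments turn on the same geometric fact---a divisor cannot be contained in the codimension-$\geq 2$ boundary---but they use the hypothesis differently: the paper's coprime-fraction argument invokes the UFD property in an essential-looking way, whereas yours uses factoriality only to get normality, so your argument in fact proves the stronger statement that $\Psaut(Z)=\Aut(Z)$ for every \emph{normal} affine variety. What the paper's version buys in exchange is a more elementary, self-contained computation (explicit poles of fractions) that does not appeal to the Hartogs-type extension theorem.
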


\begin{proof}
Fix an embedding $Z\to \A^m_\bfk$. Rational functions on $Z$ are restrictions of rational functions 
on $\A^m_\bfk$. Thus, every birational transformation $f\colon Z\dasharrow Z$ is given by rational formulas
$
f  (x_1, \ldots, x_m)= (f_1, \ldots, f_m)
$
where each $f_i$ is a rational function. To show that $f$ is an automorphism, we only need to prove
that $f_i$ is in the local ring ${\mathcal{O}}_{Z,x}$ for every index $i$ and every point  $x\in Z$. Otherwise
\[
f_i=\frac{p_i}{q_i} \]
where $p_i$ and $q_i$ are relatively prime elements of the local ring ${\mathcal{O}}_{Z,x}$, and 
$q_i$ is not invertible. Fix  an irreducible factor $h$ of $q_i$, and 
 an open neighborhood $U$ of $x$ on which $p_i$, $q_i$ and $h$ are defined. 
The hypersurfaces $W_U(p_i)=\{z \in U; \; p_i(z)=0\}$
and $W_U(h)=\{z \in U; \; h(z)=0\}$ have no common components, hence the latter would be mapped to infinity by $f$, 
and $f$ would not be a pseudo-automorphism. This contradiction shows that all $f_i$ are regular and $f$ is an automorphism.
\end{proof}

\begin{eg} Consider the affine quadric cone $Q\subset \A^3$ defined by the equation $z^2=x^2+y^2$; the origin is a 
singular point of $Q$, and it is not factorial at that point, because the relation $z^2=(x+iy)(x-iy)$ shows that $z^2$
can be factorized in two distinct ways. Now, consider the affine variety $Z=Q\times \A^1\subset \A^4$, with coordinates
$(x,y,z,t)$. The map $f(x,y,z,t)=(x,y,z,t+ \frac{z}{x+iy})$ is a birational transformation of $Z$. 
The indeterminacy sets of $f$ and $f^{-1}$ coincide with 
the vertical line $ \{(0,0,0)\}\times \A^1$ and $f$ and $f^{-1}$ do not contract any hypersurface, hence $f$ is a pseudo-isomorphism. But $f$ is not an automorphism. 
\end{eg}

\section{Groups with Property {\FW}}\label{par:FW}

\subsection{Commensurated subsets and cardinal definite length functions (see \cite{Cornulier:Survey-FW})}

Let $G$ be a group, and $G \times S\to S$ an action of $G$ on a set $S$. Let
$A$ be a subset of~$S$. As in the Introduction, one says that $G$ {\bf{commensurates}} $A$ if the symmetric difference
$A\triangle gA$ is finite for every element $g\in G$. One says that $G$
{\bf{transfixes}} $A$ if there is a subset $B$ of $S$ such that $A\triangle B$ is finite and $B$ is $G$-invariant:
$gB=B$ for every $g$ in $G$. If $A$ is transfixed, then it is commensurated.
Actually, $A$ is transfixed if and only if the function $g\mapsto\#(A\triangle gA)$ is bounded on $G$. 

A group $G$ has {\bf{Property {\FW} }} if, given any action of $G$ on a set $S$, all
commensurated subsets of $S$ are automatically transfixed. 
More generally, if $H$ is a subgroup of $G$, then $(G,H)$ has {\bf{relative Property {\FW}}} if every commensurating 
action of $G$ is transfixing in restriction to $H$. This means that, if $G$  acts
on a set $S$ and commensurates a subset $A$, then $H$ transfixes automatically $A$. The case $H=G$ is Property {\FW} for $G$.

We refer to \cite{Cornulier:Survey-FW} for a detailed study of Property {\FW}. The next paragraphs present the two main 
sources of examples for groups with Property {\FW} or its relative version, namely Property~(T) 
and distorted subgroups.  

\begin{rem}
Property {\FW} should be thought of 
as a rigidity property.  To illustrate this idea, consider a group $K$ with Property {\PW}; by definition, this means 
that $K$ admits a commensurating action on a set $S$, with a commensurating subset $C$ such that the 
function $g\mapsto\#(C\triangle gC)$   has finite fibers. If $G$ is a group 
with Property {\FW}, then, every homomorphism $G\to K$ has  finite image.
\end{rem}

\subsection{Property {\FW} and Property (T) (see \cite{Cornulier:Survey-FW})}\label{par:(FW)-(T)}

One can rephrase Property {\FW} as follows: $G$ has Property {\FW} 
if and only if every isometric action on an ``integral Hilbert space'' $\ell^2(X,\Z)$ 
has bounded orbits, for any discrete set $X$.

A group has Property {\FH} if all its isometric actions on Hilbert spaces have fixed points. 
More generally, a pair $(G,H)$ of a group $G$ and a subgroup $H\subset G$ has relative Property {\FH} 
if every isometric $G$-action on a Hilbert space has an $H$-fixed point. Thus, the relative Property {\FH} implies the
relative Property {\FW}. 

By a theorem of Delorme and Guichardet, Property {\FH} is equivalent to Kazhdan's Property (T)  for countable
groups; this is the viewpoint we used to describe Property (T) in the introduction (see \cite{delaHarpe-Valette:Ast} for other 
equivalent definitions). Thus, Property (T) implies Property {\FW}. 
Kazhdan's Property (T) is satisfied by  lattices in semisimple Lie groups all of whose simple factors have Property (T), 
for instance if all simple factors have real rank $\ge 2$. For example, $\SL_3(\Z)$ satisfies Property (T). 

Property {\FW} is actually conjectured to hold for all irreducible lattices in semi-simple Lie groups of real rank $\ge 2$, 
such as $\SL_2(\mathbf{R})^k$ for $k\ge 2$. (here, irreducible means that the projection of the lattice {\it modulo} 
every simple factor is dense.) This is known in the case of a semisimple Lie group admitting at least one noncompact 
simple factor with Kazhdan's Property (T), for instance in $\SO(2,3)\times\SO(1,4)$, which admits 
irreducible lattices (see \cite{Cornulier:MathZ}).

\subsection{Distortion}

Let $G$ be a group. An element $g$ of $G$ is {\bf distorted} in $G$ if there exists a finite subset $\Sigma$ of $G$ generating a subgroup $\langle\Sigma\rangle$ containing $g$, such that $\lim_{n\to\infty}\frac{1}{n}|g^n|_\Sigma=0$; here, $\vert g \vert_\Sigma$ is the length of $g$ 
with respect to the set $\Sigma$. If $G$ is finitely generated, this condition  holds for some $\Sigma$ if and only if it holds for every finite generating subset of $G$. For example, every finite order element is distorted. 

\begin{eg}
Let $K$ be a field. The distorted elements of  $\SL_n(K)$  are exactly the virtually unipotent elements, 
that is, those elements  whose eigenvalues are all roots of unity; in positive characteristic, these are elements of finite order. 
By results of Lubotzky, Mozes, and Raghunathan (see \cite{lubotzky1993cyclic,Lubotzky-Mozes-Raghunathan:2001}), 
the same characterization holds in  $\SL_n(\mathbf{Z})$ when $n\ge 3$;  it also holds in 
$\SL_n(\mathbf{Z}[\sqrt{d}])$ when  $n\geq 2$ and $d\ge 2$ is not a perfect square. 
In contrast, in $\SL_2(\mathbf{Z})$, every element of 
infinite order is undistorted.
\end{eg}

\begin{lem}[see \cite{Cornulier:Survey-FW}]
Let $G$ be a group, and $H$ a finitely generated abelian subgroup of $G$ consisting of distorted elements. Then, the pair $(G,H)$ has relative 
Property~{\FW}. 
\end{lem}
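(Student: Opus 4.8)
The plan is to encode the commensurating data as a length function and to extract boundedness from distortion via its growth rate. Suppose $G$ acts on a set $S$ and commensurates a subset $A$, and set $\ell(g)=\#(A\triangle gA)$. Since symmetric difference is a metric on subsets of $S$ and $G$ acts by bijections, $\ell$ is symmetric, $\ell(g^{-1})=\ell(g)$, and subadditive, $\ell(gh)\le\ell(g)+\ell(h)$, the latter coming from $A\triangle ghA\subseteq(A\triangle gA)\cup g(A\triangle hA)$ together with $\#\,g(A\triangle hA)=\ell(h)$. By the equivalence recalled above (a subset is transfixed if and only if its displacement function is bounded), the subgroup $H$ transfixes $A$ precisely when $\ell$ is bounded on $H$. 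So the whole statement reduces to proving $\sup_{h\in H}\ell(h)<\infty$.

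First I would reduce to the cyclic case. Writing $H=\langle h_1,\dots,h_k\rangle$ and using commutativity, every element of $H$ has the form $h_1^{n_1}\cdots h_k^{n_k}$, so subadditivity gives $\ell(h_1^{n_1}\cdots h_k^{n_k})\le\sum_i\ell(h_i^{n_i})$; it therefore suffices to bound $\sup_{n\in\Z}\ell(h^n)$ for each generator $h$, and it is exactly finiteness of the generating set together with commutativity that makes this reduction valid. Distortion then enters: if $\Sigma$ is a finite set with $h\in\langle\Sigma\rangle$ and $\tfrac1n|h^n|_\Sigma\to0$, then expressing $h^n$ as a $\Sigma^{\pm1}$-word of minimal length and applying subadditivity yields $\ell(h^n)\le\big(\max_{s\in\Sigma}\ell(s)\big)\,|h^n|_\Sigma$, whence $\ell(h^n)=o(n)$.

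The crucial remaining point is that, for a single transformation $h$, this sublinear bound forces boundedness. Here I would decompose $S$ into $\langle h\rangle$-orbits. Since $\#(A\triangle hA)<\infty$, the function $\mathds{1}_A$ changes value only finitely often along orbits, so only finitely many orbits meet $A\triangle hA$; on every other orbit $A$ is $h$-invariant and contributes $0$ to each $\ell(h^n)$. On an infinite orbit, identified with $\Z$ so that $h$ acts as $+1$, the trace $P$ of $A$ satisfies $P\triangle(P+1)$ finite, i.e.\ $\mathds{1}_P$ is eventually constant at both ends; consequently $\#(P\triangle(P+n))$ stays bounded in $n$ when $P$ is finite or cofinite, and equals $|n|+O(1)$ precisely when $P$ is a half-line. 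The finitely many finite orbits and finite/cofinite infinite orbits meeting $A\triangle hA$ contribute a constant independent of $n$. Hence $\ell(h^n)=L\,|n|+O(1)$, where $L\in\Z_{\ge0}$ is the finite number of half-line orbits.

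The main obstacle is exactly this last step: subadditivity alone never upgrades $o(n)$ to $O(1)$ (witness $n\mapsto\log(1+|n|)$), so the argument must use that the linear growth rate $L$ is a nonnegative \emph{integer} arising from an orbit count. Granting that, $\ell(h^n)=o(n)$ forces $L=0$, whence $\sup_n\ell(h^n)<\infty$; combined with the reduction this bounds $\ell$ on $H$, and therefore $(G,H)$ has relative Property~{\FW}.
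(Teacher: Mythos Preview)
Your argument is correct. The paper itself does not supply a proof of this lemma; it is stated with a bare reference to \cite{Cornulier:Survey-FW}, so there is no in-paper proof to compare against. Your route---encoding the commensurating action as the subadditive length $\ell(g)=\#(A\triangle gA)$, using distortion to get $\ell(h^n)=o(n)$, and then analysing the $\langle h\rangle$-orbit structure to show that $\ell(h^n)$ has the form $L|n|+O(1)$ with $L$ a nonnegative integer, forcing $L=0$---is exactly the standard argument behind the cited result. The reduction to cyclic subgroups via commutativity and finite generation is also handled correctly: the point is that every element of $H$ is a product of a \emph{bounded} number of powers of the generators, so bounding each $\sup_n\ell(h_i^n)$ bounds $\ell$ on all of $H$.

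One cosmetic remark: in the orbit analysis you could note explicitly that the finitely many \emph{finite} orbits contribute a term that is periodic (hence bounded) in $n$; you do say they contribute a constant, which is the right conclusion, but the reason is periodicity rather than anything subtler.
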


This lemma provides many examples. For instance, if $G$ is any finitely generated nilpotent group and $G'$ is 
its derived subgroup, then $(G,G')$ has relative Property {\FH}; this result is due to Houghton, in a more general 
formulation encompassing polycyclic groups (see \cite{Cornulier:Survey-FW}).
Bounded generation by distorted unipotent elements can also be used to obtain nontrivial examples of groups with 
Property~{\FW}, including the above examples  $\SL_{n}(\mathbf{Z})$ for $n\geq 3$, and $\SL_{n}(\mathbf{Z}[\sqrt{d}])$. 
The case of $\SL_{2}(\mathbf{Z}[\sqrt{d}])$ is particularly interesting because it does not have Property (T).

\subsection{Subgroups of $\PGL_2(\bfk)$ with Property {\FW}}\label{par:Bass}

If a group $G$ acts on a tree $T$ by graph automorphisms, then $G$ acts on the set $E$ of 
directed edges of $T$ ($T$ is non-oriented, so each edge gives rise to a pair of opposite directed edges). 
Let $E_v$ be the set of directed edges pointing towards a vertex $v$. Then $E_v\triangle E_w$ is the set 
of directed edges lying in the segment between $v$ and $w$; it is finite of cardinality $2d(v,w)$, 
where $d$ is the graph distance. The group $G$ commensurates  $E_v$ for every $v$, and 
$\#(E_v\triangle gE_v)=2d(v,gv)$. Consequently, if $G$ has Property {\FW}, then it has Property (FA) 
meaning that every action of $G$ on a tree has bounded orbits. 
Combined with Proposition~5.B.1 of \cite{Cornulier:Survey-FW}, this argument  leads to the following lemma.

\begin{lem}[See \cite{Cornulier:Survey-FW}]
Let $G$ be a group with Property {\FW}, then all finite index subgroups of $G$ have Property {\FW}, and hence have Property~(FA). 
Conversely, if a finite index subgroup of $G$ has Property~{\FW}, then so does $G$.
\end{lem}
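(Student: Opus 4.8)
The plan is to prove both directions of the equivalence by relating Property {\FW} to the behaviour of commensurated subsets under restriction to, and induction from, finite index subgroups. The statement has two parts: (forward) if $G$ has Property {\FW}, so does every finite index subgroup $H$; and (converse) if some finite index $H\leq G$ has Property {\FW}, then $G$ does. The reference to Proposition~5.B.1 of \cite{Cornulier:Survey-FW} suggests the intended proof is a short deduction from a general permanence statement, so I would set up the two standard transfer constructions between $G$-actions and $H$-actions on sets.

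For the forward direction, suppose $G$ has Property {\FW} and let $H\leq G$ have finite index. Given an $H$-action on a set $S$ commensurating a subset $A$, I would induce it to a $G$-action. Concretely, form the induced set $\tilde S = (G\times S)/\!\sim$, where $(g,s)\sim(gh^{-1},hs)$ for $h\in H$; since $[G:H]<\infty$, a set of coset representatives $g_1,\dots,g_n$ lets me identify $\tilde S$ with the disjoint union of $n$ copies of $S$, and the induced subset $\tilde A=\bigsqcup_i g_i A$ is commensurated by $G$ (finiteness of each symmetric difference is inherited from the $H$-commensuration, using that $G$ permutes the cosets and only finitely many are involved). By Property {\FW} for $G$, $\tilde A$ is transfixed, i.e.\ there is a $G$-invariant $\tilde B$ with $\tilde A\triangle \tilde B$ finite. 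Restricting $\tilde B$ to the copy indexed by the identity coset produces an $H$-invariant subset of $S$ within finite symmetric difference of $A$, which transfixes $A$. The final clause---that Property {\FW} implies Property (FA)---is exactly the tree argument recalled in the paragraph preceding the lemma, applied to $H$.

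For the converse, suppose $H\leq G$ has finite index and Property {\FW}, and let $G$ act on a set $S$ commensurating a subset $A$. Then $H$ also commensurates $A$ (the symmetric differences $A\triangle hA$ for $h\in H$ are a subfamily of those for $G$), so by Property {\FW} for $H$ there is an $H$-invariant $B$ with $A\triangle B$ finite. The task is to upgrade this $H$-invariant near-fix to a $G$-invariant one. Using coset representatives $g_1,\dots,g_n$ for $H\backslash G$, I would consider the family $\{g_i B\}$ and build a candidate $G$-invariant set by a symmetrization over the (finite) coset space---for instance taking a suitable Boolean combination of the $g_iB$ that is manifestly $G$-invariant---and check it differs from $A$ by a finite set. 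The key quantitative input is that $A\triangle gA$ is bounded: since transfixing is equivalent to boundedness of $g\mapsto \#(A\triangle gA)$ (stated in Section~\ref{par:FW}), and this function is bounded on $H$ hence, by finiteness of the index and the cocycle-type inequality $\#(A\triangle gh A)\le \#(A\triangle gA)+\#(A\triangle hA)$, bounded on all of $G$, the subset $A$ is transfixed by $G$.

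The main obstacle is the converse direction, specifically producing a genuinely $G$-invariant set from the $H$-invariant one while keeping the symmetric difference with $A$ finite; the forward (induction) direction is essentially formal once the induced action is set up correctly. In fact the cleanest route to the converse is to bypass the explicit Boolean combination and argue via the length-function reformulation: the map $g\mapsto \#(A\triangle gA)$ is a cardinal definite length function, its boundedness on the finite-index subgroup $H$ forces boundedness on $G$ by the subadditivity estimate above together with the finitely many bounded ``coset correction'' terms $\#(A\triangle g_iA)$, and boundedness on $G$ is precisely transfixing. This is the step where I would lean on Proposition~5.B.1 of \cite{Cornulier:Survey-FW} rather than reprove the transfer by hand.
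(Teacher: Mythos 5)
Your proposal is correct, and it is worth noting that the paper itself contains no proof of this lemma: it obtains the (FW) $\Rightarrow$ (FA) clause from the directed-edge computation in the preceding paragraph and simply cites Proposition~5.B.1 of \cite{Cornulier:Survey-FW} for the two finite-index permanence statements. You instead supply self-contained arguments, and both are sound. For heredity to a finite-index subgroup $H\leq G$ you induce the commensurating $H$-action on $S$ to a $G$-action on $\bigsqcup_{i} \{g_i\}\times S$; the verification sketch is right, since each $g\in G$ sends the $i$-th copy to the $\sigma(i)$-th copy via some $h_i\in H$, so the symmetric difference of $\tilde A$ with its translate is a finite union of sets of the form $h_iA\triangle A$, and restricting a $G$-invariant transfixing set to the identity-coset copy gives an $H$-invariant one. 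For the converse, your first idea (a Boolean combination of the translates $g_iB$) is left vague, but you correctly discard it in favor of the length-function argument: $H$ transfixes $A$, so $h\mapsto\#(A\triangle hA)$ is bounded on $H$; writing $g=g_ih$ and using the subadditivity $\#(A\triangle ghA)\le\#(A\triangle gA)+\#(A\triangle hA)$ bounds the function on all of $G$ by $\max_i\#(A\triangle g_iA)$ plus the bound on $H$; and boundedness is equivalent to transfixing, a fact the paper states explicitly in Section~3.1. The trade-off between the two routes is clear: the paper's citation is shorter and delegates the bookkeeping to the survey, while your argument makes the lemma self-contained, with the converse direction in particular reduced to a clean quantitative estimate rather than an explicit construction of an invariant set.
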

 
On the other hand, Property~(FA) is not stable by taking finite index subgroups.

\begin{lem}\label{lem:pgl2-fw}
Let $\bfk$ be an algebraically closed field and $\Lambda$ be a subgroup of $\GL_2(\bfk)$. 
\begin{enumerate}
\item  $\Lambda$ has a finite orbit on the projective line if and only if it is virtually solvable, if and only if its Zariski closure 
does not contain $\SL_2$.

\item Assume that all finite index subgroups of $\Lambda$ have Property {(FA)} (e.g., $\Lambda$ has Property FW). 
If the action of $\Lambda$ on the projective line preserves a non-empty, finite set, then $\Lambda$ is finite.
\end{enumerate}
\end{lem}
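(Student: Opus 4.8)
First I would establish the trichotomy in (1) from the structure theory of algebraic subgroups of $\GL_2$. Write $G=\overline{\Lambda}$ for the Zariski closure and $G^0$ for its identity component. Since $\SL_2$ acts transitively on $\P^1$, the only non-empty closed invariant subset is $\P^1$ itself; hence if $G\supseteq\SL_2$ then $\Lambda$ has no finite orbit, and $\Lambda$ is not virtually solvable (as $\SL_2$ is not). This yields $(\text{finite orbit})\Rightarrow (G\not\supseteq\SL_2)$ and $(\text{virtually solvable})\Rightarrow (G\not\supseteq\SL_2)$. Conversely, the connected subgroups of $\PGL_2$ are the trivial group, the additive group, a one-dimensional torus, a Borel subgroup, and $\PGL_2$ itself, and only the last is non-solvable. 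So if $G\not\supseteq\SL_2$ the image of $G^0$ in $\PGL_2$ is a proper, hence solvable, connected subgroup (were it all of $\PGL_2$, then using $[\GL_2,\GL_2]=\SL_2$ we would get $\SL_2\subseteq G^0$); as $G^0$ is then an extension of a solvable group by a central subgroup it is solvable, and since $G^0$ has finite index in $G$ the group $\Lambda$ is virtually solvable. Finally a connected solvable group acting on the complete variety $\P^1$ fixes a point by the Borel fixed point theorem (Lie--Kolchin), so $G^0$ fixes some $p$ and the finite set $G\cdot p$ (one point per component of $G$) is $\Lambda$-invariant. This closes the cycle and proves (1).

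For (2), observe that preserving a non-empty finite set is the same as having a finite orbit, so by (1) the group $\Lambda$ is virtually solvable. Let $F\subseteq\P^1$ be a finite invariant set and let $\Lambda_0$ be the kernel of the permutation action $\Lambda\to\Bij(F)$; it has finite index, hence has Property (FA) by hypothesis, and it fixes each point of $F$. Fixing a point of $\P^1$ means stabilising a line of $\bfk^2$, so in a suitable basis $\Lambda_0$ lies in the Borel subgroup of upper-triangular matrices. The assignment $\left(\begin{smallmatrix} a & b\\ 0 & d\end{smallmatrix}\right)\mapsto(a,d)$ is then a homomorphism $\delta\colon\Lambda_0\to\bfk^*\times\bfk^*$ whose kernel $N=\Lambda_0\cap U$ lies in the unipotent group $U\cong(\bfk,+)$ and whose image $M$ is abelian.

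The heart of the matter is to prove two sublemmas: \emph{no infinite subgroup of $\bfk^*$, and no infinite subgroup of $(\bfk,+)$, has Property (FA)}. Granting them, the two coordinate projections of $M$ into $\bfk^*$ are quotients of $\Lambda_0$, hence have Property (FA), hence are finite; so $M$ is finite, $N$ has finite index in $\Lambda_0$, and therefore $N$ itself has Property (FA) by the hypothesis on finite-index subgroups. The additive sublemma then forces $N$ finite, so $\Lambda_0$ is finite and $\Lambda$ is finite. To prove the sublemmas I would use Serre's criterion: a group with Property (FA) admits no surjection onto $\Z$ and is not the union of a strictly increasing sequence of proper subgroups. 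An infinite \emph{finitely generated} subgroup of $\bfk^*$ is isomorphic to $\Z^r\times(\text{finite cyclic})$ with $r\ge 1$, and in characteristic $0$ an infinite finitely generated subgroup of $(\bfk,+)$ is free abelian of positive rank (in positive characteristic the additive group is torsion, so none exists); both surject onto $\Z$. A non-finitely-generated countable subgroup is visibly a strictly increasing countable union of its finitely generated proper subgroups. The remaining infinite-rank and uncountable cases are handled uniformly: the torsion of a subgroup of $\bfk^*$ is countable (the roots of unity form a countable group) and a torsion-free abelian group of finite rank is countable, so an uncountable subgroup of $\bfk^*$ or $(\bfk,+)$ produces a torsion-free abelian group of infinite rank; choosing a maximal $\Q$-independent family, partitioning it into countably many infinite blocks, and (if necessary) pulling the resulting decomposition back through the countable torsion subgroup exhibits the group as a strictly increasing countable union of proper subgroups. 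The same partition of a basis into countably many infinite blocks handles an infinite-dimensional $\mathbb{F}_p$-vector space in the additive positive-characteristic case.

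The main obstacle is exactly this last point. Property (FA) is genuinely weaker than ``finitely generated with finite abelianisation'', so the naive reduction fails for infinitely generated and for uncountable groups (for instance $(\mathbf{C},+)$, or divisible subgroups of $\bfk^*$), and one must exhibit an explicit strictly increasing countable chain of proper subgroups in each such group rather than merely invoke non-finite-generation. A secondary subtlety, to be flagged rather than fought, is that Property (FA) is not inherited by finite-index subgroups; this is precisely why the statement hypothesises (FA) for \emph{all} finite-index subgroups of $\Lambda$, a strengthening that is used twice above, once to pass to $\Lambda_0$ and once to conclude for $N$.
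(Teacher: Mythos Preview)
Your proof is correct and, in fact, more careful than what the paper supplies: the paper simply says that (1) is ``standard and omitted'' and that (2) ``follows directly from the first one.'' Your argument for (1) is the standard one via the classification of connected algebraic subgroups of $\PGL_2$ together with the Borel fixed point theorem, which is presumably what the authors had in mind.

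For (2) your route---pass to the pointwise stabiliser $\Lambda_0$ of the finite set, embed it in a Borel, and kill first the diagonal quotient in $\bfk^*\times\bfk^*$ and then the unipotent part---is essentially forced, and your two sublemmas (no infinite subgroup of $\bfk^*$ or of $(\bfk,+)$ has (FA)) are correctly proved. The authors, having cited Serre's result that a \emph{countable} (FA) group is finitely generated in the very next theorem, likely had the shorter version in mind: $\Lambda_0$ has (FA), hence is finitely generated with finite abelianisation, so its image in $\bfk^*\times\bfk^*$ is finite; then $N=\Lambda_0\cap U$ has finite index, inherits (FA), and by the same token is finite. Your treatment of the uncountable case (via a countable chain of proper subgroups built from a partition of a maximal independent set) is a genuine addition, since the lemma as stated does not assume $\Lambda$ countable; the paper's one-line justification tacitly relies on countability through Serre's theorem.

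One small remark: your phrasing ``the remaining infinite-rank and uncountable cases'' is slightly redundant, since countable non-finitely-generated groups are already handled in your previous sentence; the only truly new case in the last paragraph is the uncountable one, and your pullback of the chain through the (countable) torsion subgroup is exactly the right move there.
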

The proof of the first assertion is standard and omitted. The second assertion follows directly from the first one. 

In what follows, we denote by $\oZ\subset \overline{\Q}$ the ring of algebraic integers (in some fixed algebraic closure $\overline{\Q}$ of $\Q$).
 
\begin{thm}[Bass \cite{Bass:Pacific}]\label{thm:Bass}
Let $\bfk$ be an algebraically closed field.
\begin{enumerate}
\item If $\bfk$ has positive characteristic, then $\GL_2(\bfk)$ has no infinite subgroup with Property~(FA).
\item Suppose that $\bfk$ has characteristic zero and that $\Gamma\subset\GL_2(\bfk)$ is a countable subgroup with Property (FA), and is not virtually abelian. Then $\Gamma$ acts irreducibly on $\bfk^2$, and is conjugate to a subgroup of $\GL_2(\oZ)$. If moreover $\Gamma\subset\GL_2(K)$ for some subfield $K\subset\bfk$ containing $\overline{\Q}$, then we can choose the conjugating matrix to belong to $\GL_2(K)$.
\end{enumerate}
\end{thm}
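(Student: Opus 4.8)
The plan is to run both assertions through Serre's theory of groups acting on trees, the main tool being the Bruhat--Tits tree of $\SL_2$ over a complete discretely valued field. I will use two standard consequences of Property~(FA) for a countable group $\Gamma$: it is finitely generated, and it has no surjection onto $\Z$, so its abelianization $\Gamma^{\mathrm{ab}}$ is finite. The key mechanism is the following. Let $F\subset\bfk$ be the finitely generated subfield generated by the entries of a finite generating set, so $\Gamma\subset\GL_2(F)$, and let $v$ be any discrete valuation on a finite extension $F'$ of $F$ whose completion $F'_v$ is complete and discretely valued. The group $\GL_2(F'_v)$ acts through $\PGL_2(F'_v)$ on the Bruhat--Tits tree $T_v$; after a barycentric subdivision to avoid inversions, Property~(FA) yields a fixed vertex, so $\Gamma$ is conjugate in $\GL_2(F'_v)$ into a stabilizer $\GL_2(\OO_v)\cdot Z$, where $\OO_v$ is the valuation ring and $Z$ the centre. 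Since traces and eigenvalues are conjugacy invariants, this forces $v(\tr(\gamma))\ge 0$ and $v(\lambda)=0$ for every eigenvalue $\lambda$ of every $\gamma\in\Gamma$, and this conclusion holds simultaneously for all $v$ even though the conjugating matrix depends on $v$.

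For assertion~(1), suppose $\Gamma\subset\GL_2(\bfk)$ is infinite with Property~(FA), hence finitely generated, and let $F\subset\bfk$ be as above, of characteristic $p>0$. If $F$ were algebraic over $\F_p$ it would be finite and $\Gamma$ finite, contrary to hypothesis; so $F$ has positive transcendence degree. Fix $\gamma\in\Gamma$ and an eigenvalue $\lambda$ in a quadratic extension $F'$ of $F$. The mechanism gives $v(\lambda)=0$ for every discrete valuation $v$ of $F'$; since an element of a finitely generated field of positive transcendence degree over $\F_p$ with trivial valuation at every place must be algebraic over $\F_p$ (a function with no zeros or poles is constant), we get $\lambda\in\overline{\F_p}\cap F'$, a finite field, so $\lambda$ is a root of unity. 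Thus every $\gamma$ has finite order, and a finitely generated torsion subgroup of $\GL_2(\bfk)$ is finite by the Burnside--Schur theorem, contradicting infinitude.

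For assertion~(2), I first prove irreducibility. If $\Gamma$ preserved a line it would be conjugate into the upper triangular group $B$; the projection $B\to(\bfk^\times)^2$ onto the diagonal torus is a homomorphism to an abelian group, hence factors through the finite group $\Gamma^{\mathrm{ab}}$ and has finite image, while its kernel in $\Gamma$ lands in the abelian unipotent subgroup. Then $\Gamma$ would be virtually abelian, contrary to hypothesis, so $\Gamma$ is irreducible, hence absolutely irreducible as $\bfk$ is algebraically closed. Let $K_0=\Q(\tr(\gamma):\gamma\in\Gamma)$ be the trace field. Applying the mechanism to discrete valuations of $K_0$ that are trivial on $\Q$ gives $v(\tr(\gamma))\ge 0$; since a transcendental element of a finitely generated extension of $\Q$ has a pole at some such place, every trace is algebraic over $\Q$ and $K_0$ is a number field. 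Applying the mechanism to the $p$-adic places of $K_0$ then yields $\tr(\gamma)\in\OO_v$ at every finite place, so $\tr(\Gamma)\subset\oZ$.

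It remains to pass from integral traces to an integral conjugate, following Bass \cite{Bass:Pacific}. By absolute irreducibility and Burnside, $\Gamma$ spans $\Mat_2(\bfk)$, so its character determines it up to conjugacy (Noether--Deuring); realizing the representation over the number field $K_1$ obtained by adjoining to $K_0$ a splitting field of the associated quaternion algebra (a quadratic extension of $K_0$, still inside $\overline{\Q}$), the integrality $\tr(\Gamma)\subset\oZ$ makes the $\OO_{K_1}$-order generated by $\Gamma$ a finitely generated module, which preserves a full $\OO_{K_1}$-lattice $L\subset K_1^2$; in a basis of $L$ one has $\Gamma\subset\GL_2(\OO_{K_1})\subset\GL_2(\oZ)$. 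When $\Gamma\subset\GL_2(K)$ with $\overline{\Q}\subset K$, all of $K_0$, $K_1$, the splitting, and $L$ lie inside $\overline{\Q}\subset K$, and Noether--Deuring over $K$ allows the conjugating matrix to be chosen in $\GL_2(K)$. The main obstacle is exactly this final step: controlling the quaternion (non-)splitting obstruction to descend the field of definition to a number field, and manufacturing a single $\Gamma$-invariant integral lattice out of the merely place-by-place integrality produced by Property~(FA) --- whereas in part~(1) and in the number-field conclusion the conjugacy invariance of traces and eigenvalues rendered the per-place conjugations harmless.
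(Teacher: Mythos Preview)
Your outline follows Bass's original argument (Bruhat--Tits trees, integrality of traces, then an order/lattice construction) rather than the paper's approach, which simply cites Bass for everything except the final clause and then handles that clause by an elementary normalizer computation (Lemma~\ref{algl}): if $B\in\GL_2(\bfk)$ satisfies $B\Mat_2(K)B^{-1}\subset\Mat_2(K)$ then $B\in\bfk^*\GL_2(K)$. Your alternative for that final clause, via Noether--Deuring/Skolem--Noether, also works: the two $K$-representations $\gamma\mapsto\gamma$ and $\gamma\mapsto B\gamma B^{-1}$ are absolutely irreducible with the same character, hence conjugate over $K$, and the intertwiner differs from $B$ by a scalar. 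The paper's lemma is more self-contained; your argument is softer but imports a standard theorem.

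There is, however, a genuine gap in your ``mechanism''. You correctly note that a fixed vertex lands $\Gamma$ (after conjugation) in $\GL_2(\OO_v)\cdot Z$, but from this you cannot conclude $v(\tr\gamma)\ge 0$ or $v(\lambda)=0$: if $\gamma=zM$ with $z$ a scalar and $M\in\GL_2(\OO_v)$, then $\tr\gamma=z\,\tr M$ and the eigenvalues are $z$ times $v$-units, while $v(z)$ is uncontrolled. What is invariant is $\tr(\gamma)^2/\det(\gamma)\in\OO_v$ and that the ratio of the two eigenvalues is a $v$-unit. The fix is to use the fact you already recorded, that $\Gamma^{\mathrm{ab}}$ is finite: then $\det(\Gamma)$ is a finite group of roots of unity, so $v(\det\gamma)=0$, whence $v(z)=0$ and your claims follow. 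Without inserting this step, both parts (1) and (2) break at exactly the point where you pass from ``bounded in the tree'' to ``integral traces/unit eigenvalues''. Once this is repaired, your sketch of (1) and of the integrality and lattice construction in (2) is sound.
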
 
\begin{proof}[On the proof]

According to  \cite[\S 6, Th.~15]{Serre:AASL2}, a countable group with Property (FA) is finitely generated. 
Thus, if $\Gamma\subset \GL_2(\bfk)$ has Property (FA) it is contained in $\GL_2(K)$ for some finitely generated field $K\subset \bfk$ (choose $K$ to be the field generated by entries of a finite generating subset of $\Gamma$). 
Then, the first statement follows from Corollary 6.6 of \cite{Bass:Pacific}. 

Now, assume that the characteristic of $\bfk$ is $0$. Since a group with Property (FA) has no infinite cyclic quotient,
and is not a non-trivial amalgam, Theorem~6.5 of \cite{Bass:Pacific} can  be applied, giving the first assertion of (2) (see also the first Theorem in \cite{Bass:Smith}).
For the last assertion, we have $\Gamma\cup B\Gamma B^{-1}\subset \GL_2(K)$ for some  $B\in\GL_2(\bfk)$ such that $B\Gamma B^{-1}\subset
\GL_2(\oZ)$; we claim that this implies that $B\in \bfk^*\GL_2(K)$. First, since $\Gamma$ is absolutely irreducible, this implies that $B\mathcal{M}_2(K)B^{-1}\subset \mathcal{M}_2(K)$. The conclusion follows from Lemma \ref{algl} below, which can be of independent interest.
\end{proof} 

\begin{lem}\label{algl}
Let $K\subset L$ be fields. Then the normalizer $\{B\in\GL_2(L):B\mathcal{M}_2(K)B^{-1}\subset \mathcal{M}_2(K)\}$ is reduced to $L^*\GL_2(K)=\{\lambda A:\lambda\in L^*,A\in\GL_2(K)\}$.
\end{lem}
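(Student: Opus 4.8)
The inclusion $L^*\GL_2(K)\subseteq\{B\in\GL_2(L):B\mathcal{M}_2(K)B^{-1}\subset\mathcal{M}_2(K)\}$ is immediate: scalars are central and $\GL_2(K)$ plainly normalizes $\mathcal{M}_2(K)$. So the content is the reverse inclusion, and the plan is to exploit that conjugation by a normalizing $B$ restricts to a $K$-algebra automorphism of $\mathcal{M}_2(K)$. First I would observe that if $B\mathcal{M}_2(K)B^{-1}\subset\mathcal{M}_2(K)$, then the conjugation map $\phi_B\colon M\mapsto BMB^{-1}$ sends the $4$-dimensional $K$-vector space $\mathcal{M}_2(K)$ into itself, $K$-linearly and injectively, hence bijectively; thus $\phi_B$ restricts to a unital $K$-algebra automorphism of $\mathcal{M}_2(K)$.

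The slick route then invokes the Skolem--Noether theorem: every automorphism of the central simple $K$-algebra $\mathcal{M}_2(K)$ is inner, so there is $A\in\GL_2(K)$ with $BMB^{-1}=AMA^{-1}$ for all $M\in\mathcal{M}_2(K)$. Consequently $A^{-1}B$ centralizes $\mathcal{M}_2(K)$ inside $\mathcal{M}_2(L)$. Since the elementary matrices $E_{ij}$ already lie in $\mathcal{M}_2(K)$ and form an $L$-basis of $\mathcal{M}_2(L)$, anything commuting with all of them is central in $\mathcal{M}_2(L)$, i.e.\ a scalar. Hence $A^{-1}B=\lambda\,\Id$ with $\lambda\in L^*$, giving $B=\lambda A\in L^*\GL_2(K)$ as desired.

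If one prefers a self-contained argument avoiding Skolem--Noether, the same conclusion follows by direct computation, which I would carry out as follows. Write $B=\begin{pmatrix} a & b\\ c & d\end{pmatrix}$ and $\Delta=\det B\in L^*$. Conjugating the three elementary matrices $E_{11}$, $E_{12}$, $E_{21}$ and requiring each image to have entries in $K$ forces every pairwise product $xy/\Delta$, with $x,y\in\{a,b,c,d\}$, to lie in $K$. Then, choosing any nonzero entry $x$ and setting $\lambda=\Delta/x\in L^*$, one checks entrywise that $B/\lambda\in\mathcal{M}_2(K)$, while $\det(B/\lambda)=x^2/\Delta\in K^*$; hence $A:=B/\lambda$ belongs to $\GL_2(K)$ and $B=\lambda A$.

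Neither route is genuinely hard, the lemma being elementary. In the conceptual approach the only point requiring care is the centralizer computation (equivalently, that $\phi_B$ restricts to a \emph{surjective} automorphism, which uses finite-dimensionality of $\mathcal{M}_2(K)$ over $K$); in the computational approach the analogous main point is recognizing that a single scalar $\lambda=\Delta/x$ simultaneously clears all four entries into $K$, so that no case analysis on the vanishing pattern of $a,b,c,d$ is actually needed beyond selecting one nonzero entry.
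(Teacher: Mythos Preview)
Your proposal is correct. Your second, computational, argument is essentially the paper's own proof: both conjugate the elementary matrices $E_{11},E_{12},E_{21}$ and read off that all products $b_ib_j/\Delta$ lie in $K$; the paper then passes to the ratios $b_i/b_j=b_ib_j/b_j^2\in K$ and divides $B$ by a nonzero entry, whereas you divide directly by $\lambda=\Delta/x$, which is slightly cleaner but amounts to the same thing.

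Your first argument via Skolem--Noether is a genuinely different route. The paper stays at the level of explicit $2\times 2$ entry computations, while you observe that $\phi_B$ restricts to a $K$-algebra automorphism of the central simple algebra $\mathcal{M}_2(K)$ and invoke Skolem--Noether to produce $A\in\GL_2(K)$ with $\phi_A=\phi_B$, then finish with the centralizer computation. This conceptual approach has the advantage of generalizing immediately to $\mathcal{M}_n$ (and indeed to any central simple $K$-algebra), and it explains \emph{why} the lemma holds rather than verifying it by hand; the paper's direct computation, on the other hand, is entirely self-contained and avoids appealing to a named theorem for such an elementary statement.
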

\begin{proof}
Write 
\[
B=\begin{pmatrix}b_1 & b_2\\ b_3 & b_4\end{pmatrix}.
\] 
Since $BAB^{-1}\in \mathcal{M}_2(K)$ for the three elementary matrices $A\in\{E_{11},E_{12},E_{21}\}$, we deduce by a plain computation that $b_ib_j/b_kb_\ell\in K$ for all $1\le i$, $j$, $k$, $\ell\le 4$ such that $b_kb_\ell\neq 0$. In particular, for all indices $i$ and $j$ such that $b_i$ and $b_j$ are nonzero, the quotient $b_i/b_j=b_ib_j/b_j^2$ belongs to $K$. It follows that $B\in L^*\GL_2(K)$.
\end{proof} 
 
 \begin{cor}\label{coro:Bass-k(C)}
 Let $\bfk$ be an algebraically closed field. Let $C$ be a projective curve over $\bfk$, and let $\bfk(C)$ be
 the field of rational functions on the curve $C$. Let $\Gamma$ be an infinite subgroup of $\PGL_2(\bfk(C))$.
 If $\Gamma$ has Property (FA), then
 \begin{enumerate}
 \item the field $\bfk$ has characteristic $0$; 
 \item there is an element of $\PGL_2(\bfk(C))$ that conjugates $\Gamma$ 
 to a subgroup of $\PGL_2(\oZ)\subset \PGL_2(\bfk(C))$.\qed
 \end{enumerate}
 \end{cor}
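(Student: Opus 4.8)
The plan is to deduce both assertions from Bass's theorem (Theorem~\ref{thm:Bass}) applied over the algebraic closure of $\bfk(C)$, and then to descend the resulting conjugacy back to $\bfk(C)$ itself. Write $F=\bfk(C)$ and let $\overline F$ be an algebraic closure. Since $\overline F$ is algebraically closed, the map $\SL_2(\overline F)\to\PGL_2(\overline F)$ is onto with central kernel of order $\le 2$; pulling $\Gamma$ back along it produces $\tilde\Gamma\subset\SL_2(\overline F)$ fitting in a central extension $1\to C\to\tilde\Gamma\to\Gamma\to 1$ with $C$ finite. Property (FA) passes to such extensions: if $\tilde\Gamma$ acts on a tree, the finite group $C$ fixes a non-empty subtree, on which the quotient $\Gamma$ acts and fixes a point by its own (FA). Hence $\tilde\Gamma$ has (FA), and it is infinite because it surjects onto $\Gamma$. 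If $\bfk$ had positive characteristic, so would $\overline F$, and Theorem~\ref{thm:Bass}(1) would forbid an infinite subgroup of $\GL_2(\overline F)$ with (FA); this contradiction proves assertion~(1).

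Now $\bfk$ has characteristic $0$, so $\oZ\subset\overline{\Q}\subset\bfk\subset F$. First I would check that $\tilde\Gamma$ is not virtually abelian: an infinite virtually abelian group surjects onto $\Z$ or onto the infinite dihedral group, and hence fails (FA), so neither $\Gamma$ nor $\tilde\Gamma$ is virtually abelian. A group with (FA) is finitely generated (Serre), hence countable, so Theorem~\ref{thm:Bass}(2) applies: $\tilde\Gamma$ acts irreducibly on $\overline F^2$ and is conjugate, by some $B\in\GL_2(\overline F)$, into $\GL_2(\oZ)$. An irreducible, infinite, non-virtually-abelian subgroup of $\SL_2(\overline F)$ is Zariski dense (the only irreducible algebraic subgroups are $\SL_2$, the virtually abelian normalizers of a maximal torus, and finite groups), so $\Gamma$ is Zariski dense in $\PGL_2$. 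Projecting $B$ to $\bar B\in\PGL_2(\overline F)$ gives $\Gamma':=\bar B\Gamma\bar B^{-1}\subset\PGL_2(\oZ)$.

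It remains to replace $\bar B\in\PGL_2(\overline F)$ by a conjugator defined over $F$; this is the crux, and Bass's refinement on the field of definition does not help, since lifting to $\SL_2$ introduces square roots of determinants and only controls a field possibly larger than $F$. I would descend through the adjoint embedding $\mathrm{Ad}\colon\PGL_2\hookrightarrow\GL(\sll_2)\cong\GL_3$, which is faithful and defined over the prime field. It turns the inclusion $\Gamma\hookrightarrow\PGL_2(F)$ and the map $\gamma\mapsto\bar B\gamma\bar B^{-1}\in\PGL_2(F)$ into two three-dimensional representations $\rho,\rho'\colon\Gamma\to\GL_3(F)$ that are conjugate over $\overline F$ by $\mathrm{Ad}(\bar B)$. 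Because $\Gamma$ is Zariski dense, $\mathrm{Ad}(\Gamma)$ is Zariski dense in the adjoint group $\mathrm{Ad}(\PGL_2)$, which acts absolutely irreducibly on $\sll_2$; thus $\rho,\rho'$ are absolutely irreducible. By the Noether--Deuring theorem (equivalently, Schur's lemma makes $\Hom_\Gamma(\rho,\rho')$ a line over $F$ whose extension of scalars is $\overline F\cdot\mathrm{Ad}(\bar B)$) there is $Q\in\GL_3(F)$ with $Q\rho Q^{-1}=\rho'$. Conjugation by $Q$ carries the Zariski-dense $\mathrm{Ad}(\Gamma)$ to $\mathrm{Ad}(\Gamma')$, hence $Q$ normalizes the algebraic group $\mathrm{Ad}(\PGL_2)$; as this normalizer modulo scalars is $\mathrm{Ad}(\PGL_2)$ itself, the automorphism ``conjugation by $Q$'' is realized by $\mathrm{Ad}(\bar B_0)$ for some $\bar B_0\in\PGL_2(F)$. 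Faithfulness of $\mathrm{Ad}$ then gives $\bar B_0\Gamma\bar B_0^{-1}=\Gamma'\subset\PGL_2(\oZ)$ with $\bar B_0\in\PGL_2(\bfk(C))$, which is assertion~(2). The main obstacle is precisely this last descent: the two earlier steps are bookkeeping around Theorem~\ref{thm:Bass}, whereas transporting the conjugacy from $\overline F$ to $\bfk(C)$ relies on the faithful linear adjoint model, absolute irreducibility, and the computation that the normalizer of $\mathrm{Ad}(\PGL_2)$ in $\GL_3$ is scalars times $\mathrm{Ad}(\PGL_2)$.
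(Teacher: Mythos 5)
Your argument for assertion (1) (lifting to $\SL_2(\overline{F})$ and quoting Theorem~\ref{thm:Bass}(1)) is correct, and your adjoint-representation descent at the end is also correct granted Zariski density; it is a legitimate substitute for the device the paper builds into the last assertion of Theorem~\ref{thm:Bass} (absolute irreducibility makes the group span $\mathcal{M}_2$, and Lemma~\ref{algl} identifies the normalizer), and you are right that some such argument is needed, since the $\SL_2$-lift only lives over a finite extension of $\bfk(C)$ obtained by adjoining square roots. The genuine gap is the step where you dismiss the virtually abelian case: the claim that an infinite virtually abelian group surjects onto $\Z$ or onto the infinite dihedral group, hence fails (FA), is false. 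Consider
\[
\Gamma_0=\left\{z\mapsto i^kz+\beta \;:\; k\in\Z/4\Z,\ \beta\in\Z[i]\right\}\;\cong\;\Z^2\rtimes\Z/4\Z,
\]
embedded in $\PGL_2(\bfk(C))$ via $z\mapsto i^kz+\beta\,\leftrightarrow\,\begin{pmatrix} i^k & \beta\\ 0 & 1\end{pmatrix}$ (recall $\overline{\Q}\subset\bfk$ once the characteristic is $0$). This is the wallpaper group of type p4: infinite and virtually abelian. It is generated by $a\colon z\mapsto iz$ and $b\colon z\mapsto iz+1$, and every element whose linear part is $\neq 1$ has finite order (the geometric series $1+i^k+\cdots+i^{k(n-1)}$ vanishes), so $a$, $b$, $a^2$, $b^2$, $ab$, $ba$ all have finite order. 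In any action on a tree these elements are elliptic, and Serre's lemma (\cite{Serre:AASL2}, \S 6.5: if every element of $S\cup S\cdot S$ is elliptic for a finite generating set $S$, the group fixes a point) shows that $\Gamma_0$ has Property (FA). Since (FA) passes to quotients and both $\Z$ and $D_\infty$ fail it, $\Gamma_0$ surjects onto neither, contradicting your dichotomy. So the hypothesis ``not virtually abelian'' of Theorem~\ref{thm:Bass}(2) has not been verified, and your proof of assertion (2) gives nothing for such subgroups.

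This gap cannot be closed by a better argument of the same kind, because assertion (2) is actually false for virtually abelian groups with (FA): take $N=\Z[i]+\Z[i]e$ with $e\in\bfk$ transcendental over $\Q$, and $\Gamma_1=\{z\mapsto i^kz+\beta:\ k\in\Z/4\Z,\ \beta\in N\}$. The same Serre-lemma argument gives (FA), but no element of $\PGL_2(\overline{\bfk(C)})$ conjugates $\Gamma_1$ into $\PGL_2(\oZ)$: the translations $z\mapsto z+1$ and $z\mapsto z+e$ would be sent to commuting parabolic elements of $\PGL_2(\oZ)$; moving their common fixed point to $\infty$ by an element of $\PGL_2(\overline{\Q})$, the total conjugation fixes $\infty$, hence rescales all translation parts by one scalar $a$, and $a\cdot 1\in\overline{\Q}$, $a\cdot e\in\overline{\Q}$ force $e\in\overline{\Q}$, a contradiction. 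So the statement, taken with (FA) alone, requires the virtually abelian case to be excluded by hypothesis: either keep the ``not virtually abelian'' assumption of Theorem~\ref{thm:Bass}(2), or assume that every finite-index subgroup of $\Gamma$ has (FA) --- e.g.\ that $\Gamma$ has Property~{\FW}, which is the situation in every application of this corollary in the paper --- since then an infinite virtually abelian $\Gamma$ would contain a finite-index copy of $\Z^n$ with (FA), which is absurd. A correct write-up should make this reduction explicit instead of invoking the false dichotomy; note that your assertion (1) is unaffected, as that part of your proof never uses virtual abelianness.
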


\section{Divisorial valuations, hypersurfaces, and the action of $\Bir(X)$}\label{par:div-hyp-action}

The group of birational transformations $\Bir(X)$ acts on the function field $\bfk(X)$, hence
also on the set of valuations of $\bfk(X)$. The subset of divisorial 
valuations is invariant, and the centers of those valuations correspond to irreducible
hypersurfaces in various models of $X$.  In this way, we obtain a natural action of $\Bir(X)$ on (reduced, irreducible) hypersurfaces
in all models of $X$; this section presents this classical construction (we refer to \cite{ZS}, chapter VI, and \cite{Vaquie}
for detailed references). 

\subsection{Divisorial valuations}
Consider a  projective variety $X$ over an algebraically closed field $\bfk$ and let $\bfk(X)$ 
be its function field. A discrete, rank $1$, valuation $v$ on $\bfk(X)$ is a function on the multiplicative
group $\bfk(X)^*$ with values in the cyclic group $\Z$ such that 
\begin{itemize}
\item[(i)] $v(\varphi \psi)=v(\varphi)+v(\psi)$ and
$v(\varphi+\psi)\geq \min(v(\varphi), v(\psi))$, $\forall \varphi, \; \psi \in \bfk(X)^*$, 
\item[(ii)] $v$ vanishes on the set of constant functions $\bfk\subset \bfk(X)$, 
\item[(iii)]  $v(\bfk(X))=\Z$ (we assume that the value group is equal to $\Z$ in this article).
\end{itemize}
Its valuation ring is the subring $R_v\subset \bfk(X)$ defined by $R_v=v^{-1}(\Z_+)$, 
where $\Z_+$ is the set of non-negative integers. This ring contains a unique maximal 
ideal, namely  $\m_v=v^{-1}(\Z_+^*)$, where $\Z_+^*$ is the set of positive integers. 
The residue field is the quotient field $\bfk(X)_v= R_v/\m_v$; 
if its transcendence degree is equal to $\dim(X)-1$, then $v$ is said to be a {\bf{divisorial 
valuation}} (see~\cite{ZS}, \S VI.14, \cite{Vaquie}, \S 10). We shall denote by $\dv(X)$ the set of divisorial valuations on $\bfk(X)$.

Any birational map $f\colon X\dasharrow X'$ determines an isomorphism of function 
fields and transports divisorial valuations to divisorial valuations:  if 
 $v$ is a divisorial valuation on $\bfk(X)$, then $f(v)(\varphi):=v(\varphi\circ f)$ defines a divisorial 
 valuation on $\bfk(X')$. Indeed, the group of values is not modified by this action, and the residue fields $\bfk(X)_v$ 
and $\bfk(X)_{f(v)}$ are isomorphic. In this way, $\Bir(X)$ acts on $\dv(X)$.

\subsection{Hypersurfaces}

We now work with normal and projective varieties; we shall use that their singular loci, and
the indeterminacy loci of birational maps have codimension $\geq 2$ (in particular, the strict
transform of any hypersurface is well defined). 

Let $\pi\colon Y\to X$ be a birational morphism  between normal projective varieties. 
Let $E$ be a reduced, irreducible, hypersurface in $Y$. Since $Y$ is normal, it is
smooth at the generic point of $E$; thus, if $\varphi$ is an element of $\bfk(X)^*$, we can 
define the order of vanishing $v_E(\varphi)$ of $\varphi$ along $E$: $v_E(\varphi)=a\geq 0$ if $\varphi\circ \pi$ vanishes
at order $a$ along $E$, and $v_E(\varphi)=-a$ if $\varphi\circ \pi$ has a pole of order $a$
along $E$. Then, $v_E$ is a divisorial valuation, with residue field isomorphic to
$\bfk(E)$. One says that $v_E$ is the {\bf{geometric valuation}} associated to $E$ (or more precisely to 
$(\pi, E)$).
A theorem of Zariski asserts that {\sl{every divisorial valuation is  geometric}} (see~\cite{ZS}, \S VI.14, or \cite{Vaquie}, \S 10). 
Thus, one may define 
the set $\Hyp(X)$ of {\bf{irreducible hypersurfaces}} in all (normal) models of $X$ as the set of divisorial 
valuations $\dv(X)$. Any reduced and irreducible hypersurface $E$ in any model $Y\to X$ determines
such a point $E\in \Hyp(X)$; two divisors $E$ and $E'$ in two models $\pi\colon Y\to X$
and $\pi'\colon Y'\to X$ correspond to the same point in $\Hyp(X)$ if and only if the two 
valuations $v_E$ and $v_{E'}$ coincide, if and only if $E$ is the strict transform of $E'$ by the birational map $\pi^{-1}\circ \pi'\colon Y'\dasharrow Y$. 
The action of $\Bir(X)$ on valuations becomes an action by permutations on  $\Hyp(X)$, which we
denote by 
\begin{equation}
f_\bullet\colon E\in \Hyp(X)\mapsto f_\bullet (E); 
\end{equation}
it satisfies $f(v_{f_\bullet(E)})=v_E$. If $E$ is a reduced and irreducible hypersurface in the model
$\pi\colon Y\to X$, there is a birational morphism $\pi'\colon Y'\to X$ such that 
$\pi'^{-1}\circ f\circ \pi$ does not contract $E$; then, the strict transform of $E$ by $\pi'^{-1}\circ f\circ \pi$
is a reduced, irreducible hypersurface $E'$ in $Y'$ that represents the 
point $f_\bullet(E)$ in $\Hyp(X)$. 

More generally, if $f\colon X\dasharrow X'$ is a birational map between normal projective varieties, 
we obtain a bijection $f_\bullet\colon \Hyp(X)\to \Hyp(X')$.

\subsection{The subset $\Hy(X)$}

Let $\Hy(X)\subset \Hyp(X)$ be the subset of all reduced, irreducible hypersurfaces of the normal variety $X$. 
Recall that a hypersurface is contracted by a birational map
if its strict transform is a subset of codimension $>1$.  Given a birational map $f:X\dasharrow X'$ 
between normal  projective varieties, define 
\[
\exc(f)=\# \left\{ S\in \Hy(X);\;\; f\; {\text{ contracts  }}\; S \right\}.
\]
This is the {\bf{number of contracted hypersurfaces}}  
$S\in \Hy(X)$ by $f$. In the following proposition,  $f_\circ$ denotes the strict transform
and  $f_\bullet$ the action on $\Hyp(X)$.  

\begin{pro}\label{pro:contraction-hyp-estimate}
Let $f:X\dasharrow X'$ be a birational transformation between normal irreducible projective varieties. 
Let $S$ be an element of $\Hy(X)$.
\begin{enumerate}
\item\label{ipascontra} If $S\in  (f^{-1})_\circ\Hy(X')$,  then $f_\bullet(S)=f_\circ(S)\in\Hy(X')$.
\item\label{icontra} If $S\notin (f^{-1})_\circ \Hy(X')$,  then $f_\circ(S)$ has codimension $\ge 2$ (i.e. $v$ contracts $S$),
and $f_\bullet(S)$ is an element of $\Hyp(X')\smallsetminus \Hy(X')$. 
\item\label{iconclusion} The symmetric difference $f_\bullet(\Hy(X))\triangle \Hy(X')$ contains $\exc(f)+\exc(f^{-1})$ elements.
\end{enumerate}
\end{pro}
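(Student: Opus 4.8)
The plan is to reduce everything to the case of a birational morphism, where Proposition~\ref{hypermo} already gives complete information, and then transport the conclusions along the categorical extension $v_\bullet$. Concretely, factor $v$ through the normalized graph: since $(\cxn,\cxn^\sh)$ is a good right-localization, we may write $v=g'\circ g^{-1}$ with $X\stackrel{g}\leftarrow\mathfrak{G}\stackrel{g'}\to X'$ two birational morphisms from a normal irreducible projective variety $\mathfrak{G}$. By functoriality of the extended colimit functor (Corollary~\ref{corlimf}), $v_\bullet=g'_\bullet\circ(g_\bullet)^{-1}$. Fix $S\in\Hy(X)$. Because $g$ is a birational morphism and $X$ is normal, Proposition~\ref{hypermo}(\ref{ihm3}) identifies $(g_\bullet)^{-1}$ on $\Hy(X)$ with the strict transform $g^\circ$; so $\tilde S:=(g_\bullet)^{-1}(S)=g^\circ(S)\in\Hy(\mathfrak{G})$ and $v_\bullet(S)=g'_\bullet(\tilde S)$. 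Since $g$ is an isomorphism off a subset of codimension $\ge 2$, the generic point of $\tilde S$ maps to that of $S$, whence $v_\circ(S)=g'_\circ(\tilde S)$ and $v$ contracts $S$ if and only if $g'$ contracts $\tilde S$.

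First I would record the dictionary between ``contracted'' and membership in $(v^{-1})_\circ\Hy(X')$: if $S$ is not contracted then $S'=v_\circ(S)\in\Hy(X')$ and, since both $v$ and $v^{-1}$ are local isomorphisms off indeterminacy sets of codimension $\ge 2$, $(v^{-1})_\circ(S')=S$, so $S\in(v^{-1})_\circ\Hy(X')$; conversely any $S=(v^{-1})_\circ(S')$ satisfies $v_\circ(S)=S'\in\Hy(X')$, hence is not contracted. With this, (1) and (2) follow from the morphism case applied to $g'$. When $S\in(v^{-1})_\circ\Hy(X')$, i.e. $S$ is not contracted, $\tilde S$ is not contracted by $g'$, hence lies in the image of $(g')^\circ$ and $g'_\bullet(\tilde S)=g'_\circ(\tilde S)\in\Hy(X')$ by Proposition~\ref{hypermo}(\ref{ihm3}); this gives $v_\bullet(S)=v_\circ(S)\in\Hy(X')$. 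When $S\notin(v^{-1})_\circ\Hy(X')$, the hypersurface $\tilde S$ is contracted by $g'$, so $v_\circ(S)=g'_\circ(\tilde S)$ has codimension $\ge 2$ and $\tilde S$ lies outside the image of $(g')^\circ$; as $(g'_\bullet)^{-1}(\Hy(X'))=(g')^\circ(\Hy(X'))$, this forces $g'_\bullet(\tilde S)\in\Hyp(X')\smallsetminus\Hy(X')$, i.e. $v_\bullet(S)\in\Hyp(X')\smallsetminus\Hy(X')$.

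For (3) I would split the symmetric difference into the two disjoint pieces $v_\bullet(\Hy(X))\smallsetminus\Hy(X')$ and $\Hy(X')\smallsetminus v_\bullet(\Hy(X))$ and count each using the bijectivity of $v_\bullet$. By (1)--(2), for $S\in\Hy(X)$ one has $v_\bullet(S)\notin\Hy(X')$ exactly when $S$ is contracted; since $v_\bullet$ is injective, the first piece is precisely the $v_\bullet$-image of the set of contracted hypersurfaces and hence has exactly $\exc(v)$ elements. For the second piece, bijectivity gives $\#(\Hy(X')\smallsetminus v_\bullet(\Hy(X)))=\#((v_\bullet)^{-1}(\Hy(X'))\smallsetminus\Hy(X))$, and $(v_\bullet)^{-1}=(v^{-1})_\bullet$ by functoriality, so this equals the first-piece count for the birational map $v^{-1}\colon X'\dasharrow X$, namely $\exc(v^{-1})$. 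The two pieces are disjoint, so the symmetric difference has $\exc(v)+\exc(v^{-1})$ elements. The main obstacle is the bookkeeping in the first paragraph---correctly matching $(g_\bullet)^{-1}|_{\Hy(X)}$ and $g'_\bullet|_{\Hy(\mathfrak{G})}$ with the strict transforms and the contraction locus through Proposition~\ref{hypermo}---after which (1)--(3) are essentially formal.
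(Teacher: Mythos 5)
Your overall route is the same as the paper's: factor $v=g'\circ g^{-1}$ through the normalized graph, transport everything along $v_\bullet=g'_\bullet\circ(g_\bullet)^{-1}$, reduce (1)--(2) to the morphism case of Proposition~\ref{hypermo}, and deduce (3) by the formal counting argument. Your first paragraph (the composition identities $v_\circ(S)=g'_\circ(\tilde S)$, ``$v$ contracts $S$ iff $g'$ contracts $\tilde S$'', via normality and the codimension-$2$ image of the exceptional locus of $g$) and your part (3) are correct. The problem is the ``dictionary'' paragraph, which is the actual content of the proposition, and both of your justifications for it fail. The forward direction rests on the claim that ``both $v$ and $v^{-1}$ are local isomorphisms off indeterminacy sets of codimension $\ge 2$.'' That claim is false for a general birational map: it is precisely condition (3) of Proposition~\ref{pro:pseudo-isomorphism}, i.e.\ the \emph{definition} of a pseudo-isomorphism. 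The standard quadratic involution of $\P^2$ is regular, but not a local isomorphism, at the general point of each of the three lines it contracts. Indeed, if your claim held for all birational maps between normal projective varieties, no hypersurface would ever be contracted, $\exc(v)$ would always be $0$, and the proposition would be vacuous; the claim contradicts the very phenomenon being measured. The reverse direction, $v_\circ\bigl((v^{-1})_\circ(S')\bigr)=S'$, is asserted with no justification; it is not a formal identity, since strict transforms do not compose functorially in general (that failure is again exactly what contraction is about).

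Both directions of the dictionary are true and can be proved inside your own setup; this is the step the paper's proof supplies and yours omits. The tool is the uniqueness statement of Proposition~\ref{hypermo}(3) applied on the graph, together with the symmetric form of your composition identity, $(v^{-1})_\circ(S')=g_\circ\bigl((g')^\circ(S')\bigr)$ (same argument as your first paragraph, using normality of $X'$). If $\tilde S=g^\circ(S)$ is not contracted by $g'$, then $S':=g'(\tilde S)$ is a hypersurface and, by uniqueness of the hypersurface of the graph mapping onto $S'$, $\tilde S=(g')^\circ(S')$; hence $(v^{-1})_\circ(S')=g\bigl((g')^\circ(S')\bigr)=g(\tilde S)=S$, which is ``not contracted $\Rightarrow S\in(v^{-1})_\circ\Hy(X')$'' (in contrapositive form, this is exactly the contradiction the paper uses to prove assertion (2)). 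Conversely, if $S=(v^{-1})_\circ(S')=g_\circ\bigl((g')^\circ(S')\bigr)$ is a hypersurface, then $(g')^\circ(S')$ is not contracted by $g$, so uniqueness applied to $g$ gives $(g')^\circ(S')=g^\circ(S)=\tilde S$, whence $v_\circ(S)=g'\bigl((g')^\circ(S')\bigr)=S'$. With these two arguments substituted for the assertions in your second paragraph, your proof is complete and coincides with the paper's.
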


\begin{proof}
Let $U$ be the  complement of $\Ind(f)$ in $X'$. Since, by Theorem \ref{normal2}, $\Ind(f)$ has codimension $\ge 2$, no 
$S\in\Hy(X)$ is contained in $\Ind(f)$.
Let us prove (\ref{ipascontra}). This is clear when $f$ is  a birational morphism. 
To deal with the general case, write $f=g\circ h^{-1}$ where  
$h\colon Y\to X$ and $g\colon Y\to X'$ are birational morphisms from a normal variety $Y$.
Since $h$ is a birational morphism, $h^\bullet(S)=h^\circ(S)\subset\Hy(Y)$; since $S$ is not contracted by $f$, 
$g_\bullet (h^\circ(S)) = g_\circ(h^\circ(S)) \in \Hy(X')$. 
Thus,  $f_\bullet(S)=g_\bullet (h^\bullet(S))$ coincides with the strict transform $f_\circ(S)\in\Hy(X')$. 

Now let us prove (\ref{icontra}), assuming thus that $S\notin (f^{-1})_\circ\Hy(X')$. 
Let $S''\in\Hy(Y)$ be the hypersurface $(h^{-1})_\bullet(S)=(h^{-1})_\circ(S)$. Then $h(S'')=S$. If $g_\circ (S'')$ is a hypersurface $S'$, 
then $(f^{-1})_\circ (S')=S$, contradicting $S\notin (f^{-1})_\circ\Hy(X')$. 
Thus, $g$ contracts $S''$ onto a subset $S'\subset X'$ of codimension $\geq 2$. 
Since $S'=f_\circ(S)$, assertion  (\ref{icontra}) is proved. 

Assertion~(\ref{iconclusion}) follows from the previous two assertions. 
\end{proof}

\begin{eg}
Let $g$ be a birational transformation of $\P^n_\bfk$ of degree $d$, meaning that $g$ is defined by $n+1$ homogeneous polynomials of degree
$d$ without common factor of positive degree, or equivalently that $g^*(H)\simeq dH$ where $H$ is any
hyperplane of $\P^n_\bfk$. The exceptional set of $g$ has degree $(n+1)(d-1)$; thus, 
$\exc_{\P^n_\bfk}(g)\leq (n+1)(d-1)$. More generally, if $H$ is a polarization of $X$, then 
$\exc_X(g)$ is bounded from above by a function that depends only on  the degree $\deg_H(g):=(g^*H)\cdot H^{\dim(X)-1}$.
\end{eg} 
  
\begin{thm}\label{thm:def-action-hypX}
Let $X$ be a normal projective variety. 
The group $\Bir(X)$ acts faithfully by permutations on the set $\Hyp(X)$ via the homomorphism $g\mapsto g_\bullet$
from $\Bir(X)$ to $\Bij(\Hyp(X))$.
This action commensurates the subset $\Hy(X)$ of $\Hyp(X)$: for every $g\in\Bir(X)$, 
$
\vert g_\bullet(\Hy(X))\triangle \Hy(X)\vert = \exc(g)+\exc(g^{-1}).
$
\end{thm}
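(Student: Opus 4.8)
The plan is to treat the three assertions — that $g\mapsto g_\bullet$ is a homomorphism $\Bir(X)\to\Bij(\Hyp(X))$, that this homomorphism is faithful, and that the resulting action commensurates $\Hy(X)$ with the stated cardinality — in turn, with essentially all of the work concentrated in faithfulness. The first assertion is a formal consequence of the functoriality already established: the extended functor $\cxn^\sh\to(\mathrm{Sets})$ is covariant and, since every arrow of $\cxn^\sh$ is invertible, it sends each arrow to a bijection; restricting it to the group $\Bir(X)=\Hom_{\cxn^\sh}(X,X)$ yields $(g\circ h)_\bullet=g_\bullet\circ h_\bullet$ and $(\id)_\bullet=\id_{\Hyp(X)}$, with every $g_\bullet$ a bijection of $\Hyp(X)$. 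For the commensuration count I would specialize assertion~(\ref{iconclusion}) of Proposition~\ref{pro:contraction-hyp-estimate} to $X'=X$ and $v=g$, obtaining $\vert g_\bullet(\Hy(X))\triangle\Hy(X)\vert=\exc(g)+\exc(g^{-1})$; finiteness of the two terms (hence the fact that $\Hy(X)$ is genuinely commensurated) follows from assertion~(\ref{ihm2}) of Proposition~\ref{hypermo} applied to a resolution of $g$, since only finitely many hypersurfaces can be contracted.

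The one substantial point, and the main obstacle, is faithfulness: I must show that $g_\bullet=\id_{\Hyp(X)}$ forces $g=\id$ in $\Bir(X)$. My plan is to detect a nontrivial $g$ on an exceptional divisor of a blow-up, which is a bona fide element of $\Hyp(X)$ coming from a model over $X$. Suppose $g\neq\id$. Then the locus on which $g$ is defined and differs from the identity is dense and open, so I may choose a smooth point $p$ at which $g$ is a local isomorphism, with $q:=g(p)$ smooth, distinct from $p$, and outside the indeterminacy locus of $g^{-1}$. Let $\pi_p\colon Y_p\to X$ and $\pi_q\colon Y_q\to X$ be the blow-ups of $X$ at $p$ and $q$, with exceptional divisors $E_p$ and $E_q$; these are objects of $\cxn^\sh$, and $E_p,E_q$ determine elements $[E_p]=(\pi_p)_\bullet(E_p)$ and $[E_q]=(\pi_q)_\bullet(E_q)$ of $\Hyp(X)$.

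The crux is the computation $g_\bullet([E_p])=[E_q]$ together with the inequality $[E_p]\neq[E_q]$. For the computation, I would factor $g=\pi_q\circ\tilde g\circ\pi_p^{-1}$ through the induced birational map $\tilde g\colon Y_p\dasharrow Y_q$; since $g$ restricts to an isomorphism from a neighborhood of $p$ onto a neighborhood of $q$ sending $p$ to $q$, functoriality of the blow-up of a point under isomorphisms shows that $\tilde g$ is a local isomorphism near $E_p$ and carries $E_p$ onto $E_q$. In particular $\tilde g$ does not contract $E_p$, so assertion~(\ref{ipascontra}) of Proposition~\ref{pro:contraction-hyp-estimate} gives $\tilde g_\bullet(E_p)=\tilde g_\circ(E_p)=E_q$, and functoriality of $\bullet$ then yields $g_\bullet([E_p])=(\pi_q)_\bullet\,\tilde g_\bullet\,(\pi_p)_\bullet^{-1}([E_p])=[E_q]$. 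Finally, $[E_p]\neq[E_q]$: in any model $Z\to X$ dominating both $Y_p$ and $Y_q$, the strict transforms of $E_p$ and $E_q$ are distinct irreducible hypersurfaces, because they map onto the distinct points $p$ and $q$ of $X$, and the identifications defining the inductive limit $\Hyp(X)$ are precisely equalities of such strict transforms. Hence $g_\bullet([E_p])\neq[E_p]$ and $g_\bullet\neq\id$. The step I expect to dwell on — the genuine obstacle — is the blow-up functoriality that identifies $\tilde g_\circ(E_p)$ with $E_q$; once that is in place, faithfulness and the whole theorem follow by the bookkeeping above.
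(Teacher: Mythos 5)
Your proposal is correct, and on the formal parts it matches the paper exactly: the homomorphism property comes from the extended functor on $\cxn^\sh$ (which sends the invertible arrows of $\cxn^\sh$ to bijections), and the commensuration identity is assertion~(\ref{iconclusion}) of Proposition~\ref{pro:contraction-hyp-estimate} specialized to $X'=X$, $v=g$. Where you genuinely diverge from the paper is on faithfulness, which is indeed the only substantive point. The paper's kernel argument stays entirely inside $\Hy(X)$: if $g_\bullet=\id$, then no $W\in\Hy(X)$ can be contracted (a contracted hypersurface leaves $\Hy(X)$ by Proposition~\ref{pro:contraction-hyp-estimate}), so $g_\circ(W)=W$ for every irreducible hypersurface $W\subset X$; embedding $X$ in $\P^m_\bfk$, every point of $X$ is the intersection of finitely many irreducible hyperplane sections, all fixed setwise by $g$, so every point of the domain of $g$ is fixed and $g=\id$. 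Your argument instead detects a nontrivial $g$ on elements of $\Hyp(X)\smallsetminus\Hy(X)$, namely the classes $[E_p]$, $[E_q]$ of exceptional divisors of point blow-ups, using the lift of the local isomorphism $g\colon (U,p)\to(V,q)$ to an isomorphism of blow-ups (which is standard, so the step you flagged as the obstacle is sound) together with the explicit description of equality in the colimit $\Hyp(X)=\underrightarrow{\lim}\,\Hy(Y)$ to see $[E_p]\neq[E_q]$. Both routes are valid. The paper's is shorter and proves something formally stronger -- triviality of $g$ already follows from $g_\bullet$ fixing each element of the commensurated subset $\Hy(X)$, not all of $\Hyp(X)$ -- but it leans on projective geometry: a very ample embedding and the Bertini-type fact that points are cut out by irreducible hyperplane sections. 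Your route uses more of the categorical machinery but needs no hyperplane sections at all, so it would adapt to settings where such sections are unavailable (e.g.\ proper, non-projective models, if the category were set up for them). One degenerate case worth a sentence in your write-up: if $\dim X=1$, the blow-up of a point is an isomorphism and $E_p$ is just the point $p$ itself; the argument still goes through trivially, since then $\Bir(X)=\Aut(X)$ acts on $\Hyp(X)=\Hy(X)$ in the obvious faithful way.
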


It remains only to prove that the homomorphism 
$f\in \Bir(X)\mapsto f_\bullet \in \Bij(\Hyp(X))$ is injective. An element of its kernel satisfies $f_\circ (W)=W$ for every 
hypersurface $W$ of $X$. Embedding $X$ in some projective 
space~$\P^m_\bfk$, every point of $X(\bfk)$ is the intersection 
of finitely many irreducible hyperplane sections of $X$: since all these sections are fixed 
by $f$, every point is fixed by $f$, and $f$ is the identity. 
  
 \subsection{Products of varieties}

Let $X$ and $Y$ be irreducible, normal projective varieties. 
Consider the embedding of $\Bir(X)$ into $\Bir(X\times Y)$ given by 
the action $f\cdot(x,y)=(f(x),y)$ for $f\in\Bir(X)$. The injection 
$j_Y$ of $\Hy(X)$ into $\Hy(X\times Y)$ given by $j_Y(S)=S\times Y$ 
 extends to an injection of $\Hyp(X)$ into $\Hyp(X\times Y)$; this inclusion is $\Bir(X)$-equivariant. 
The following result will be applied to  Corollary~\ref{nondistab}.

\begin{pro}\label{pro:produ}
Let a group $\Gamma$ act on $X$ by birational transformations. Then $\Gamma$ transfixes $\Hy(X)$ in $\Hyp(X)$ if and only if it transfixes $\Hy(X\times Y)$ in $\Hyp(X\times Y)$. More precisely, the subset $\Hy(X\times Y)\smallsetminus j_Y(\Hy(X))$ is $\Bir(X)$-invariant.
\end{pro}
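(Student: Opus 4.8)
The plan is to separate a purely combinatorial statement about commensurating actions from the single piece of genuine geometry, which is precisely the ``more precise'' assertion. Write $A=\Hy(X)\subseteq\Hyp(X)$ and $A'=\Hy(X\times Y)\subseteq\Hyp(X\times Y)$, and recall that $j_Y\colon\Hyp(X)\to\Hyp(X\times Y)$ is a $\Bir(X)$-equivariant injection with $j_Y(A)\subseteq A'$ (note $X\times Y$ is again irreducible, normal and projective, so $\Hyp(X\times Y)$ is defined). First I would record two elementary facts. (i) An irreducible hypersurface $W\subseteq X\times Y$ is of the form $S\times Y$, i.e.\ lies in $j_Y(A)$, if and only if its projection $p_X(W)$ is \emph{not} dense in $X$: this is a dimension count, since $p_X(W)\neq X$ forces $\dim p_X(W)=\dim X-1$, whence the generic fibre of $W\to p_X(W)$ has dimension $\dim Y$ and is therefore all of $Y$, giving $W=\overline{p_X(W)}\times Y$. (ii) If $\xi\in\Hyp(X)$ is represented by $S'\in\Hy(X')$ over a model $\pi\colon X'\to X$, then $j_Y(\xi)$ is represented by $S'\times Y$ over $\pi\times\id_Y$, and $S'\times Y$ is contracted by $\pi\times\id_Y$ if and only if $S'$ is contracted by $\pi$; hence $j_Y(\xi)\in A'\iff\xi\in A$, which yields the key identity $A'\cap j_Y(\Hyp(X))=j_Y(A)$.

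Next I would prove the refined assertion, which is the geometric core: $A'\smallsetminus j_Y(A)$ is $\Bir(X)$-invariant. Fix $f\in\Bir(X)$ and a dense open $U\subseteq X$ with $f\colon U\to V$ an isomorphism onto a dense open $V\subseteq X$; then $f\times\id_Y$ restricts to an isomorphism $U\times Y\to V\times Y$. Let $W\in A'\smallsetminus j_Y(A)$, so by (i) $p_X(W)$ is dense in $X$; in particular $W\not\subseteq(X\smallsetminus U)\times Y$, so $W\cap(U\times Y)$ is dense in $W$. Consequently $f\times\id_Y$ is a local isomorphism at the generic point of $W$, so it does not contract $W$ and the strict transform $(f\times\id_Y)_\circ(W)=\overline{(f\times\id_Y)(W\cap(U\times Y))}$ is again an irreducible hypersurface; by Proposition~\ref{pro:contraction-hyp-estimate} this gives $(f\times\id_Y)_\bullet(W)=(f\times\id_Y)_\circ(W)\in A'$. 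Moreover its projection to $X$ contains $f(p_X(W\cap(U\times Y)))$, which is dense in $V$ and hence in $X$; so by (i) it lies in $A'\smallsetminus j_Y(A)$. Applying the same argument to $f^{-1}$ shows $(f\times\id_Y)_\bullet$ permutes $A'\smallsetminus j_Y(A)$, proving the invariance.

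Finally I would deduce the equivalence formally from three inputs: equivariance of $j_Y$, the identity $A'\cap j_Y(\Hyp(X))=j_Y(A)$, and the invariance just proved. For the implication ``$A$ transfixed $\Rightarrow A'$ transfixed'', take a $\Gamma$-invariant $B\subseteq\Hyp(X)$ with $A\triangle B$ finite and set $B'=j_Y(B)\sqcup\big(A'\smallsetminus j_Y(A)\big)$; this is a disjoint union of $\Gamma$-invariant sets, and since the second summand is disjoint from $j_Y(\Hyp(X))$ one computes $A'\triangle B'=j_Y(A)\triangle j_Y(B)=j_Y(A\triangle B)$, which is finite, so $A'$ is transfixed. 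For the converse, take a $\Gamma$-invariant $B'\subseteq\Hyp(X\times Y)$ with $A'\triangle B'$ finite and set $B=j_Y^{-1}(B')$; equivariance and injectivity of $j_Y$ make $B$ a $\Gamma$-invariant subset of $\Hyp(X)$, and $j_Y^{-1}(A')=A$, so $|A\triangle B|=|(A'\triangle B')\cap j_Y(\Hyp(X))|\le|A'\triangle B'|<\infty$, using the key identity. Hence $A$ is transfixed.

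The main obstacle is the refined invariance statement of the second paragraph, and within it the clean dichotomy (i): everything reduces to checking that $f\times\id_Y$ neither creates nor destroys the ``product'' hypersurfaces $S\times Y$, which hinges on $f\times\id_Y$ being an honest isomorphism over $U\times Y$ together with the projection-to-$X$ criterion separating the two types of hypersurface. Once this is in place, the equivalence of transfixing is a formal consequence of how commensurated subsets behave under $\Gamma$-equivariant injections, as carried out above.
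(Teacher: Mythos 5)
Your proof is correct and takes essentially the same route as the paper: the geometric core is identical (hypersurfaces outside $j_Y(\Hy(X))$ are exactly those with dense projection to $X$, and $\gamma\times\id_Y$ restricts to an isomorphism $U\times Y\to V\times Y$, so it contracts none of them), which is precisely how the paper proves the refined invariance statement. The remaining combinatorial bookkeeping — your identity $\Hy(X\times Y)\cap j_Y(\Hyp(X))=j_Y(\Hy(X))$ and the explicit transfixing sets $B'$ and $j_Y^{-1}(B')$ — is just a spelled-out version of what the paper dismisses as immediate (restriction of a transfixing action, plus the decomposition of $\Hy(X\times Y)$ into the invariant piece and the equivariant image of $\Hy(X)$).
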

\begin{proof}
The reverse implication is immediate.
The direct one follows from the latter statement, which we now prove. The projection of a hypersurface 
$S\in \Hy(X\times Y)\smallsetminus j_Y(\Hy(X))$ on  $X$ is surjective. For $f\in\Bir(X)$, $f$ 
induces an isomorphism between dense open subsets $U$ and $V$ of $X$, and hence between $U\times Y$ 
and $V\times Y$; in particular, $f$ does not contract $S$. 
This shows that $f$ stabilizes $\Hy(X\times Y)\smallsetminus j_Y(\Hy(X))$.
\end{proof}

\section{Pseudo-regularization of birational transformations}

In this section, the action of $\Bir(X)$ on $\Hyp(X)$ is used to characterize and
study groups of birational transformations that are pseudo-regularizable, in the sense of Definition~\ref{d_psr}.
As before, $\bfk$ is an algebraically closed field. 

\subsection{An example}\label{par:example-intro6}

Consider the birational transformation $f(x,y)=(x+1,xy)$ of $\P^1_\bfk\times \P^1_\bfk$. The vertical 
curves $C_i=\{x=-i\}$, $i\in \Z_+$, are exceptional curves for the cyclic group $\Gamma=\langle f \rangle$: each 
of these curves is contracted by an element of $\Gamma$ onto a point, namely $f^{i+1}_\circ(C_i)=(1,0)$. 
Let $\varphi\colon Y\dasharrow \P^1_\bfk\times \P^1_\bfk$ be a birational map, and let $\U$ be a non-empty 
open subset of $Y$. Consider the subgroup $\Gamma_Y:=\varphi^{-1}\circ \Gamma\circ \varphi$ of $\Bir(Y)$. 
If $i$ is large enough, $\varphi^{-1}_\circ(C_i)$ is an irreducible curve $C'_i\subset Y$, and these curves $C'_i$ 
are pairwise distinct, so that most of them intersect $\U$. For positive integers $m$, $f^{i+m}$ maps 
$C_i$ onto $(m,0)$, and $(m,0)$ is not an indeterminacy point of $\varphi^{-1}$ if $m$ is large. Thus, 
$\varphi^{-1}\circ f^m\circ \varphi$ contracts $C'_i$, and $\varphi^{-1}\circ f^m\circ \varphi$ is not a pseudo-automorphism
of $\U$. This argument proves the following lemma. 

\begin{lem}\label{lem:counter-example}
Let $X$ be the surface $\P^1_\bfk\times \P^1_\bfk$.
Let $f\colon X\dasharrow X$ be defined by $f(x,y)=(x+1,xy)$, and let $\Gamma$ be the 
subgroup generated by $f^\ell$, for some $\ell \geq 1$. Then the cyclic group $\Gamma$ is not pseudo-regularizable.
\end{lem}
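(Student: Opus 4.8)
The plan is to show that $\Gamma = \langle f^\ell \rangle$ is \emph{not} pseudo-regularizable by exhibiting infinitely many curves that get contracted no matter how we conjugate and restrict. The key observation, already developed in the example preceding the statement, is that the vertical curves $C_i = \{x = -i\}$ are contracted by positive powers of $f$ (we have $f^{i+m}_\circ(C_i) = (m,0)$), and for large $m$ the point $(m,0)$ is not an indeterminacy point of a fixed birational map $\varphi^{-1}$. Replacing $f$ by $f^\ell$ only thins out the relevant exponents to a subprogression, so the same phenomenon persists: infinitely many of the $C_i$ are contracted by elements of $\Gamma = \langle f^\ell\rangle$.

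First I would argue by contradiction: suppose there is a triple $(Y,\U,\varphi)$ as in Definition~\ref{d_psr}, so that $\Gamma_Y := \varphi^{-1}\circ\Gamma\circ\varphi$ acts by pseudo-automorphisms on the dense open set $\U \subset Y$. After resolving we may assume $Y$ is normal and projective, and $\varphi$ is a birational map from $Y$ to $X = \P^1_\bfk\times\P^1_\bfk$. The strict transforms $C'_i := \varphi^{-1}_\circ(C_i)$ are well-defined irreducible curves in $Y$ for all but finitely many $i$, and they are pairwise distinct (since the $C_i$ are). Because $Y\smallsetminus\U$ is a proper closed subset, only finitely many of the curves $C'_i$ can fail to meet $\U$; hence infinitely many of them intersect $\U$.

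Next I would transport the contraction. Fix a large $m$ so that $(m,0)$ avoids $\Ind(\varphi^{-1})$, and consider $g := \varphi^{-1}\circ f^{m}\circ\varphi \in \Gamma_Y$ (choosing $m$ in the arithmetic progression $\ell\Z$ so that $g$ lies in $\Gamma_Y$). For any index $i$ with $i+m$ in the allowed range and $C'_i$ meeting $\U$, the composite sends $C'_i$ via $\varphi$ to (a curve mapping onto) $C_i$, then $f^{i+m}$ collapses $C_i$ to the point $(m,0)$, and $\varphi^{-1}$ maps that point (not an indeterminacy point) to a point of $Y$. Thus $g$ contracts $C'_i$ to a point of $Y$. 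A birational self-map that contracts a curve meeting $\U$ onto a point cannot act as a pseudo-automorphism on $\U$, since a pseudo-automorphism is an isomorphism in codimension one and in particular contracts no hypersurface meeting its domain. This contradicts the assumption that $\Gamma_Y$ acts by pseudo-automorphisms on $\U$, completing the argument.

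The main obstacle I anticipate is bookkeeping around indeterminacy and domain issues rather than any deep input: one must be careful that the strict transform $C'_i$ is genuinely a curve (not contracted by $\varphi^{-1}$) for large $i$, that it actually meets the fixed open set $\U$, and that the element $g$ used to contract it genuinely lies in $\Gamma = \langle f^\ell\rangle$ and genuinely contracts $C'_i$ rather than merely mapping it into the boundary. The restriction to the subgroup $\langle f^\ell\rangle$ forces the exponent $m$ (and the quantity $i+m$) to respect the congruence class modulo $\ell$, but since there remain infinitely many admissible $i$ and arbitrarily large admissible $m$, the construction goes through; one simply chooses $m$ large and divisible by $\ell$, and then infinitely many $i\equiv 0 \pmod{\ell}$ with $C'_i$ meeting $\U$.
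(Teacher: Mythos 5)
Your proof is correct and follows essentially the same route as the paper's: the paper's own argument (the paragraph preceding the lemma) uses exactly the vertical curves $C_i=\{x=-i\}$, their contraction by $f^{i+m}$ onto $(m,0)\notin\Ind(\varphi^{-1})$ for $m$ large, and the observation that infinitely many strict transforms $C'_i$ must meet $\U$. The only addition is your explicit handling of the congruence condition modulo $\ell$ (which the paper leaves implicit), and apart from the harmless notational slip where $g$ is first written as $\varphi^{-1}\circ f^{m}\circ\varphi$ but then used with exponent $i+m$, the bookkeeping in your final paragraph resolves this correctly.
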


This shows that Theorem~A requires an assumption on $\Gamma$. 
More generally, a subgroup $\Gamma\subset \Bir(X)$ cannot be pseudo-regularized if
\begin{enumerate}
\item[(a)] $\Gamma$ contracts a family of hypersurfaces $W_i\subset X$ whose union is Zariski dense
\item[(b)] the union of all strict transforms $f_\circ(W_i)$, for $f\in \Gamma$ contracting $W_i$, is a subset of $X$ whose
Zariski closure has codimension at most $1$. 
\end{enumerate}

\subsection{Characterization of pseudo-isomorphisms}
Recall that $f_\bullet$ denotes the bijection $\Hyp(X)\to \Hyp(X')$ 
which is induced by a birational map $f\colon X\dasharrow X'$. Also, for any nonempty open subset $U\subset X$, we define $\Hy(U)=\{H\in\Hy(X):H\cap U\neq\emptyset\}$; its complement in $\Hy(X)$ is finite.

\begin{pro}\label{pro:charact-pseudo-isom} Let $f:X\dasharrow X'$ be a birational map between normal projective varieties. 
Let $U\subset X$ and $U'\subset X'$ be two dense open subsets. Then, $f$ induces a pseudo-isomorphism $U\dasharrow U'$
if and only if $f_\bullet(\Hy(U))=\Hy(U')$. 
\end{pro}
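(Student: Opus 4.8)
The statement is an equivalence, so the plan is to treat the two implications separately, reducing the harder (converse) direction to Proposition~\ref{pro:pseudo-isomorphism}. For the forward direction, suppose $f$ restricts to an isomorphism $\U\to\V$ with $\U\subset U$, $\V\subset U'$ and $U\smallsetminus\U$, $U'\smallsetminus\V$ of codimension $\geq 2$. Given $H\in\Hy(U)$, the trace $H\cap U$ is a codimension-one subset of $U$, so it cannot be contained in the codimension-$\geq 2$ set $U\smallsetminus\U$; hence $H$ meets $\U$. Then $f(H\cap\U)$ is a codimension-one irreducible subset of $\V$, so $H$ is not contracted by $f$, giving $f_\bullet(H)=f_\circ(H)$ by Proposition~\ref{pro:contraction-hyp-estimate}(1); moreover $f_\circ(H)$ meets $\V\subset U'$, so $f_\bullet(H)\in\Hy(U')$. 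This yields $f_\bullet(\Hy(U))\subseteq\Hy(U')$. Applying the same reasoning to the pseudo-isomorphism $f^{-1}\colon U'\to U$ and using that $f_\bullet$ is a bijection with inverse $(f^{-1})_\bullet$, I would obtain the reverse inclusion, hence equality.

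For the converse, I would show that $f$ restricts to a birational map $f|_U\colon U\dasharrow U'$ satisfying the hypotheses of Proposition~\ref{pro:pseudo-isomorphism}, after which its implication $(1)\Rightarrow(4)$ produces the desired pseudo-isomorphism. Normality of $U$ and $U'$ is inherited from $X$ and $X'$. The non-contraction hypothesis follows from $f_\bullet(\Hy(U))=\Hy(U')\subseteq\Hy(X')$: for each $H\in\Hy(U)$ we have $f_\bullet(H)\in\Hy(X')$, so by Proposition~\ref{pro:contraction-hyp-estimate} the hypersurface $H$ is not contracted and $f_\circ(H)=f_\bullet(H)$ meets $U'$. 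Since every hypersurface of $U$ is the trace of some $H\in\Hy(U)$, this says precisely that $f|_U$ contracts no hypersurface, and the symmetric argument applied to $\Hy(U')$ handles $(f|_U)^{-1}$.

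The hard part will be verifying the remaining hypothesis of Proposition~\ref{pro:pseudo-isomorphism}, namely that $\Ind(f|_U)$ and $\Ind((f|_U)^{-1})$ have codimension $\geq 2$. Because $U'$ is only quasi-projective, Theorem~\ref{normal2} does not bound $\Ind(f|_U\colon U\dasharrow U')$ directly: apart from $\Ind(f)\cap U$, which has codimension $\geq 2$ by Theorem~\ref{normal2}, one must also control the locus $Z=\{q\in U\smallsetminus\Ind(f):f(q)\notin U'\}$, which a priori might contain a hypersurface. I would rule this out as follows: if a component of $Z$ had codimension one, its closure $H$ would belong to $\Hy(U)$, and the generic point of $H$ would map outside $U'$, forcing $f_\circ(H)=f_\bullet(H)\subseteq X'\smallsetminus U'$; this contradicts $f_\bullet(H)\in\Hy(U')$. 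Hence $Z$, and therefore $\Ind(f|_U)$, has codimension $\geq 2$, and the symmetric argument treats $(f|_U)^{-1}$. With all hypotheses in place, Proposition~\ref{pro:pseudo-isomorphism} shows that $f|_U$ is a pseudo-isomorphism $U\to U'$, completing the proof.
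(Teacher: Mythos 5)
Your proposal is correct and follows essentially the same route as the paper: the forward direction via strict transforms, and the converse by decomposing $\Ind(f|_U)$ into $\Ind(f)\cap U$ (controlled by Theorem~\ref{normal2}) plus the locus mapping outside $U'$ (ruled out in codimension one by the hypothesis $f_\bullet(\Hy(U))=\Hy(U')$), then invoking the non-contraction criterion of Proposition~\ref{pro:pseudo-isomorphism}. The only difference is cosmetic ordering of the two verifications, so there is nothing substantive to add.
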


\begin{proof}
If $f$ restricts to a pseudo-isomorphism $U\dasharrow U'$, then $f$ maps every hypersurface of $U$ to a
hypersurface of $U'$ by strict transform. And $(f^{-1})_\circ$ is an inverse for $f_\circ \colon \Hy(U)\to \Hy(U')$.
Thus, $f_\bullet(\Hy(U))=f_\circ(\Hy(U)=\Hy(U')$. 

Now, assume that $f_\bullet(\Hy(U))=\Hy(U')$.  Since $X$ and $X'$ are normal, 
$\Ind(f)$ and $\Ind(f^{-1})$ have codimension $\ge 2$ (Theorem \ref{normal2}). 

Let $f_{U,U'}$ be the birational map from $U$ to $U'$ which is induced by $f$. 
The indeterminacy set of $f_{U,U'}$ is contained in the union of the set $\Ind(f)\cap U$ and the set of points $x\in U\smallsetminus \Ind(f)$
which are mapped by $f$ in the complement of $U'$; this second part of $\Ind(f_{U,U'})$ has codimension $2$, because otherwise
there would be an irreducible hypersurface $W$ in $U$ which would be mapped in $X'\smallsetminus U'$, contradicting the equality
$f_\bullet (\Hy(U))=\Hy(U')$. Thus, the indeterminacy set of $f_{U,U'}$ has codimension $\geq 2$. 
Changing $f$ in its inverse $f^{-1}$, we see that the indeterminacy set of $f^{-1}_{U',U}\colon U'\dasharrow U'$ has codimension $\geq 2$ too. 

If $f_{U,U'}$ contracted an irreducible hypersurface $W\subset U$ onto a subset of $U'$ of
codimension $\geq 2$, then $f_\bullet(W)$ would not be contained in $\Hy(U')$ (it would correspond to an element of 
$\Hyp(X')\smallsetminus \Hy(X')$ by Proposition~\ref{pro:contraction-hyp-estimate}). 
Thus, $f_{U,U'}$ satisfies the first property of Proposition~\ref{pro:pseudo-isomorphism} 
and, therefore, is a pseudo-isomorphism. 
\end{proof}

\subsection{Characterization of pseudo-regularization}

Let $X$ be a (reduced, irreducible) normal projective variety. Let $\Gamma$ be a subgroup 
of $\Bir(X)$. Assume that the action of $\Gamma$ on $\Hyp(X)$ fixes
(globally) a subset $A\subset \Hyp(X)$ such that 
\[
\vert A\triangle \Hy(X)\vert < +\infty.
\]
In other words, $A$ is obtained from $\Hy(X)$ by removing 
finitely many hypersurfaces $W_i \in \Hy(X)$ and adding finitely many hypersurfaces 
$W'_j\in \Hyp(X)\smallsetminus\Hy(X)$. Each~$W'_j$ comes
from an irreducible hypersurface in some model $\pi_j\colon X_j\to X$, and there 
is a model $\pi\colon Y\to X$ that covers all of them (i.e.\ $\pi\circ \pi_j^{-1}$ is a  morphism
from~$Y$ to $X_j$ for every $j$). Then, $\pi^\circ(A)$ is a subset of $\Hy(Y)$. 
Changing $X$ into $Y$, $A$ into~$\pi^\circ(A)$, and $\Gamma$ into $\pi^{-1}\circ \Gamma\circ \pi$, we may assume
that 
\begin{enumerate}
\item $A=\Hy(X)\smallsetminus\{E_1, \ldots, E_\ell\}$ where the $E_i$ 
are $\ell$ distinct irreducible hypersurfaces of $X$, 

\item the action of $\Gamma$ on $\Hyp(X)$ fixes the set $A$. 
\end{enumerate}
In what follows, we denote by $\U$ the Zariski open subset $X\smallsetminus \cup_i E_i$
and by $\partial X$ the set $X\smallsetminus \U= E_1\cup \cdots \cup E_\ell$, considered as the boundary 
of the compactification $X$ of~$\U$.

\begin{lem}\label{lem:pseudo-on-U}
The group $\Gamma$ acts by pseudo-automorphisms on the open subset $\U$. If $\U$ is smooth (or locally factorial) and there is an  ample 
divisor $D$ whose support coincides with $\partial X$, then $\Gamma$ acts by automorphisms on $\U$.
\end{lem}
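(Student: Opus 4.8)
The plan is to read off both assertions from the commensurating-action characterization of pseudo-isomorphisms, Proposition~\ref{pro:charact-pseudo-isom}, applied to each $\gamma\in\Gamma$ viewed as a birational self-map of $X$ with $U=U'=\U$. First I would identify the set $\Hy(\U)$ with the invariant set $A$. An irreducible hypersurface $H\in\Hy(X)$ meets $\U=X\smallsetminus(E_1\cup\cdots\cup E_\ell)$ exactly when $H$ is not contained in $\partial X$; as $H$ is irreducible and the $E_i$ are the irreducible components of $\partial X$, this means $H\neq E_i$ for all $i$. Hence $\Hy(\U)=\Hy(X)\smallsetminus\{E_1,\ldots,E_\ell\}=A$. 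Since by hypothesis the action of $\Gamma$ on $\Hyp(X)$ fixes $A$, we get $\gamma_\bullet(\Hy(\U))=\Hy(\U)$ for every $\gamma$, and Proposition~\ref{pro:charact-pseudo-isom} yields that each $\gamma$ induces a pseudo-isomorphism $\U\to\U$, that is, a pseudo-automorphism of $\U$. As $\gamma\mapsto\gamma|_\U$ is a homomorphism into the group $\Psaut(\U)$, this establishes the first assertion with essentially no further work.

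For the second assertion I would argue in two steps. First, $\U$ is affine: the support of the ample divisor $D$ equals $\partial X$, so $\U=X\smallsetminus\partial X$ is the complement of the support of an ample divisor in a projective variety, hence affine (concretely, $\U$ is the non-vanishing locus of a section of a high power of $\OO_X(D)$ vanishing exactly along $\partial X$). Second, I would upgrade ``pseudo-automorphism'' to ``automorphism''. The natural temptation is to invoke Proposition~\ref{pro:pseudo-automorphisms-affine}, but that statement requires $\U$ to be factorial, whereas the smoothness hypothesis only provides local factoriality; so instead I would use normality directly. Fix $\gamma\in\Gamma$; by the first part it restricts to an isomorphism $\U_0\to\U_1$ between open subsets of $\U$ whose complements have codimension $\geq 2$. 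For $g\in\bfk[\U]$ the pullback $g\circ\gamma$ is regular on $\U_0$, hence is a rational function on $\U$ regular outside a closed set of codimension $\geq 2$; as $\U$ is smooth, hence normal and thus satisfies Serre's condition $S_2$, it extends to a regular function on all of $\U$ by algebraic Hartogs. Thus $\gamma^*$ maps $\bfk[\U]$ into itself, and applying the same to $\gamma^{-1}$ shows that $\gamma^*$ is an automorphism of $\bfk[\U]$; therefore $\gamma$ is a regular automorphism of the affine variety $\U$.

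The two ingredients to verify are elementary: the equality $\Hy(\U)=A$, and the affineness of the complement of an ample divisor's support. The one place demanding genuine care, and what I regard as the crux, is this final upgrade from pseudo-automorphism to automorphism: one should resist using global factoriality of $\U$ (not guaranteed by smoothness) and instead lean only on normality, extending pulled-back regular functions across the codimension-$\geq 2$ locus where $\gamma$ fails to be a local isomorphism. It is precisely this normality that makes the smoothness assumption convenient, and in fact slightly stronger than what the argument strictly requires.
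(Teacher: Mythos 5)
Your proof is correct. The first assertion is handled exactly as in the paper: you identify $A$ with $\Hy(\U)$ and invoke Proposition~\ref{pro:charact-pseudo-isom}, and your derivation of the affineness of $\U$ from the ample divisor $D$ supported on $\partial X$ (very ample multiple, hyperplane section) is also the paper's argument verbatim. Where you diverge is the last step: the paper simply cites Proposition~\ref{pro:pseudo-automorphisms-affine}, whereas you replace it with a self-contained argument pulling back regular functions along the pseudo-automorphism and extending them across the codimension~$\geq 2$ bad locus by algebraic Hartogs on the normal (because smooth) variety $\U$. That argument is sound, and it in fact proves something slightly stronger than the cited proposition, namely that $\Psaut(Z)=\Aut(Z)$ for any \emph{normal} affine variety $Z$. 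However, your stated reason for avoiding the proposition rests on a misreading of its hypothesis: for varieties, ``factorial'' standardly means that all local rings $\OO_{Z,x}$ are unique factorization domains, and this local condition is precisely what the paper's proof of Proposition~\ref{pro:pseudo-automorphisms-affine} uses (it is not asking for the coordinate ring $\bfk[\U]$ to be a UFD). Since regular local rings are UFDs, smoothness of $\U$ does place it within the scope of the proposition, so the paper's one-line citation is legitimate; your detour is unnecessary but harmless, and buys a marginally more general statement.
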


In this statement, we say that the support of a divisor $D$ coincides with $\partial X$ if 
$D=\sum_{i} a_i E_i$ with $a_i>0$ for every $1\leq i\leq \ell$.

\begin{proof}
Since $A=\Hy(\U)$ is $\Gamma$-invariant, Proposition~\ref{pro:charact-pseudo-isom} shows that $\Gamma$ acts by pseudo-automorphisms on $\U$.
Since $D$ is an ample divisor, some positive multiple $mD$ is very ample, and the complete
linear system $\vert mD\vert$ provides an embedding of $X$ in a projective space. The divisor
$mD$ corresponds to a hyperplane section of $X$ in this embedding, and the open subset $\U$ 
is an affine variety because the support of $D$ is equal to $\partial X$. 
Proposition~\ref{pro:pseudo-automorphisms-affine} concludes the proof of the lemma. 
\end{proof}

By Theorem~\ref{thm:def-action-hypX}, every subgroup of $\Bir(X)$ acts on $\Hyp(X)$ and 
commensurates $\Hy(X)$. If $\Gamma$ transfixes $\Hy(X)$, 
there is an invariant subset $A$ of $\Hyp(X)$ for which $A\triangle \Hy(X)$ is finite. 
Thus, one gets the following characterization of pseudo-regularizability (the converse being immediate).

\begin{thm}\label{thm:FW-pseudo-regularization}
Let $X$ be a normal projective variety over an algebraically closed field $\bfk$. 
Let $\Gamma$ be a subgroup of $\Bir(X)$. Then $\Gamma$ transfixes the subset $\Hy(X)$ of 
$\Hyp(X)$ if and only if $\Gamma$ is pseudo-regularizable. More precisely, if $\Gamma$ transfixes $\Hy(X)$, then there is a birational
morphism $\pi\colon Y\to X$ and a dense open subset $\U\subset Y$ such that $\pi^{-1}\circ \Gamma\circ \pi$
acts by pseudo-automorphisms on $\U$.
\end{thm}

Of course, this theorem applies directly when $\Gamma\subset \Bir(X)$ 
has property {\FW} because Theorem~\ref{thm:def-action-hypX} shows that $\Gamma$ commensurates 
$\Hy(X)$. This proves Theorem~A.

\begin{rem}
Assuming ${\mathrm{char}}(\bfk)=0$, we may apply the resolution of singularities and work in the category of smooth varieties.
As explained in Remark~\ref{rem:intro-extreme-cases} and Lemma~\ref{lem:pseudo-on-U}, there are two extreme cases, corresponding to 
an empty or an ample boundary $B=\cup_i E_i$. 
 If $\U=Y$, $\Gamma$ acts by pseudo-automorphisms on the projective model~$Y$. As explained in 
Theorem~\ref{thm:pseudo-automorphisms-neron-severi}, $\Psaut(Y)$ is an extension of a subgroup of $\GL(\NS(Y))$ by an algebraic group which contains 
$\Aut(Y)^0$ as a finite index subgroup.
 If $\U$ is affine, $\Gamma$ acts by automorphisms on $\U$. The group $\Aut(\U)$ may be huge 
($\U$ could be the affine space), but there are techniques to study groups of automorphisms 
that are not available for birational transformations; for instance $\Gamma$ is residually finite 
and virtually torsion free if $\Gamma$ is a group of automorphisms generated by finitely many elements
(see \cite{Bass-Lubotzky:1983}). 
\end{rem}

\subsection{Distorted elements}\label{par:distorsion}
 Theorem~\ref{thm:FW-pseudo-regularization} may be applied when $\Gamma$ has Property {\FW}, or 
 for pairs $(\Lambda,\Gamma)$ with relative 
Property {\FW}. Here is one application:

\begin{cor}
Let $X$ be an irreducible projective variety. Let
$\Gamma$ be a distorted cyclic subgroup of $\Bir(X)$. Then $\Gamma$ is pseudo-regularizable.
\end{cor}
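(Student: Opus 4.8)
The plan is to deduce the corollary from the transfixing criterion of Theorem~\ref{thm:FW-pseudo-regularization}, combined with the relative Property \FW{} enjoyed by finitely generated abelian subgroups consisting of distorted elements. The whole argument is a matter of assembling three ingredients already available in the paper; no new geometry is needed.

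First I would reduce to the case where $X$ is normal. Since pseudo-regularizability is defined up to birational conjugacy, Definition~\ref{d_psr} permitting one to replace $X$ by any birational model $Y$, I pass to the normalization $\nu\colon X^\nu\to X$, a birational morphism with $X^\nu$ normal and projective. Conjugation by $\nu$ identifies $\Gamma$ with a subgroup of $\Bir(X^\nu)$, and a pseudo-regularization $(Y,\U,\psi)$ of that conjugate yields the triple $(Y,\U,\nu\circ\psi)$ for $\Gamma\subset\Bir(X)$. Thus I may assume $X$ normal, so that Theorems~\ref{thm:def-action-hypX} and~\ref{thm:FW-pseudo-regularization} apply.

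Next I would extract the finitely generated witness of distortion. Write $\Gamma=\langle g\rangle$ with $g$ distorted in $\Bir(X)$: by definition there is a finite subset $\Sigma\subset\Bir(X)$ with $g\in H:=\langle\Sigma\rangle$ and $\tfrac1n\,|g^n|_\Sigma\to 0$. The group $\Gamma$ is finitely generated and abelian, and every power is distorted in $H$ as well, since $\tfrac1k\,|g^{nk}|_\Sigma=n\cdot\tfrac1{nk}\,|g^{nk}|_\Sigma\to 0$. Hence the relative Property~\FW{} criterion for finitely generated abelian subgroups of distorted elements (the lemma of \cite{Cornulier:Survey-FW} recalled in the Distortion paragraph) applies with ambient group $H$ and distorted abelian subgroup $\Gamma$, giving that the pair $(H,\Gamma)$ has relative Property~\FW. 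Finally, by Theorem~\ref{thm:def-action-hypX} the group $\Bir(X)$, and therefore its subgroup $H$, acts on $\Hyp(X)$ commensurating $\Hy(X)$. Relative Property~\FW{} of $(H,\Gamma)$ means precisely that this commensurating action of $H$ is transfixing in restriction to $\Gamma$; thus $\Gamma$ transfixes $\Hy(X)$ in $\Hyp(X)$, and Theorem~\ref{thm:FW-pseudo-regularization} yields that $\Gamma$ is pseudo-regularizable.

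The only genuinely delicate points are the bookkeeping in the two reductions: checking that pseudo-regularizability is truly insensitive to replacing $X$ by its normalization, and applying the relative criterion to the auxiliary finitely generated group $H$ rather than to all of $\Bir(X)$, since the distortion hypothesis is available only inside some such $H$. Once one is careful that the commensurating action of $\Bir(X)$ on $\Hyp(X)$ restricts to $H$ while still commensurating $\Hy(X)$, the conclusion follows formally.
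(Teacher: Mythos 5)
Your proof is correct and follows essentially the same route the paper intends: combine the lemma that a finitely generated abelian subgroup consisting of distorted elements gives a pair with relative Property {\FW}, the commensurating action of $\Bir(X)$ on $\Hyp(X)$ from Theorem~\ref{thm:def-action-hypX}, and the transfixing criterion of Theorem~\ref{thm:FW-pseudo-regularization}. Your two extra bookkeeping steps (passing to the normalization, and routing through the finitely generated witness $H$) are sound but not strictly needed for the second one, since the cited lemma allows the ambient group to be $\Bir(X)$ itself and distortion in $\Bir(X)$ is by definition distortion in some such $H$.
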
 

The contraposition is useful to show that some elements of $\Bir(X)$ are undistorted. Let us state it in a strong ``stable'' way.

\begin{cor}\label{nondistab}
Let $X$ be a normal irreducible projective variety and let $f$ be an element of $\Bir(X)$ such that the cyclic group $\langle f\rangle$ does not transfix $\Hy(X)$ (i.e., $f$ is not pseudo-regularizable). Then  $\langle f\rangle$ is undistorted in $\Bir(X)$; more generally the cyclic subgroup $\langle f\times\mathrm{Id}_Y\rangle$ is undistorted in $\Bir(X\times Y)$ for every irreducible projective variety $Y$.
\end{cor}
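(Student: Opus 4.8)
The plan is to deduce this corollary from the one immediately preceding it, by contraposition, using Proposition~\ref{pro:produ} to transport the non-transfixing property from $X$ to the product $X\times Y$. Recall that the preceding corollary asserts that a distorted cyclic subgroup of $\Bir(Z)$ is pseudo-regularizable for every irreducible projective variety $Z$; its contrapositive reads: if $\langle g\rangle\subset\Bir(Z)$ is \emph{not} pseudo-regularizable, then $g$ is undistorted in $\Bir(Z)$. Applying this with $Z=X$ and $g=f$ already settles the first assertion, since by Theorem~\ref{thm:FW-pseudo-regularization} the hypothesis that $\langle f\rangle$ does not transfix $\Hy(X)$ is exactly the failure of pseudo-regularizability.

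For the second assertion I would apply the same contrapositive to the irreducible projective variety $X\times Y$ and the element $f\times\mathrm{Id}_Y$: it suffices to show that $\langle f\times\mathrm{Id}_Y\rangle$ is not pseudo-regularizable, that is, by Theorem~\ref{thm:FW-pseudo-regularization}, that it does not transfix $\Hy(X\times Y)$. The link with $X$ is furnished by Proposition~\ref{pro:produ}: under the embedding $\Bir(X)\hookrightarrow\Bir(X\times Y)$, $f\mapsto f\times\mathrm{Id}_Y$, the group $\langle f\rangle$ transfixes $\Hy(X\times Y)$ if and only if it transfixes $\Hy(X)$. By hypothesis the latter fails, hence so does the former, and the contrapositive then yields that $f\times\mathrm{Id}_Y$ is undistorted in $\Bir(X\times Y)$.

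The one technical point to address is that Proposition~\ref{pro:produ} requires both factors to be normal, whereas $Y$ is only assumed irreducible projective. I would dispose of this by passing to the normalization $\nu\colon Y^\nu\to Y$: the induced morphism $\mathrm{Id}_X\times\nu\colon X\times Y^\nu\to X\times Y$ is birational, so it conjugates $\Bir(X\times Y)$ isomorphically onto $\Bir(X\times Y^\nu)$ and carries $f\times\mathrm{Id}_Y$ to $f\times\mathrm{Id}_{Y^\nu}$. Since a group isomorphism preserves distortion, and since pseudo-regularizability is by Definition~\ref{d_psr} a birational invariant, nothing is lost in this reduction. As $\bfk$ is algebraically closed, $X\times Y^\nu$ is again a normal projective variety, so Proposition~\ref{pro:produ} applies to the pair $(X,Y^\nu)$, giving the desired equivalence.

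I do not expect a genuine obstacle: the proof is a formal chain
\[
\langle f\times\mathrm{Id}_Y\rangle \text{ pseudo-regularizable}\iff \langle f\rangle \text{ transfixes } \Hy(X),
\]
assembled from Theorem~\ref{thm:FW-pseudo-regularization}, Proposition~\ref{pro:produ}, and the preceding corollary. The only care required is bookkeeping: verifying that the normalization reduction is compatible with all three notions involved (distortion, transfixing, and pseudo-regularizability), and recording that a product of normal varieties over an algebraically closed field remains normal so that Proposition~\ref{pro:produ} is legitimately invoked.
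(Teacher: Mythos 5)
Your proof is correct and follows essentially the same route as the paper: the paper deduces the first assertion as the contrapositive of the preceding corollary (via Theorem~\ref{thm:FW-pseudo-regularization}) and dispatches the product case in one line by invoking Proposition~\ref{pro:produ}, exactly as you do. Your normalization step for a non-normal $Y$ is careful bookkeeping that the paper silently omits, and it is handled correctly (products of normal varieties over an algebraically closed field are indeed normal, and both distortion and pseudo-regularizability transfer under the birational conjugation).
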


The latter consequence indeed follows from Proposition \ref{pro:produ}. This can be applied to various examples, such as those in Example \ref{gatransfix}.

\section{Illustrating results}

\subsection{Surfaces whose birational group is transfixing}\label{surf_birt}

If $X$ is a projective curve, $\Bir(X)$ always transfixes $\Hy(X)$, since it acts by automorphisms on a smooth model of $X$. 
We now consider the same problem for surfaces.

\begin{pro}\label{ruled}
Let $X$ be a normal irreducible variety of positive dimension over an algebraically closed field $\bfk$. 
Then $\Bir(X\times\mathbb{P}^1)$ does not transfix $\Hy(X\times\mathbb{P}^1)$. 
\end{pro}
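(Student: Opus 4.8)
The plan is to reduce the statement to an unboundedness assertion and then exhibit explicit contracting transformations. By Theorem~\ref{thm:def-action-hypX} the group $\Bir(X\times\P^1)$ commensurates $\Hy(X\times\P^1)$ inside $\Hyp(X\times\P^1)$, with
$\#\bigl(g_\bullet(\Hy(X\times\P^1))\triangle\Hy(X\times\P^1)\bigr)=\exc(g)+\exc(g^{-1})$ for every $g$. As recalled in \S\ref{par:FW}, a commensurated subset is transfixed if and only if the function $g\mapsto\#(A\triangle g_\bullet A)$ is bounded on the group. Hence, to prove that $\Hy(X\times\P^1)$ is \emph{not} transfixed, it suffices to produce, for every integer $N\ge 1$, an element $g_N\in\Bir(X\times\P^1)$ with $\exc(g_N)\ge N$. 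Here I may assume $X$ is normal and projective, so that $X\times\P^1$ is again normal and projective and the construction of Section~\ref{catbir} applies to it.

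The elements will be fiberwise M\"obius transformations, that is, elements of $\PGL_2(\bfk(X))\subset\Bir(X\times\P^1)$ acting on the $\P^1$-factor. Since $\dim X\ge 1$, I would first fix a nonconstant rational function $\pi\colon X\dasharrow\P^1$. For $N$ pairwise distinct scalars $a_1,\dots,a_N\in\bfk$, set $u_N=\prod_{j=1}^N(\pi-a_j)\in\bfk(X)^*$ and define $F_N\in\Bir(X\times\P^1)$ by $F_N(x,t)=(x,u_N(x)\,t)$, whose inverse is $(x,t)\mapsto(x,t/u_N(x))$.

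Next I would check that $\exc(F_N)\ge N$. Passing to a projective model resolving $\pi$, each fiber $\pi^{-1}(a_j)\subset X$ is a nonempty hypersurface, and the $N$ fibers, lying over distinct points, are pairwise disjoint and therefore have pairwise distinct irreducible components. Fix an irreducible component $W_j$ of $\pi^{-1}(a_j)$; then $u_N$ vanishes along $W_j$, so $F_N$ sends the generic point of the irreducible hypersurface $W_j\times\P^1$ into $\{t=0\}$. By Theorem~\ref{normal2} the indeterminacy locus of $F_N$ has codimension $\ge 2$, so $W_j\times\P^1$ is not contained in it and its strict transform is $W_j\times\{0\}$, of codimension $2$. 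Thus $F_N$ contracts the $N$ pairwise distinct hypersurfaces $W_1\times\P^1,\dots,W_N\times\P^1$, whence $\exc(F_N)\ge N$. Letting $N\to\infty$ shows that $\exc$ is unbounded on $\Bir(X\times\P^1)$, so $g\mapsto\exc(g)+\exc(g^{-1})$ is unbounded and $\Hy(X\times\P^1)$ is not transfixed; equivalently, by Theorem~\ref{thm:FW-pseudo-regularization}, $\Bir(X\times\P^1)$ is not pseudo-regularizable.

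The only delicate point — and the reason for the precise design of $F_N$ — is guaranteeing that the multiplier vanishes along \emph{arbitrarily many} distinct irreducible hypersurfaces. This is exactly why I use the nonconstant function $\pi$ and its fibers over $N$ distinct values, rather than a single divisor: a fixed fiberwise multiplication $(x,t)\mapsto(x,u(x)t)$ has all its powers with the same determinant locus $\{u=0\}$, so iteration would not increase the number of contracted hypersurfaces, and the growth must instead come from increasing the degree of the multiplier. I expect this component-counting step (nonemptiness and distinctness of the fibers $\pi^{-1}(a_j)$ as hypersurfaces) to be the main, though modest, obstacle; everything else is a routine local verification on fibers.
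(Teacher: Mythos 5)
Your proof is correct and takes essentially the same approach as the paper's: both use fiberwise multiplications $(x,t)\mapsto(x,u(x)t)$ that contract the hypersurfaces of $Z(u)\times\P^1$ onto the codimension-two set $Z(u)\times\{0\}$, and conclude that $\Hy(X\times\P^1)$ is not transfixed because the number of contracted hypersurfaces is unbounded. The only cosmetic difference is that the paper reduces to $X$ affine and takes a regular multiplier $\varphi$ (asserting the unboundedness of the number of components of $Z(\varphi)$ without an explicit construction), whereas you work on a projective model and make this unboundedness explicit via $u_N=\prod_{j}(\pi-a_j)$ and the formula of Theorem~\ref{thm:def-action-hypX}.
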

\begin{proof}
We can suppose that $X$ is affine and work in the model $X\times\mathbb{A}^1$. For $\varphi$ a nonzero regular function on $X$, define a regular self-map $f$ of $X\times\mathbb{A}^1$ by $f(x,t)=(x,\varphi(x)t)$. Denoting by $Z(\varphi)$ the zero set of $\varphi$, we remark that $f$ induces an automorphism of the open subset $(X\smallsetminus Z(\varphi))\times\mathbb{A}^1$. In particular, it induces a permutation of $\Hy((X\smallsetminus Z(\varphi))\times\mathbb{A}^1)$.  Set $M=\Hy(X\times\mathbb{A}^1)$. Since $f$ contracts the complement $Z(\varphi)\times\mathbb{A}^1$ to the subset $Z(\varphi)\times\{0\}$, which has codimension $\ge 2$, its action on $\Hyp(X\times\mathbb{A}^1)$ maps  the  codimension $1$ components of $Z(\varphi)\times\mathbb{A}^1$ outside $M$. Therefore $M\smallsetminus f^{-1}(M)$ is the set of irreducible components of $Z(\varphi)\times\mathbb{A}^1$. Its cardinal is equal to the number of irreducible components of $Z(\varphi)$. When $\varphi$ varies, this number is unbounded;  hence, $\Bir(X\times\mathbb{A}^1)$ does not transfix $\Hy(X\times\mathbb{A}^1)$.
\end{proof}

Varieties which are birational to the product of a variety and the projective line are said to be {\bf ruled}. 
Proposition~\ref{ruled} states that $\Bir(Y)$ does not transfix $\Hy(Y)$ when $Y$ is  ruled and of dimension $\ge 2$.
The converse holds for surfaces:

\begin{thm}\label{birtran} Let $\bfk$ be an algebraically closed field. 
Let $X$ be an irreducible normal projective surface over $\bfk$. The following are equivalent:
\begin{enumerate}
\item\label{notransfix} $\Bir(X)$ does not transfix $\Hy(X)$;
\item\label{kod} the Kodaira dimension of $X$ is $-\infty$;
\item\label{kod2} $X$ is ruled;
\item\label{kod3} there is no projective surface $Y$ that is birationally equivalent to $X$ and satisfies $\Bir(Y)=\Aut(Y)$.
\end{enumerate} 
\end{thm}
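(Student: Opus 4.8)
The plan is to prove Theorem~\ref{birtran} by establishing the cycle of implications
\eqref{notransfix} $\Rightarrow$ \eqref{kod} $\Rightarrow$ \eqref{kod2} $\Rightarrow$ \eqref{kod3} $\Rightarrow$ \eqref{notransfix},
leaning on the classification of surfaces and on the machinery already developed. The implication \eqref{kod2} $\Rightarrow$ \eqref{notransfix} is exactly Proposition~\ref{ruled}: if $X$ is ruled, it is birational to $C\times\P^1$, and since transfixing $\Hy$ is a birational invariant (the action on $\Hyp$ is functorial and $\Hy$ changes only by a finite set under birational conjugacy), $\Bir(X)$ does not transfix $\Hy(X)$. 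The equivalence \eqref{kod} $\Leftrightarrow$ \eqref{kod2} is the classical Enriques--Castelnuovo fact that a surface has Kodaira dimension $-\infty$ if and only if it is ruled; in positive characteristic this still holds and I would simply cite it. So the genuine content is in the two remaining implications.

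For \eqref{kod3} $\Rightarrow$ \eqref{notransfix} I would argue by contraposition: assume $\Bir(X)$ transfixes $\Hy(X)$, i.e.\ $\Bir(X)$ is pseudo-regularizable by Theorem~\ref{thm:FW-pseudo-regularization}, and produce a model $Y$ with $\Bir(Y)=\Aut(Y)$. Applying pseudo-regularizability to the \emph{whole} group $\Bir(X)$ yields a triple $(Y,\U,\varphi)$ on which the conjugate of $\Bir(X)$ acts by pseudo-automorphisms of $\U$; but pseudo-regularizability of all of $\Bir(X)$ should force the boundary to be empty, so that $\Bir(X)$ acts by pseudo-automorphisms on a projective surface $Y$, and in dimension $2$ pseudo-automorphisms of a smooth projective surface are automorphisms (as recalled after Theorem~\ref{thm:pseudo-automorphisms-neron-severi} and in the introduction). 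Hence $\Bir(Y)=\Psaut(Y)=\Aut(Y)$. The delicate point here is passing from ``each finitely generated subgroup is pseudo-regularized'' to a single model $Y$ working for the entire group $\Bir(X)$; I expect one must either invoke a Noetherian/finiteness argument on the number of $\Bir(X)$-contracted hypersurfaces or directly analyze the invariant set $A\triangle\Hy(X)$ globally.

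For the remaining implication \eqref{kod} $\Rightarrow$ \eqref{kod3}, the contrapositive reads: if every model $Y$ has $\Bir(Y)\neq\Aut(Y)$, then $\kod(X)=-\infty$; equivalently, if $\kod(X)\ge 0$, I must exhibit a model $Y$ with $\Bir(Y)=\Aut(Y)$. This is where surface theory does the work. When $\kod(X)\ge 0$ there is a unique minimal model $X_{\min}$, and $\Bir(X)=\Bir(X_{\min})=\Aut(X_{\min})$ because birational maps between minimal surfaces of nonnegative Kodaira dimension are isomorphisms (no $(-1)$-curves to contract, and the minimal model is canonical). Taking $Y=X_{\min}$ gives $\Bir(Y)=\Aut(Y)$, contradicting \eqref{kod3}.

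The main obstacle I anticipate is the globalization in \eqref{kod3} $\Rightarrow$ \eqref{notransfix}: Theorem~A and Theorem~\ref{thm:FW-pseudo-regularization} regularize \emph{a given group}, but I must apply them to the possibly non-finitely-generated group $\Bir(X)$ and then rule out a nonempty boundary $\partial Y$. Controlling $\partial Y$ requires showing that if the boundary were nonempty and, say, ample, then $\U$ would be affine and $\Bir(X)$ would act by automorphisms on an affine surface, which by Danilov--Gizatullin-type results constrains $X$ to be rational/ruled, contradicting \eqref{kod}; conversely if the boundary were nonempty but not forcing affineness, one still needs a structural argument. Tying these boundary cases back to the Kodaira dimension, and confirming that the equivalences close up consistently, is the step I would treat most carefully, and it is plausibly where the full proof invokes the finer classification results announced in the surrounding text.
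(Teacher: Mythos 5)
Your proposal does not establish the equivalence, because the implications you actually prove never let you exit statement (1). In your third paragraph you announce the implication (2) $\Rightarrow$ (4), but what you then write down and prove --- ``if $\kod(X)\ge 0$, exhibit a model $Y$ with $\Bir(Y)=\Aut(Y)$, namely the unique minimal model'' --- is the contrapositive of (4) $\Rightarrow$ (2), i.e.\ the \emph{converse} of what you announced. So, granting all your arguments, the implications in hand are (2) $\Leftrightarrow$ (3), (3) $\Rightarrow$ (1), (4) $\Rightarrow$ (1), and (4) $\Rightarrow$ (2): statement (1) is a sink, nothing flows out of it, and the four conditions are not shown to be equivalent. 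The missing ingredient is exactly the paper's one-line implication (1) $\Rightarrow$ (4): if some surface $Y$ birationally equivalent to $X$ satisfies $\Bir(Y)=\Aut(Y)$, then $\Bir(Y)$ fixes $\Hy(Y)$ inside $\Hyp(Y)$ (automorphisms send irreducible curves to irreducible curves), hence transfixes it; since transfixing is a birational invariant --- a fact you yourself invoke in your first paragraph --- $\Bir(X)$ transfixes $\Hy(X)$. With this observation the cycle (1) $\Rightarrow$ (4) $\Rightarrow$ (2) $\Leftrightarrow$ (3) $\Rightarrow$ (1) closes, and this is precisely the paper's proof.

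Relatedly, your second paragraph attacks the direction (4) $\Rightarrow$ (1) by contraposition, trying to show via Theorem~\ref{thm:FW-pseudo-regularization} that a transfixing $\Bir(X)$ acts by pseudo-automorphisms on a \emph{projective} model (empty boundary); you correctly flag that you cannot justify this boundary-emptiness claim, and indeed nothing in the pseudo-regularization machinery forces it. But this whole effort is unnecessary: the chain (4) $\Rightarrow$ (2) $\Rightarrow$ (3) $\Rightarrow$ (1), assembled from your minimal-model argument, the classical equivalence, and Proposition~\ref{ruled}, already yields (4) $\Rightarrow$ (1) by composition --- when $\Bir(X)$ transfixes, the model with $\Bir(Y)=\Aut(Y)$ is simply the minimal model, reached through Kodaira dimension rather than through pseudo-regularization. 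So the genuine gap is not the one you flagged, but the absence of the easy implication (1) $\Rightarrow$ (4).
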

\begin{proof}
The equivalence between (\ref{kod}) and (\ref{kod2}) is classical (see~\cite{BPVDVH} and~\cite{Badescu, Liedtke}). 
The group $\Aut(Y)$ fixes $\Hy(Y)\subset \Hyp(Y)$, hence (\ref{notransfix}) implies (\ref{kod3}). 
If the Kodaira dimension of $X$ is $\geq 0$, then $X$ has a unique minimal model $X_0$, 
and $\Bir(X_0)=\Aut(X_0)$. Thus, (\ref{kod3}) implies~(\ref{kod}).
Finally, Proposition \ref{ruled} shows that  (\ref{kod2}) implies (\ref{notransfix}).
\end{proof}
 
\begin{thm}\label{thm:fin-gen-transfix}
Let $X$ be an irreducible projective surface over an algebraically closed field $\bfk$. 
The following are equivalent:
\begin{enumerate}
\item\label{fgnt} some finitely generated subgroup of $\Bir(X)$ does not transfix $\Hy(X)$;
\item\label{cynt} some cyclic subgroup of $\Bir(X)$ does not transfix $\Hy(X)$;
\item\label{kod21}
\begin{itemize}
\item $\bfk$ has characteristic $0$, and $X$ is birationally equivalent to the product of the projective line with a curve of genus 0 or 1, or
\item $\bfk$ has positive characteristic, and $X$ is a rational surface.
\end{itemize}
\end{enumerate} 
\end{thm}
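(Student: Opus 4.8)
The implication $(2)\Rightarrow(1)$ is trivial, since a cyclic group is finitely generated. The plan is therefore to establish $(3)\Rightarrow(2)$ by an explicit construction and $(1)\Rightarrow(3)$ in contrapositive form: if $X$ is \emph{not} of the type described in~(3), then every finitely generated subgroup of $\Bir(X)$ transfixes $\Hy(X)$. Throughout I use that transfixing $\Hy(X)$ is a birational invariant, which follows from Theorem~\ref{thm:FW-pseudo-regularization} together with the fact (recorded just before Proposition~\ref{pro:contraction-hyp-estimate}) that a birational map $\psi\colon X\dasharrow X'$ induces a bijection $\psi_\bullet$ conjugating the two actions with $|\psi_\bullet(\Hy(X))\triangle\Hy(X')|<\infty$. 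This lets me freely replace $X$ by any convenient birational model.

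For $(3)\Rightarrow(2)$ I would first reduce to a product. In characteristic $0$ I may assume $X=\P^1\times C$ with $g(C)\in\{0,1\}$; I then choose $\tau\in\Aut(C)$ of infinite order (a map $x\mapsto\lambda x$ or $x\mapsto x+1$ when $g(C)=0$, a translation by a non-torsion point when $g(C)=1$, such a point existing in characteristic $0$) and a non-constant $\phi\in\bfk(C)^*$, and set $f(c,y)=(\tau(c),\phi(c)\,y)$, which for $g(C)=0$ recovers Lemma~\ref{lem:counter-example}. Exactly as there, the iterate $f^n$ contracts the fibres over the $n$ distinct points $\tau^{-k}(z_0)$, $0\le k<n$, where $z_0$ is a zero of $\phi$; hence $\exc(f^n)\ge n$, and Theorem~\ref{thm:def-action-hypX} gives $|f^n_\bullet(\Hy(X))\triangle\Hy(X)|\ge n$, so $\langle f\rangle$ does not transfix. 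In positive characteristic with $X$ rational I may instead take $X=\P^1\times\P^1$; the subtlety is that $\PGL_2$ and $\mathbb{G}_m$ over $\overline{\mathbb{F}_p}$ are torsion, so the above twist degenerates. One must then exhibit a transformation that is genuinely not pseudo-regularizable---for instance a sufficiently general quadratic transformation whose exceptional curves have infinite forward orbit, which is not birationally conjugate to an automorphism and hence, by Theorem~\ref{thm:FW-pseudo-regularization}, does not transfix.

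For $(1)\Rightarrow(3)$, suppose $X$ is not of type~(3) and let $\Gamma\le\Bir(X)$ be finitely generated. If the Kodaira dimension of $X$ is $\ge 0$, then $X$ is not ruled and Theorem~\ref{birtran} shows that $\Bir(X)$ itself transfixes $\Hy(X)$, so $\Gamma$ does a fortiori. Otherwise $X$ has Kodaira dimension $-\infty$ and is ruled over a curve $B$, and the failure of~(3) forces $g(B)\ge 2$, or $g(B)\ge 1$ in positive characteristic. Since $B$ has positive genus, the $\P^1$-ruling $X\dasharrow B$ is unique up to birational equivalence, so $\Bir(X)$ preserves it, yielding a homomorphism $\Bir(X)\to\Aut(B)$ whose kernel is the group $\PGL_2(\bfk(B))$ of birational transformations over $B$. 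I would pass to the finite-index subgroup $\Gamma_0=\ker(\Gamma\to\Aut(B))\subset\PGL_2(\bfk(B))$ and clear denominators: a finite generating set of $\Gamma_0$, lifted to $\GL_2(\bfk(B))$, has all entries regular and determinant invertible on some affine open $B'\subset B$ with finite complement, so $\Gamma_0$ acts by \emph{automorphisms} of the trivial bundle $\P^1\times B'$. An automorphism contracts no hypersurface of $\P^1\times B'$, so every hypersurface contracted by an element of $\Gamma_0$ lies over the finite set $B\smallsetminus B'$; thus $\exc$ is uniformly bounded on $\Gamma_0$ and, by Theorem~\ref{thm:def-action-hypX}, $\Gamma_0$ transfixes $\Hy(X)$. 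As transfixing passes from a finite-index subgroup to the whole group, $\Gamma$ transfixes as well.

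The crux---and the source of the characteristic dichotomy---is the claim that the image of $\Gamma$ in $\Aut(B)$ is finite, which reduces to showing that $\Aut(B)$ has no infinite finitely generated subgroup. For $g(B)\ge 2$ this is automatic, as $\Aut(B)$ is finite. For $g(B)=1$ it is exactly where positive characteristic is indispensable: the translation part of $\Aut(B)$ is $B(\bfk)$, whose finitely generated subgroups are finite precisely when $B(\bfk)$ is torsion---which holds over $\overline{\mathbb{F}_p}$, where every point of $B$ is defined over a finite field, whereas in characteristic $0$ elliptic curves carry non-torsion points, enabling the construction of Step~$(3)\Rightarrow(2)$ and explaining why genus-one bases enter~(3) only in characteristic $0$. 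Making this finiteness step precise, together with the non-explicit construction for rational surfaces in positive characteristic, is where I expect the real work to lie.
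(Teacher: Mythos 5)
Your overall architecture coincides with the paper's, and your proof of (1)$\Rightarrow$(3) is essentially the paper's argument, correctly executed: reduce via Theorem~\ref{birtran} to $X=C\times\P^1$ with $g(C)\ge 2$ (resp.\ $\ge 1$ in positive characteristic), use that finitely generated subgroups of $\Aut(C)$ are then finite to pass to a finite-index subgroup $\Gamma_0\subset\PGL_2(\bfk(C))$, and use a finite generating set to produce a cofinite open $B'\subset C$ over which $\Gamma_0$ acts by automorphisms of $\P^1\times B'$, so that $\Hy(\P^1\times B')$ is invariant and $\Gamma_0$ (hence $\Gamma$) transfixes. The genuine problems are in (3)$\Rightarrow$(2). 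Your characteristic-zero construction is false as stated: for an arbitrary non-constant $\phi$ it is \emph{not} true that $f^n$ contracts the fibres over $\tau^{-k}(z_0)$, $0\le k<n$, because zeros of $\phi$ can be cancelled by poles along the $\tau$-orbit in the telescoping product $\Phi_n(c)=\prod_{k=0}^{n-1}\phi(\tau^k(c))$. Concretely, on $X=\P^1\times\P^1$ take $\tau(x)=x+1$ and $\phi(x)=(x+1)/x$: then $\Phi_n(x)=(x+n)/x$, so $\exc(f^n)=2$ for all $n$; indeed $f(x,y)=\bigl(x+1,\tfrac{x+1}{x}y\bigr)$ is conjugate, via $h(x,y)=(x,xy)$, to the automorphism $(x,y)\mapsto(x+1,y)$, so $\langle f\rangle$ transfixes $\Hy(X)$ even though $\phi$ is non-constant. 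One must impose, as the paper does, that the $\tau$-orbit of the chosen zero $t_0$ of $\phi$ contains no \emph{other} zero or pole of $\phi$ (this also guarantees the points $\tau^{-k}(z_0)$ are distinct), and in the genus-one case one must further prove that such a pair (non-torsion translation $s$, function $\phi$) exists; that existence statement is exactly where characteristic $0$ enters and it requires an argument.

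Second, the positive-characteristic half of (3)$\Rightarrow$(2) is not proved, and your sketch cannot be repaired as written. Over $\bfk=\overline{\F}_p$ every birational transformation of $\P^2_\bfk$ is defined over some finite field $\F_q$, so every point orbit is contained in a finite set $\P^2(\F_{q^d})$: a quadratic map whose exceptional curves are contracted onto points with infinite forward orbit does not exist, and if instead ``infinite forward orbit'' refers to the orbit of the curve inside $\Hyp(X)$ you are assuming precisely what must be proven. Moreover ``not birationally conjugate to an automorphism'' is strictly weaker than ``not pseudo-regularizable'', so the appeal to Theorem~\ref{thm:FW-pseudo-regularization} is a non sequitur. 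The paper's Example~\ref{p2compli} supplies the missing mechanism, and it works over every algebraically closed field: choose $f$ contracting a line $L'$ onto a point $q$ of a fixed line $L$, and an irreducible curve $C$ through $q$, transverse to $L$ and $L'$, which is contracted by $f^{-1}$ onto a fixed point $p\notin\Ind(f)$. Then the strict transforms $f^n_\circ(C)$ have order of tangency with $L$, hence degree, tending to infinity, giving infinitely many distinct elements of $\Hy(\P^2_\bfk)$, while $f^{-m}_\bullet(C)\notin\Hy(\P^2_\bfk)$ for all $m\ge 1$; since an invariant set at finite symmetric difference from $\Hy(\P^2_\bfk)$ would have to contain almost all or almost none of a single $f_\bullet$-orbit, $f$ does not transfix. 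The essential point you are missing is that over $\overline{\F}_p$ the unboundedness must come from tangency/degree growth of strict transforms, not from infinite point orbits.
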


\begin{eg}\label{exp2facile}
Let $\bfk$ be an algebraically closed field that is not algebraic over a finite field. Let $t$ be an element of infinite order in the multiplicative group $\bfk^*$. Then the birational transformation $g$ of $\P^2_\bfk$ given, in affine coordinates, by $(x,y)\mapsto (tx+1,xy)$ does not transfix $\Hy(\P^2_\bfk)$. Indeed, it is easy to show that the hypersurface $C=\{x=0\}$  satisfies, for $n\in\Z$, $f_\bullet^n(C)\in\Hy(\P^2_\bfk)$ if and only if $n\le 0$.
\end{eg}

\begin{eg}\label{p2compli}
Example \ref{exp2facile} works under a small restriction on $\bfk$. Here is an example over an arbitrary algebraically closed field $\bfk$. 
Let $L$ and $L'$ be two lines in~$\P^2_\bfk$ intersecting transversally 
at a point $q$. Let $f$ be a birational transformation of $\P^2_\bfk$ that contracts $L'$ onto $q$ and fixes $L$. For instance, in affine coordinates, the monomial map $(x,y)\mapsto (x,xy)$ contracts the $y$-axis onto
the origin, and fixes the $x$-axis. Assume that there is an open neighborhood $\U$ of $q$ such that $f$ does
not contract any curve in $\U$ except the line $L'$. Let $C$ be an irreducible curve that intersects $L$ and $L'$ transversally at $q$. 
Then, for every $n\geq 1$, the strict transform $f^n_\circ(C)$ is an irreducible curve, and its order of tangency 
with $L$ goes to infinity with $n$. Thus, the degree of $f^n_\circ(C)$ goes to infinity, and the $f^n_\circ(C)$
form an infinite sequence in $\Hy(\P^2_\bfk)$.

Now, assume that $C$ is contracted by $f^{-1}$ onto a point $p$,   $p\notin \Ind(f)$, and  $p$ is fixed by $f^{-1}$. Then,  
for every $m\geq 1$, $f^{-m}_\bullet(C)$ is not in $\Hy(\P^2_\bfk)$. This shows that the orbit of $C$ under the action of $f_\bullet$
intersects $\Hy(\P^2_\bfk)$ and its complement $\Hyp(\P^2_\bfk)\smallsetminus \Hy(\P^2_\bfk)$ on the infinite sets $\{f^n_\circ(C)\, ; \, n\geq 1\}$ and  $\{f^{-m}_\bullet(C)\, ; \, m\geq 1\}$. In particular, $f$ does not transfix $\Hy(\P^2_\bfk)$. 

Since such maps exist over every algebraically closed field $\bfk$, this example shows that property (\ref{cynt}) of Theorem~\ref{thm:fin-gen-transfix}
is satisfied for every rational surface $X$. 
\end{eg}

\begin{proof}
Trivially (\ref{cynt}) implies (\ref{fgnt}). 
Suppose that (\ref{kod21}) holds and let us prove (\ref{cynt}). The case $X=\P^1\times\P^1$ is already covered by Lemma \ref{lem:counter-example}
in characteristic zero, and by the previous example in positive characteristic. The case $X=C\times\P^1$ in characteristic zero, where $C$ is an elliptic curve, is similar. To see it, fix a point $t_0\in C$ and a rational function $\varphi$ on $C$ that vanishes
at $t_0$. Then, since $\bfk$ has characteristic zero, one can find a translation $s$ of $C$ of infinite order such that the orbit $\{s^n(t_0):n\in\Z\}$ does not contain any other zero or pole of $\varphi$ (here we use that the characteristic of $\bfk$ is $0$). Consider the birational transformation 
$f\in \Bir(X)$ given by $f(t,x)=(s(t),\varphi(t)x)$. Let $H$ be the hypersurface $\{t_0\}\times C$. 
Then for $n\in\Z$, we have $(f_\bullet)^nH\in\Hy(X)$ if and only if $n\le 0$. Hence the action of the cyclic group~$\langle f\rangle$ does not 
transfix $\Hy(X)$.

Let us now prove that (\ref{fgnt}) implies (\ref{kod21}). Applying Theorem \ref{birtran}, and changing $X$ to a birationally equivalent surface if necessary, we assume that $X=C\times\mathbb{P}^1$ for some (smooth irreducible) curve $C$. We may now assume that the genus of $C$ is $\ge 2$, or $\ge 1$ in positive characteristic, and we have to show that every finitely generated group $\Gamma$ of $\Bir(X)$ transfixes $\Hy(X)$. 
Since the genus of $C$ is $\geq 1$,  the group $\Bir(X)$ preserves the fibration $X\to C$; this gives a surjective homomorphism 
$\Bir(X)\to\Aut(C)$. Now let us fully use the assumption on $C$: if its genus is $\ge 2$, then $\Aut(C)$ is finite; if its genus is 1 and $\bfk$ has positive characteristic, then $\Aut(C)$ is locally finite (every finitely generated subgroup is finite), and in particular the projection of $\Gamma$ on $\Aut(C)$ has a finite image.
Thus the kernel of this homomorphism intersects $\Gamma$
in a finite index subgroup $\Gamma_0$. 
It now suffices to show that $\Gamma_0$ transfixes $\Hy(X)$.
Every $f\in\Gamma_0$ has the form $f(t,x)= (t,\varphi_t(x))$ for some rational map $t\mapsto\varphi_t$ from $C$ to $\PGL_2$; 
define $U_f\subset  C$ as the open and dense subset on which $\varphi_\gamma$ is regular: by definition, $f$ restricts
to an automorphism of $U_f\times \P^1$.
Let $S$ be a finite generating subset of $\Gamma_0$, and let $U_S$ be the intersection of the open subsets $U_g$, for $g\in S$. 
Then $\Gamma_0$ acts by automorphisms on $U_S\times \P^1$ and its action on $\Hy(X)$ fixes the subset $\Hy(U_S)$.
Hence $\Gamma$ transfixes $\Hy(X)$.
\end{proof}

It would be interesting to obtain characterizations of the same properties in dimension 3 (see Question \ref{ruled3}).

\subsection{Transfixing Jonqui\`eres twists}\label{par:AppendixII}
Let $X$ be an irreducible normal projective surface and $\pi$ a morphism onto a smooth projective curve $C$
with connected rational fibers. 
Let $\Bir_\pi(X)$ be the subgroup of $\Bir(X)$ permuting the fibers of $\pi$. Since $C$ is a smooth projective curve, the group $\Bir(C)$ coincides with $\Aut(C)$ and we get a canonical homomorphism ${\mathrm{r}}_C\colon \Bir_\pi(X)\to\Aut(C)$.

The main examples to keep in mind  are provided by $\P^1\times \P^1$, Hirzebruch surfaces, and $C\times \P^1$ for
some genus~$1$ curve $C$, $\pi$ being the first projection.

Let $\Hy_\pi(X)$ denote the set of irreducible curves which are contained in fibers of $\pi$, and define $\Hyp_\pi(X)=\Hy_\pi(X)\sqcup (\Hyp(X)\smallsetminus\Hy(X))$, so that
$\Hyp(X)=\Hyp_\pi(X)\sqcup (\Hy(X)\smallsetminus\Hy_\pi(X)).$ An irreducible curve $H\subset X$ is an element of $\Hy(X)\smallsetminus\Hy_\pi(X)$ if and only if its projection $\pi(H)$ coincides with $C$; this curves are said to be transverse to $\pi$.

\begin{pro}\label{decomfi} 
The  decomposition $\Hyp(X)=\Hyp_\pi(X)\sqcup (\Hy(X)\smallsetminus\Hy_\pi(X))$ is $\Bir_\pi(X)$-invariant.
\end{pro}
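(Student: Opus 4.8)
The plan is to show that the two types of curves in the decomposition $\Hyp(X)=\Hyp_\pi(X)\sqcup(\Hy(X)\smallsetminus\Hy_\pi(X))$ are preserved by every $f\in\Bir_\pi(X)$, by tracking what happens to a curve under the action $f_\bullet$ on $\Hyp(X)$. Recall that $\Hy(X)\smallsetminus\Hy_\pi(X)$ consists exactly of the irreducible curves $H$ with $\pi(H)=C$ (the \emph{transverse} curves), while $\Hyp_\pi(X)$ is built from the curves contained in fibers together with all of $\Hyp(X)\smallsetminus\Hy(X)$. Since $f$ permutes the fibers of $\pi$, there is $\bar f={\mathrm{r}}_C(f)\in\Aut(C)$ with $\pi\circ f=\bar f\circ\pi$ as rational maps, and this relation is the key compatibility I will exploit throughout.

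First I would treat the transverse curves. Let $H\in\Hy(X)\smallsetminus\Hy_\pi(X)$, so $\pi(H)=C$. Because $\bar f$ is an automorphism of $C$, the relation $\pi\circ f=\bar f\circ\pi$ forces the strict transform $f_\circ(H)$ to again dominate $C$: away from the finitely many indeterminacy points, $f$ sends $H$ into the fiber structure compatibly with $\bar f$, so the generic point of $H$ has image with $\pi$-image filling out $C$. In particular $f_\circ(H)$ is an honest irreducible curve (not contracted) whose projection is all of $C$, hence $f_\bullet(H)=f_\circ(H)\in\Hy(X)\smallsetminus\Hy_\pi(X)$ by Proposition~\ref{pro:contraction-hyp-estimate}. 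The point to verify carefully is that a transverse curve cannot be contracted by an element of $\Bir_\pi(X)$: if $f$ contracted $H$, then $\pi(f_\circ(H))$ would be a point, contradicting $\pi\circ f=\bar f\circ\pi$ and $\pi(H)=C$ with $\bar f$ bijective. This shows $f_\bullet$ maps $\Hy(X)\smallsetminus\Hy_\pi(X)$ into itself; applying the same argument to $f^{-1}$ (which also lies in $\Bir_\pi(X)$, with ${\mathrm{r}}_C(f^{-1})=\bar f^{-1}$) gives the reverse inclusion, so this subset is $f_\bullet$-invariant.

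Since $\Hyp(X)$ is the disjoint union of $\Hyp_\pi(X)$ and $\Hy(X)\smallsetminus\Hy_\pi(X)$, and $f_\bullet$ is a bijection of $\Hyp(X)$ preserving the transverse part, it automatically preserves the complementary part $\Hyp_\pi(X)$ as well. This yields the $\Bir_\pi(X)$-invariance of the whole decomposition. The main obstacle I anticipate is the contraction analysis in the previous paragraph: one must rule out that a transverse curve degenerates into a fiber-contained curve or gets contracted, and dually that a fiber curve or an exotic element of $\Hyp(X)\smallsetminus\Hy(X)$ never lands among the transverse curves. Both directions follow cleanly from the intertwining $\pi\circ f=\bar f\circ\pi$ with $\bar f\in\Aut(C)$, but making the statement about strict transforms rigorous (rather than about the generic behavior of $f$) is where the careful bookkeeping with $\Hyp$ and Proposition~\ref{pro:contraction-hyp-estimate} is needed.
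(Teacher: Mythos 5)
Your proof is correct and follows essentially the same route as the paper's: the key point in both is that the intertwining $\pi\circ f=\bar f\circ\pi$ with $\bar f\in\Aut(C)$ prevents any transverse curve from being contracted, so $f_\bullet$ maps transverse curves to transverse curves in $\Hy(X)$, and invariance of the complementary set $\Hyp_\pi(X)$ then follows because $f_\bullet$ is a bijection of $\Hyp(X)$. Your version merely spells out the details (the explicit use of $f^{-1}$ and of Proposition~\ref{pro:contraction-hyp-estimate}) that the paper leaves implicit.
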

\begin{proof}
Let $H\subset X$ be an irreducible curve which is transverse to $\pi$. Since $\Bir_\pi(X)$ acts by automorphisms on $C$, 
$H$ can not be contracted by any element of $\Bir_\pi(X)$; more precisely, for every $g\in \Bir_\pi(X)$, $g_\bullet(H)$ is
an element of $\Hy(X)$ which is transverse to $\pi$. Thus the set of transverse curves is $\Bir_\pi(X)$-invariant.
\end{proof}

This proposition and the proof of Theorem~\ref{thm:fin-gen-transfix} lead to the following corollary.

\begin{cor}\label{cortran}
Let $G$ be a subgroup of $\Bir_\pi(X)$. If $\pi$ maps the set of indeterminacy points of the 
elements of $G$ into a finite subset of $C$, then $G$ transfixes $\Hy(X)$. 
\end{cor}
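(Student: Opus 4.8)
The plan is to reduce the statement to the transfixing argument already developed for the group $\Gamma_0$ in the proof of Theorem~\ref{thm:fin-gen-transfix}. First I would observe that, by Proposition~\ref{decomfi}, the decomposition $\Hyp(X)=\Hyp_\pi(X)\sqcup(\Hy(X)\smallsetminus\Hy_\pi(X))$ is $\Bir_\pi(X)$-invariant, hence $G$-invariant. Since $G\subset\Bir_\pi(X)$ never contracts a transverse curve and sends transverse curves to transverse curves, the set $\Hy(X)\smallsetminus\Hy_\pi(X)$ of transverse curves is already $G$-invariant as a subset of $\Hyp(X)$. Therefore $G$ transfixes $\Hy(X)$ if and only if it transfixes the ``vertical'' part $\Hy_\pi(X)$ inside $\Hyp_\pi(X)$; I would isolate this reduction as the first step, so that only the behaviour on fiber components remains to be controlled.

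The second step is to exploit the hypothesis that $\pi$ maps the indeterminacy points of all elements of $G$ into a finite subset $F\subset C$. The key point is that an irreducible curve $H$ contained in a fiber $\pi^{-1}(c)$ with $c\notin F$ cannot be created or destroyed by an element $g\in G$: since $g$ restricts to an automorphism of $\pi^{-1}(U)\subset X$ for $U=C\smallsetminus\pi(\Ind(g))$, and $c$ lies in this $U$ by the finiteness hypothesis, the strict transform $g_\bullet(H)$ is again an irreducible curve in the fiber over ${\mathrm{r}}_C(g)(c)$, which is again outside a controlled finite set. Thus I would define $B$ to be the set of all irreducible vertical curves lying in fibers over $C\smallsetminus F'$, where $F'$ is the ${\mathrm{r}}_C(G)$-orbit of $F$ together with the (finitely many) base points of $\pi$; one then checks $B$ is $G$-invariant.

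The third step is to verify that $B\triangle\Hy(X)$ is finite, i.e.\ that only finitely many vertical curves lie over the exceptional locus $F'$. Here I must use that ${\mathrm{r}}_C(G)\subset\Aut(C)$ keeps the orbit of the finite set $F$ finite: either because $\Aut(C)$ is itself finite (genus $\ge 2$) or locally finite in positive characteristic for genus $1$, or more directly because the relevant subgroup of $G$ has image of bounded orbit on $C$. Over each of the finitely many points of $F'$ there are only finitely many irreducible components in the fiber of $\pi$ (the fibers being curves), so the symmetric difference is finite. Combining with the invariance of the transverse part from the first step, $B$ together with the transverse curves forms a $G$-invariant subset of $\Hyp(X)$ differing from $\Hy(X)$ by a finite set, which is exactly transfixing.

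The main obstacle I anticipate is controlling the set $F'$: one needs the orbit of the indeterminacy projections under the $\Aut(C)$-action of $G$ to remain finite, and this is precisely where the genus hypothesis implicit in the theorem's applicability (via the finiteness or local finiteness of $\Aut(C)$) must be invoked. If $\Aut(C)$ were infinite and acted with infinite orbits on $F$, the vertical curves over the orbit could form an infinite symmetric difference and transfixing would fail; so the delicate part is pinning down why the hypothesis ``$\pi$ maps the indeterminacy points into a finite subset of $C$'' already forces the $G$-orbit of that subset to stay finite, which one extracts from the structure of $\Bir_\pi(X)$ exactly as in the treatment of $\Gamma_0$ in Theorem~\ref{thm:fin-gen-transfix}.
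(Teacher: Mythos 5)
Your reduction via Proposition~\ref{decomfi} (step 1) is exactly right, but steps 2--3 contain a genuine gap, which you flag yourself without closing: your invariant set $B$ is commensurate to $\Hy(X)$ only if the ${\mathrm{r}}_C(G)$-orbit $F'$ of $F$ is finite, and nothing in the hypothesis gives you that. The justifications you propose are not available in this setting: the corollary carries no genus assumption, and its main use in the paper is precisely for rational fibrations over $C=\P^1_\bfk$ (Proposition~\ref{protrahy}, Example~\ref{gatransfix}), where $\Aut(C)=\PGL_2(\bfk)$ and ${\mathrm{r}}_C(G)$ is typically an infinite group with infinite orbits on $\P^1$ (e.g.\ ${\mathrm{r}}_C(f_a)\colon x\mapsto ax$); and the treatment of $\Gamma_0$ in Theorem~\ref{thm:fin-gen-transfix} concerns the kernel of ${\mathrm{r}}_C$ in a situation where the image in $\Aut(C)$ is finite for genus reasons, so it gives no control here. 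Worse, the claim that the hypothesis forces finiteness of the relevant orbit is essentially Proposition~\ref{protrahy}, which the paper proves \emph{after} this corollary, using it, and only under an algebraic stability assumption; so filling the gap that way is circular at best. There is also a local error in step 2: $g$ does not restrict to an automorphism of $\pi^{-1}(U)$ for $U=C\smallsetminus\pi(\Ind(g))$ --- it is a morphism there, but it need not map that open set into itself, and it can still contract components of reducible fibers lying over $U$ (only the \emph{image point} of a contracted curve, which belongs to $\Ind(g^{-1})$, must lie over $F$; the contracted curve itself need not).

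The orbit issue disappears if, instead of exhibiting an explicit invariant set, you use the criterion recalled in Section~\ref{par:FW}: a commensurated subset is transfixed if and only if the function $g\mapsto \#\bigl(\Hy(X)\triangle g_\bullet\Hy(X)\bigr)$ is bounded on $G$, and by Theorem~\ref{thm:def-action-hypX} this number equals $\exc(g)+\exc(g^{-1})$. Every curve contracted by $g\in G$ is a fiber component (Proposition~\ref{decomfi}), and since its image is a point of $\Ind(g^{-1})$, it lies in a fiber $\pi^{-1}(c)$ with ${\mathrm{r}}_C(g)(c)\in\pi(\Ind(g^{-1}))\subset F$; hence the curves contracted by $g$ occupy at most $\#F$ fibers. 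As all but finitely many fibers of $\pi$ are irreducible, there is a constant $N$, depending only on $(X,\pi)$, bounding the number of irreducible components of any fiber, so $\exc(g)\le N\,\#F$ for every $g\in G$, and likewise for $g^{-1}$. The commensurating function is thus bounded by $2N\,\#F$, and $G$ transfixes $\Hy(X)$. This counting argument is what combining Proposition~\ref{decomfi} with the mechanism behind Theorem~\ref{thm:fin-gen-transfix} actually yields, and it requires no control whatsoever on the ${\mathrm{r}}_C(G)$-orbits; your steps 1 and 2 (suitably corrected) survive, but step 3 should be replaced by this uniform bound.
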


In the case of cyclic subgroups, we establish a converse under the mild assumption of algebraic stability. Recall that a
birational transformation $f$ of a smooth projective surface is {\bf{algebraically stable}} if the forward orbit 
of $\Ind(f^{-1})$ does not intersect $\Ind(f)$. 
By \cite{Diller-Favre},  given any birational 
transformation $f$ of a surface $X$, there is a birational morphism $u\colon Y\to X$, with $Y$ a smooth projective
surface, such that $f_Y:=u^{-1}\circ f\circ u$ is algebraically stable.  If $\pi\colon X\to C$ is a fibration, as above, and 
$f$ is in $\Bir_\pi(X)$, then $f_Y$ preserves the fibration $\pi\circ u$.
Thus, we may always assume that $X$ is smooth and $f$ is algebraically stable after a birational conjugacy. 

\begin{pro}\label{protrahy}
Let $X$ be a smooth projective surface, and $\pi\colon X\to C$ a rational fibration. 
If $f\in \Bir_\pi(X)$ is algebraically stable, then $f$ transfixes $\Hy(X)$ if, and only if the orbit of $\pi(\Ind(f))$ under
the action of ${\mathrm{r}}_C(f)$ is finite. \qed
\end{pro}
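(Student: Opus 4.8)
The plan is to reduce the statement to a counting problem about contracted vertical curves and then to treat the two implications separately, the converse being the delicate one.

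First I would reduce to fiber components. By Proposition~\ref{decomfi} the decomposition $\Hyp(X)=\Hyp_\pi(X)\sqcup(\Hy(X)\smallsetminus\Hy_\pi(X))$ is $\Bir_\pi(X)$-invariant, and no transverse curve is ever contracted by an element of $\Bir_\pi(X)$; hence the transverse part $\Hy(X)\smallsetminus\Hy_\pi(X)$ is an $f$-invariant subset of $\Hy(X)$ that contributes nothing to the relevant symmetric differences. Consequently $\langle f\rangle$ transfixes $\Hy(X)$ if and only if it transfixes $\Hy_\pi(X)$ inside $\Hyp_\pi(X)$, and every curve contracted by a power of $f$ is vertical. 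Writing $\bar f=\mathrm r_C(f)$, Theorem~\ref{thm:def-action-hypX} gives $\#\big(f^n_\bullet(\Hy(X))\triangle\Hy(X)\big)=\exc(f^n)+\exc(f^{-n})$, so $\langle f\rangle$ transfixes $\Hy(X)$ if and only if $\sup_{n\in\Z}\exc(f^n)<\infty$. Finally, since a curve contracted by $f$ lies over $\pi(\Ind(f))$ and is sent to a point of $\Ind(f^{-1})$, one checks the equivariant relation $\pi(\Ind(f^{-1}))=\bar f\big(\pi(\Ind(f))\big)$; in particular the finiteness of the $\bar f$-orbit of $\pi(\Ind(f))$ is unchanged if $f$ is replaced by $f^{-1}$.

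For the direct implication I would invoke Corollary~\ref{cortran}. The general inclusion $\Ind(f^n)\subseteq\bigcup_{k=0}^{n-1}f^{-k}(\Ind(f))$, together with $\pi$-equivariance, yields $\pi(\Ind(f^n))\subseteq\bigcup_{k=0}^{n-1}\bar f^{-k}\big(\pi(\Ind(f))\big)$, and the analogous statement for the negative powers. Thus, if the $\bar f$-orbit of $\pi(\Ind(f))$ is finite, then $\pi$ maps $\bigcup_{n\in\Z}\Ind(f^n)$, that is the set of indeterminacy points of all elements of $G=\langle f\rangle$, into a finite subset of $C$; Corollary~\ref{cortran} then shows that $\langle f\rangle$ transfixes $\Hy(X)$.

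For the converse I would manufacture infinitely many contracted curves. Assume the orbit of $\pi(\Ind(f))$ is infinite; then some $a\in\pi(\Ind(f))$ has infinite orbit, and since $\bar f$ is bijective its forward orbit $\{\bar f^{\,j}(a):j\ge 0\}$ is infinite. Using that the curves contracted by $f^{-1}$ lie precisely over $\bar f\big(\pi(\Ind(f))\big)$, I choose an irreducible curve $W\in\Exc(f^{-1})$ over $\bar f(a)$ and set $V_j=f^j_\circ(W)$, a vertical curve over $\bar f^{\,j+1}(a)$. As $f^{-1}$ contracts $W$, the curve $V_j$ is contracted by $f^{-(j+1)}$, and the base points $\bar f^{\,j+1}(a)$ being pairwise distinct, the $V_j$ are pairwise distinct; this forces $\exc(f^{-n})\ge n$, hence $\sup_n\exc(f^n)=\infty$, so $\langle f\rangle$ does not transfix $\Hy(X)$. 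The one point that must be secured — and which I expect to be the main obstacle — is that the forward orbit $V_0,V_1,\dots$ never degenerates, i.e. that $f$ contracts no $V_j$ before step $j+1$ (equivalently $f^{-j}_\circ(V_j)=W$). This persistence is exactly what algebraic stability provides: by \cite{Diller-Favre}, algebraic stability of $f$ means that the forward orbit of $\Ind(f^{-1})$ avoids $\Ind(f)$, which prevents the forward images of an exceptional curve of $f^{-1}$ from ever meeting the indeterminacy locus of $f$, so that no $V_j$ is contracted prematurely. Translating this point-orbit condition into the non-contraction of the curves $V_j$ is the technical heart of the argument.
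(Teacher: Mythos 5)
Your reduction to vertical curves, the bounded-symmetric-difference reformulation of transfixing, and the direct implication via Corollary~\ref{cortran} are correct and coincide with the paper's argument. The converse, however, has a genuine gap, and it sits exactly where you invoke algebraic stability. Stability is a condition on orbits of \emph{points} (the forward orbit of $\Ind(f^{-1})$ avoids $\Ind(f)$); it does \emph{not} prevent your curves $V_j$ from being contracted, since a curve of $\Exc(f^{-1})$ may perfectly well also lie in $\Exc(f)$. Concretely, $f(x,y)=(x+1,\,x(x-1)y)$ on $\P^1_\bfk\times\P^1_\bfk$ is algebraically stable (the forward orbits of $\Ind(f^{-1})=\{(1,0),(2,0),(\infty,\infty)\}$ stay in $\{y=0\}\cup\{(\infty,\infty)\}$, disjoint from $\Ind(f)=\{(0,\infty),(1,\infty),(\infty,0)\}$), and $a=0\in\pi(\Ind(f))$ has infinite orbit; but then your $W$ is the fiber over $\bar{f}(0)=1$, which $f$ contracts at the very first step, so $V_1$ is a point and the construction collapses. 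The cure is combinatorial, not dynamical: since the orbit of $a$ is infinite and $\pi(\Ind(f))$ is finite, replace $a$ by the \emph{last} point of its orbit lying in $\pi(\Ind(f))$; then $\bar{f}^{\,j}(a)\notin\pi(\Ind(f))$ for all $j\ge 1$, so no $V_j$ is contracted, because contracted fibers lie over $\pi(\Ind(f))$. (The paper's proof, which starts from an arbitrary contracted fiber, is also silent on this selection.)

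Second, and more seriously, your final step ``$V_j\in\Exc(f^{-(j+1)})$ and the $V_j$ are distinct, hence $\exc(f^{-n})\ge n$'' assumes that a curve contracted by $f^{-(j+1)}$ is still contracted by $f^{-n}$ for every $n>j+1$. That inference is false in general: the point onto which $V_j$ is contracted can be blown back up by a later application of $f^{-1}$. Take $g(x,y)=(x+1,\tfrac{x}{x+1}y)$, the conjugate of the automorphism $(x,y)\mapsto(x+1,y)$ by $(x,y)\mapsto(x,xy)$: one computes $g^{-n}(x,y)=(x-n,\tfrac{x}{x-n}y)$, so the fiber over $n$ lies in $\Exc(g^{-n})$ for every $n\ge 1$ and these fibers are pairwise distinct, yet $\exc(g^{-n})=2$ for all $n$ and $g$ transfixes $\Hy(X)$ (of course $g$ is not algebraically stable, which is why this does not contradict the Proposition -- but it shows your counting step is not free). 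This ``once contracted, always contracted'' step is precisely what algebraic stability buys and is exactly how the paper uses it: the point $p=f^{-1}_\circ(W)$ lies in $\Ind(f)$, and stability (equivalently, stability of $f^{-1}$: the $f^{-1}$-orbit of $\Ind(f)$ avoids $\Ind(f^{-1})$) guarantees that $f^{-k}(p)$ is a well-defined point outside $\Ind(f^{-1})$ for all $k\ge 0$, whence $f^{-n}_\circ(V_j)=f^{-(n-j-1)}(p)$ is a point and $V_j\in\Exc(f^{-n})$ for all $n\ge j+1$. So your proposal misallocates the key hypothesis: it spends stability on a step it cannot justify and omits it from the step that cannot do without it. With these two repairs your count $\exc(f^{-n})\ge n$ becomes correct, and the resulting argument is essentially the paper's proof (which pushes a curve contracted by $f$ forward and uses stability to keep its images inside $\Hyp(X)\smallsetminus\Hy(X)$), recast quantitatively for $f^{-1}$.
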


For $X=\mathbb{P}^1\times\mathbb{P}^1$, the reader can check (e.g., conjugating a suitable automorphism) 
that the proposition fails without the algebraic stability assumption.

\begin{proof} Denote by $A\subset \Aut(C)$ the subgroup generated by ${\mathrm{r}}_C(f)$.
 Consider a fiber $F\simeq \P^1$ which is contracted to a point $q$ by $f$. 
Then, there is a unique indeterminacy point $p$ of $f$ on $F$. 
If the orbit of $\pi(q)$ under the action of $A$ is infinite,  the orbit of $q$ under the action of $f$ is infinite too. 
Set $q_n=f^{n-1}(q)$ for $n\geq 1$ (so that $q_1=q$); this sequence of points is well defined because 
$f$ is algebraically stable: for every $n\geq 1$, $f$ is a local isomorphism from a neighborhood of $q_n$
to a neighborhood of $q_{n+1}$. Then, the image of $F$ in $\Hyp(X)$ under the action of 
$f^n$ is an element of $\Hyp(X)\smallsetminus \Hy(X)$: it is obtained by 
a finite number of blow-ups above $q_n$.  Since the points $q_n$  
form an infinite set, the images of $F$ form an infinite subset of $\Hyp(X)\smallsetminus \Hy(X)$. 
Together with the previous corollary, this argument proves the proposition. \end{proof}

\begin{eg} \label{gatransfix}Consider $X=\P^1\times\P^1$, with $\pi(x,y)=x$  (using affine coordinates).
Start with $f_a(x,y)=(ax,xy)$, for some non-zero parameter $a\in \bfk$. 
The action of ${\mathrm{r}}_C(f_a)$ on  $C=\P^1$ fixes the images $0$ and $\infty$ of the indeterminacy points 
of $f_a$. Thus, $f_a$ transfixes $\Hyp(X)$ by Corollary \ref{cortran}. Now, consider $g_a(x,y)=(a x, (x+1)y)$. 
Then, the orbit of $-1$ under multiplication by $a$ is finite if and only if $a$ is a root of unity; thus, if $a$ is not
a root of unity, $g_a$ does not transfix $\Hy(X)$.  Section~\ref{par:example-intro6}
provides more examples of that kind. \end{eg}

\section{Birational transformations of surfaces I}

From now on, we work in dimension $2$. We shall repeatedly use two specific features of surfaces. 
First, the resolution of singularities is available in all
characteristics, so that we can always assume the varieties to be smooth. Hence $X$, $Y$, and $Z$ will be  smooth projective surfaces over 
the algebraically closed field $\bfk$. Second, smooth rational curves of self-intersection $-1$, also called exceptional 
curves of the first kind or $(-1)$-curves, can be blown down onto a smooth point. And if a curve is contracted by a birational 
morphism $\pi\colon Y\to X$, then the contraction can be down by successively contracting $(-1)$-curves.

\subsection{Regularization}
In this section, we refine Theorem~\ref{thm:FW-pseudo-regularization}, in order to apply results of 
Danilov and Gizatullin. Recall that a curve $C$ in a smooth surface $Y$ has {\bf normal crossings}
if each of its singularities is a simple node with two transverse tangents. 
In the complex case, this means that $C$ is locally analytically equivalent to 
$\{xy=0\}$ (two branches intersecting transversally) in an analytic neighborhood of each of its singularities.

\begin{thm}\label{thm:FW-regularization-surfaces}
Let $X$ be a smooth projective surface, defined over an algebraically closed field $\bfk$. Let
$\Gamma$ be a subgroup of $\Bir(X)$ that transfixes the subset $\Hy(X)$ of $\Hyp(X)$. 
There exists a smooth projective surface $Z$, a birational map $\varphi\colon Z\dasharrow X$ and 
a dense open subset $\U\subset Z$ such that, writing the boundary $\partial Z:= Z\smallsetminus \U$ 
as a finite union of irreducible components $E_i\subset Z$, $1\leq i\leq \ell$, the following properties hold:
\begin{enumerate}
\item\label{mc2} The boundary $\partial Z$ is a curve with normal crossings.
\item\label{mc3} The subgroup $\Gamma_Z:=\varphi^{-1}\circ \Gamma\circ \varphi\subset\Bir(Z)$ acts by automorphisms on the open subset $\U$.
\item\label{mc4} For all $i\in\{1,\dots,\ell\}$ and $g\in \Gamma_Z$, the strict transform of $E_i$ under the action of $g$ on $Z$ is contained in $\partial Z$: either 
$g_\circ(E_i)$ is a point of $\partial Z$ or $g_\circ(E_i)$ is an irreducible component $E_j$ of $\partial Z$. 
\item\label{mc5} For all $i\in\{1,\dots,\ell\}$, there exists an element $g\in \Gamma_Z$ that contracts $E_i$ to a point $g_\circ(E_i)\in \partial Z$. In particular, 
$E_i$ is a rational curve.
\item\label{mc6} The  pair $(Z, \U)$ is minimal for the previous properties, in the following sense: if one contracts a smooth curve of self-intersection $-1$ in $\partial Z$, then 
the boundary stops to be a normal crossing divisor.
\end{enumerate}
\end{thm}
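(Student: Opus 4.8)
The plan is to start from the pseudo-regularization already available and then correct it by blow-ups until the action is regular with a normal-crossings boundary. Since $\Gamma$ transfixes $\Hy(X)$, Theorem~\ref{thm:FW-pseudo-regularization} makes $\Gamma$ pseudo-regularizable, and the discussion preceding Theorem~\ref{thm:FW-pseudo-regularization} (together with resolution of singularities, available in all characteristics in dimension $2$) lets me assume $X$ is smooth, that $A=\Hy(X)\smallsetminus\{E_1,\dots,E_\ell\}$ is $\Gamma$-invariant in $\Hyp(X)$, and that $\Gamma$ acts by pseudo-automorphisms on $\U_0=X\smallsetminus\bigcup_i E_i$ (Lemma~\ref{lem:pseudo-on-U}). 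I record two consequences of transfixing. First, by the characterization in Section~\ref{par:FW} the function $g\mapsto\#(\Hy(X)\triangle g_\bullet\Hy(X))$ is bounded, so by Theorem~\ref{thm:def-action-hypX} there is an integer $N$ with $\exc(g)+\exc(g^{-1})\le N$ for all $g\in\Gamma$. Second, because $A$ is \emph{exactly} invariant, every hypersurface contracted or created by any $g\in\Gamma$ lies in the finite set $\{E_1,\dots,E_\ell\}$: indeed a contracted $C$ satisfies $g_\bullet(C)\in\Hyp(X)\smallsetminus\Hy(X)$, whence $C\notin A$.

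The heart of the argument, and the step I expect to be hardest, is to upgrade the pseudo-automorphism action on $\U_0$ to a genuine automorphism action after finitely many blow-ups. A pseudo-automorphism of a smooth surface is an isomorphism away from finitely many points, and here those points are exactly the interior indeterminacy points, that is, points $p\in\U_0$ at which some $g$ creates a boundary curve, equivalently points of the form $g_\circ(E_i)\in\U_0$ with $E_i$ contracted by $g$. I would show that the set of all such points is finite. The key observation is that two contraction images $g_\circ(E_i)$ and $h_\circ(E_i)$ of the \emph{same} $E_i$ are joined by the element $hg^{-1}$, which is a local isomorphism carrying one to the other, so these images fall into finitely many $\Gamma$-orbits; the uniform bound $\exc\le N$ then prevents a single $E_i$ from being contracted to infinitely many distinct points (such a configuration forces the contracting elements to have unbounded exceptional number, exactly as in Lemma~\ref{lem:counter-example}). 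Blowing up these finitely many points, and iterating the resulting bounded indeterminacy resolution, produces a model on which $\Gamma$ acts by automorphisms on a dense open $\U$; this yields~(\ref{mc3}), and since strict transforms of boundary components are again boundary components or are contracted to boundary points, it also yields~(\ref{mc4}).

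Next I would make the boundary a normal-crossings divisor. The group $\Aut(\U)$ permutes the boundary components and hence permutes the finite set of singular points of $\partial Z$; blowing up these singular points and separating tangencies is a canonical log-resolution, so the $\Gamma$-action lifts to automorphisms of the complement, the new exceptional curves are added to the boundary, and properties~(\ref{mc3}) and~(\ref{mc4}) are preserved while~(\ref{mc2}) is obtained. To secure~(\ref{mc5}) I would discard from the boundary every component that is never contracted by $\Gamma$ and onto which no component is contracted, moving it back into $\U$; regularity is preserved because the action on such components is by isomorphisms. Each surviving boundary component is then contracted by some $g$ to a point of $\partial Z$, and, being the image of an irreducible curve under a birational contraction of smooth surfaces, it is rational, which gives~(\ref{mc5}).

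Finally, for minimality~(\ref{mc6}) I would repeatedly contract smooth $(-1)$-curves contained in $\partial Z$ as long as the image boundary remains a normal-crossings divisor and the conjugated action stays regular on the corresponding open set. Each such contraction lowers the Picard number of $Z$ by one, so the process terminates, and the resulting pair $(Z,\U)$ satisfies the minimality condition of~(\ref{mc6}) by construction. Assembling these steps gives a single model $Z$ on which properties~(\ref{mc2})--(\ref{mc6}) hold simultaneously; the only delicate point requiring genuine new input is the finiteness in the second paragraph, where transfixing (through the bound on $\exc$) is essential, and without which, as Lemma~\ref{lem:counter-example} shows, no such regularization can exist.
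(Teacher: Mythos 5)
Your overall skeleton (pseudo-regularize via Theorem~\ref{thm:FW-pseudo-regularization}, then correct the model, then arrange normal crossings and minimality) matches the paper's, but the step you yourself single out as the heart of the argument contains a genuine gap: the set of interior contraction points $\{g_\circ(E_i)\in\U_0 :\, g\in\Gamma,\ 1\le i\le\ell\}$ need \emph{not} be finite, so no finite sequence of blow-ups of such points exists. Concretely, let $X'$ be the blow-up of $\P^1_\bfk\times\P^1_\bfk$ at the point $p=(1,1)$ of the torus $\mathbb{G}_m\times\mathbb{G}_m$, let $E$ be the exceptional curve, and let $\Gamma$ be the lift to $X'$ of the group of multiplicative translations $(x,y)\mapsto(sx,ty)$. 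Each translation is an automorphism of $\P^1_\bfk\times\P^1_\bfk$, so its lift $g$ contracts exactly one curve, namely $E$, onto the point $(s,t)$, and $g^{-1}$ likewise contracts only $E$; hence $\Hy(X')\smallsetminus\{E\}$ is $\Gamma$-invariant (so $\Gamma$ transfixes $\Hy(X')$, which is the only hypothesis of the theorem) and $\exc(g)+\exc(g^{-1})\le 2$ for every $g\in\Gamma$. Yet $E$ is contracted onto every point $(s,t)\neq p$ of the torus: the interior contraction points form an infinite, Zariski-dense subset of $\U_0=X'\smallsetminus E$. This pinpoints where your deduction breaks: the bound on $\exc$ controls the number of curves contracted by each \emph{single} element, not the number of distinct image points as the element ranges over $\Gamma$; and the analogy with Lemma~\ref{lem:counter-example} is inverted, since what is unbounded there is the number of distinct contracted curves $C_i$ (precisely what transfixing rules out), not the number of image points of one curve. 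Your orbit observation (contraction images of a fixed boundary component are permuted by elements of $\Gamma$ acting as local isomorphisms) is correct, but orbits are infinite in general, as the example shows.

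The paper resolves this by going in the opposite direction: rather than blowing up the image points, it blows \emph{down} the offending boundary components. If $g\in\Gamma_0$ contracts boundary curves onto a point $q\in\U_0$, then the union $D$ of the curves $E_j$ with $g_\circ(E_j)=q$ is disjoint from $\Ind(g)$ and is a whole connected component of $\partial Y_0$ (otherwise $g$ would map some $E_j$ to a curve meeting $\U_0$), and $g$ itself provides the local model showing that $D$ contracts to a smooth point; the resulting birational morphism $\pi_1\colon Y_0\to Y_1$ yields a new model in which the open set $\U_1=\pi_1(\U_0)\cup\{q_1\}$ is \emph{larger} and the boundary has strictly fewer irreducible components. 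In the example above this amounts to contracting $E$ back, recovering $\P^1_\bfk\times\P^1_\bfk$ with $\Gamma$ acting by automorphisms. Since the number of boundary components strictly decreases at each such step, the process terminates --- this is the termination argument your proposal lacks. Your remaining steps (normal crossings by blowing up boundary singularities, absorbing never-contracted components into $\U$, contracting $(-1)$-curves for minimality) are essentially the paper's Steps 1, 3 and 4 and are fine, but they cannot be carried out until the regularization step is repaired.
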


Before starting the proof, note that the boundary $\partial Z$ may a priori contain an irreducible rational 
curve $E$ with a node. 

\begin{proof}
We apply Theorem~\ref{thm:FW-pseudo-regularization}, 
and get a smooth surface $Y_0$, a birational morphism $\varphi_0\colon Y_0\to X$, and an open subset
$\U_0$ of $Y_0$ such that  
$\Gamma_0:=\varphi_0^{-1}\circ \Gamma\circ \varphi_0$ acts by pseudo-automorphisms
on $\U_0$ and $\partial Y_0:=Y_0\setminus \U_0$ is a curve. The action of $\Gamma_0$ on $\U_0$ is not yet  by automorphisms;
we shall progressively modify the triple $(Y_0, \U_0, \varphi_0)$ to obtain a surface $Z$ with properties (\ref{mc2}) to (\ref{mc6}). 

\vspace{0.1cm}
{\bf{Step 1.--}} First, we blow-up the singularities of the curve $\partial Y_0$ which are not simple nodes to get a boundary that is 
a normal crossing divisor. This replaces the surface $Y_0$ by a new one, still denoted $Y_0$. This modification adds new 
components to the boundary $\partial Y_0$ but  does not change the fact that $\Gamma_0$ acts by pseudo-automorphisms on $\U_0$. 
Let $\ell_0$ be the number of irreducible components of $Y_0\smallsetminus\U_0$. 

\vspace{0.1cm}
{\bf{Step 2.--}}  Consider a point $q$ in $\U_0$, and assume that there is a curve $E_i$ of $\partial Y_0$ 
that is contracted to $q$ by an element $g\in \Gamma_0$; fix such a $g$, and denote by $D$ the union of 
the curves $E_j$ such that $g_\circ(E_j)=q$. By construction, $g$ is a pseudo-automorphism of~$\U_0$. 
The curve $D$ does not intersect the indeterminacy set of $g$, since otherwise there would be a curve $C$ 
containing $q$ that is contracted by $g^{-1}$. And $D$ is a connected component of $\partial Y_0$, because 
otherwise $g$ maps one of the $E_j$ to a curve that intersects~$\U_0$. 
Thus, there are open neighborhoods $\W$ of $D$ and $\W'$ of $q$ such that $\W\cap \partial Y_0=D$ and $g$ 
realizes an isomorphism from 
$\W\smallsetminus D$ to $\W'\smallsetminus \{ q\}$, contracting $D$ onto the smooth point $q\in Y_0$. 
In particular, $\W$ can be contracted onto a smooth point (by a succession of contractions of exceptional curves of the first kind). 
As a consequence, 
there is a birational morphism $\pi_1\colon Y_0\to Y_1$ 
such that 
\begin{enumerate} 
\item $Y_1$ is smooth 
\item $\pi_1$ contracts $D$ onto a point $q_1\in Y_1$ 
\item  $\pi_1$ is an isomorphism from $Y_0\smallsetminus D$ to 
$Y_1\smallsetminus \{q_1\}$. 
\end{enumerate}
In particular, $\pi_1(\U_0)$ is an open subset of $Y_1$ and $\U_1=\pi_1(\U_0) \cup \{q_1\}$ is an open 
neighborhood of $q_1$ in $Y_1$. 

Then, $\Gamma_1:=\pi_1\circ \Gamma_0\circ \pi_1^{-1}$ acts birationally on $Y_1$, and by pseudo-automorphisms on $\U_1$. The
boundary $\partial Y_1=Y_1\smallsetminus \U_1$ contains $\ell_1$ irreducible components, with $\ell_1< \ell_0$ (the difference is the number
of components of $D$), and is a normal crossing divisor because $D$ is a connected component of $\partial Y_0$. 

Repeating this process, we construct a sequence of surfaces $\pi_k\colon Y_{k-1}\to Y_{k}$ and open 
subsets $\pi_k(\U_{k-1})\subset \U_k\subset Y_k$ such that the number of irreducible 
components of $\partial Y_k=Y_k\smallsetminus \U_k$ decreases.
After a finite number of steps (at most $\ell_0$), we may assume that $\Gamma_k\subset\Bir(Y_k)$ does not 
contract any boundary curve onto a point of the open subset 
$\U_k$. On such a model, $\Gamma_k$ acts by automorphisms on $\U_k$. 

We fix such a model, which we denote by the letters $Y$, $\U$, $\partial Y$, $\varphi$. The new birational map 
$\varphi\colon Y\dasharrow X$ is the composition of $\varphi_0$ with the inverse of the morphism $Y_0\to Y_k$. On such a 
model, properties  (\ref{mc2}) and (\ref{mc3}) are satisfied. Moreover, (\ref{mc4}) follows from (\ref{mc3}). We now modify $Y$ further to get property (\ref{mc5}).

\vspace{0.1cm}
{\bf{Step 3.--}} Assume that the curve $E_i\subset Y\smallsetminus \U$ is not contracted by $\Gamma$. 
Let $F$ be the orbit of $E_i$: $F=\cup_{g\in\Gamma} g_\circ(E_i)$; by property (\ref{mc4}),
this curve is contained in the boundary $\partial Y$ of the open subset $\U$. Let $\overline{\partial Y\smallsetminus F}$ denote
the Zariski closure of $\partial Y\smallsetminus F$, and set 
\[
\U'= \U \cup (F\smallsetminus \overline{\partial Y\smallsetminus F}).
\]
The group $\Gamma$ also acts by pseudo-automorphisms on $\U'$. This operation decreases the number $\ell$ 
of irreducible components of the boundary. Thus, combining steps 2 and 3 finitely many times, we reach a model that
satisfies Properties (\ref{mc2}) to (\ref{mc5}). We continue to denote it by $Y$.

\vspace{0.1cm}
{\bf{Step 4.--}} If the boundary $\partial Y $ contains a smooth (rational) curve $E_i$ of self-intersec\-tion~$-1$, 
 it can be blown down to a smooth point $q$ by a birational morphism 
$\pi\colon Y\to Y'$; the open subset $\U$ is not affected, but the boundary $\partial Y'$ has one component less. 
If $E_i$ was a connected component of $\partial Y$, then $\U'=\pi(\U)\cup \{q\}$ is a neighborhood
of $q$ and one replaces $\U$ by $\U'$, as in step 2.
Now, two cases may happen. If the boundary $\partial Y'$
ceases to be a normal crossing divisor, we come back to $Y$ and do not apply this surgery. If $\partial Y'$ has normal crossings, 
we replace $Y$ by this new model. In a finite number of steps, looking successively at all $(-1)$-curves and iterating the 
process, we reach a new surface $Z$ on which all five properties are satisfied. 
\end{proof}

\begin{rem} One may also remove property (\ref{mc6}) and replace property (\ref{mc2}) by 
\begin{itemize}
\item[(1')] The $E_i$ are rational curves, and none of them is a smooth rational curve with self-intersection $-1$.
\end{itemize}
But doing so, we may lose the normal crossing property. To get property (1'), apply the theorem and argue as 
in step 4. \end{rem}

\subsection{Constraints on the boundary}\label{par:constraints-boundary}

We now work on the new surface $Z$ given by Theorem~\ref{thm:FW-regularization-surfaces}. 
Thus, $Z$ is the surface, $\Gamma$ the subgroup of $\Bir(Z)$, $\U$ the open subset on which $\Gamma$ 
acts by automorphisms,  and $\partial Z$ 
the boundary of $\U$. 

\begin{pro}[Gizatullin, \cite{Gizatullin:1971} \S~4]\label{pro:Gizatullin-boundary}
There are four possibilities for the geometry of the boundary $\partial Z=Z\smallsetminus \U$. 
\begin{enumerate}
\item\label{giz1}  $\partial Z$ is empty.
\item\label{giz2}  $\partial Z$ is a cycle of rational curves.
\item\label{giz3} $\partial Z$ is a chain of $\ell$ rational curves and if $\ell=1$ it is a smooth rational curve of positive self intersection.
\item\label{giz4} $\partial Z$ is the disjoint union of finitely many smooth rational curves of self-intersection $0$.
\end{enumerate}
Moreover, in cases (\ref{giz2}) and (\ref{giz3}), the open subset $\U$ is the blow-up of an affine surface.
\end{pro}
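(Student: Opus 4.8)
The plan is to reduce the statement to Gizatullin's combinatorial classification of boundary curves of surfaces with large automorphism groups \cite{Gizatullin:1971}, whose hypotheses are furnished almost verbatim by Theorem~\ref{thm:FW-regularization-surfaces}. First I would record the structural input: by that theorem $\partial Z=\bigcup_{i=1}^\ell E_i$ is a normal crossing union of \emph{rational} curves (Property~(\ref{mc2})), each component $E_i$ is contracted to a point of $\partial Z$ by some $g\in\Gamma$ (Property~(\ref{mc5})), every $g\in\Gamma$ maps $\partial Z$ into $\partial Z$ by strict transform (Property~(\ref{mc4})), and the pair $(Z,\U)$ is minimal in the sense of Property~(\ref{mc6}). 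I would encode all this in the weighted dual graph $G$ of $\partial Z$: vertices are the $E_i$, weighted by the self-intersections $E_i\cdot E_i$, and edges record the nodes; the normal crossing hypothesis guarantees that $G$ is a genuine finite (multi)graph. If $\partial Z=\emptyset$ we are in case~(\ref{giz1}), so from now on I would assume $\partial Z\neq\emptyset$.

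The heart of the argument is to understand how the contractions coming from $\Gamma$ act on $G$. Fix $g\in\Gamma$ contracting $E_i$ onto a boundary point $q\in\partial Z$. Since $g$ is an automorphism of $\U$ and preserves $\partial Z$, I would resolve the indeterminacy of $g$ on the boundary, write $g=p_2\circ p_1^{-1}$ with $p_1,p_2\colon W\to Z$ birational morphisms that are isomorphisms over $\U$, and read off the induced transformation of $G$: a connected subconfiguration containing $E_i$ is blown down and a corresponding configuration is created over $q$. This is exactly Gizatullin's calculus of elementary transformations of boundary ``zigzags''. The key consequence to extract is a rigid constraint on the self-intersection weights and on the local shape of $G$ around each contractible component: the contractibility of $E_i$ within the boundary forces $E_i$ to sit in a chain that can be collapsed to a smooth point, which severely restricts how branch points and transverse components may occur.

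I would then organize the classification around the connectedness of $\partial Z$. If $\partial Z$ is connected, the contraction moves together with minimality Property~(\ref{mc6}) force $G$ to be linear or to close up once, i.e. a chain (case~(\ref{giz3})) or a cycle (case~(\ref{giz2})); a tree with a genuine branch point, or a chain carrying a transverse extra component, cannot survive the moves induced by $\Gamma$. If $\partial Z$ is disconnected, the same contractibility, now incompatible with a negative-definite or longer connected piece, forces each connected component to be a single smooth rational curve of self-intersection $0$; these assemble into the fibers of a ruling $Z\to B$, giving case~(\ref{giz4}). The delicate part, and the main obstacle, is precisely this combinatorial rigidity — ruling out every intermediate dual graph and showing that the moves coming from $\Gamma$ can only preserve a chain, a cycle, or a disjoint union of $0$-curves. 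This is the content of Gizatullin's §4, which I would invoke once our hypotheses are matched to his; I do not expect to improve on his case analysis.

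Finally, for the ``moreover'' clause, in cases~(\ref{giz2}) and~(\ref{giz3}) the boundary is a connected chain or cycle of rational curves. Such a configuration becomes the support of an ample divisor after blowing down finitely many $(-1)$-curves lying \emph{inside} $\U$; by Goodman's affineness criterion (complement of the support of an ample divisor is affine) the resulting surface is affine. Hence $\U$ is obtained from an affine surface by finitely many blow-ups, which is the asserted conclusion.
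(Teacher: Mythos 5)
For the four-case dichotomy your route coincides with the paper's: both you and the authors match the hypotheses supplied by Theorem~\ref{thm:FW-regularization-surfaces} (minimal normal-crossing completion of $\U$, every boundary component contracted by some element of $\Gamma$, boundary preserved by strict transforms) to Gizatullin's setting, identify $\partial Z$ with his curve of contracted components, and then invoke his classification without redoing the combinatorial analysis. The problem lies in the ``moreover'' clause, which is in fact the only part of the proposition that the paper proves rather than cites, and there your argument has a genuine gap.

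You assert that in cases (\ref{giz2}) and (\ref{giz3}) the boundary ``becomes the support of an ample divisor after blowing down finitely many $(-1)$-curves lying inside $\U$''. Two things are missing or wrong in this sentence. First, nothing in your argument justifies that \emph{any} divisor supported on $\partial Z$ can be made ample: this requires positivity of the boundary, and the needed input is Gizatullin's Corollary~4 (Section~5 of \cite{Gizatullin:1971}), which says that in cases (\ref{giz2}) and (\ref{giz3}) some component $E_0$ satisfies $E_0^2>0$. From this the paper constructs, by an explicit choice of coefficients, an effective divisor $D=\sum_i a_iE_i$ supported on $\partial Z$ with $D^2>0$ and $D\cdot E_i>0$ for every $i$, hence big and nef; without such an input the claim is not even plausible (a chain all of whose components have negative self-intersection supports no big divisor, no matter what one contracts in $\U$). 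Second, the contraction mechanism cannot be a sequence of $(-1)$-curve blow-downs. An ample divisor supported on the boundary must meet every curve positively, so one must contract \emph{every} complete curve $F\subset \U$ with $D\cdot F=0$; by the Hodge index theorem these span a negative definite sublattice of $\NS(Z)$, and the contraction removing them is the Mumford--Grauert (Artin) contraction, which in general produces a \emph{singular} normal surface. A $(-2)$-curve in $\U$ disjoint from $\partial Z$ is the basic obstruction: it can never be removed by smooth $(-1)$-blow-downs, yet it must be contracted. This is precisely why the paper's conclusion is that $\U$ is a blow-up of a possibly singular affine surface $\U'$, with ampleness of the pushed-forward divisor on the contracted model $Z'$ verified by Nakai--Moishezon. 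Your final step (complement of the support of an effective ample divisor is affine) is fine, but it only applies once such a divisor has actually been produced, and that production is the entire content of the paper's proof.
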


Thus, there are four possibilities for $\partial Z$, which we study successively. 
We shall start with (1) and (4) in sections~\ref{par:Projective-Surfaces} 
and~\ref{par:invariant-fibrations}. Then case (3) is dealt with in Section~\ref{par:completion-zigzag}. 
Case (2) is  more involved: it is treated in Section~\ref{par:Cycles-Thompson}.

Before that, let us explain how Proposition~\ref{pro:Gizatullin-boundary} follows from Section~5 of \cite{Gizatullin:1971}. 
First, we describe the precise meaning of the statement, and
then we explain how the original results of~\cite{Gizatullin:1971} apply to our situation.

\vspace{0.2cm}
{\bf{The boundary and its dual graph .}}-- Consider the dual graph $\G_Z$ of the boundary $\partial Z$. The vertices
of $\G_Z$ are in one to one correspondence with the irreducible components $E_i$ of $\partial Z$. The edges correspond
to singularities of $\partial Z$: each singular point $q$ gives rise to an edge connecting the components $E_i$ that determine
the two local branches of $\partial Z$ at $q$. When the two branches correspond to the same irreducible component, 
one gets a loop of the graph $\G_Z$. 

We say that $\partial Z$ is a {\bf{chain}} of rational curves if the dual graph is of type $A_\ell$: $\ell$ is the number of components, 
and the graph is linear, with $\ell$ vertices. 
Chains are also called {\bf{zigzags}} by Danilov and Gizatullin. 

We say that $\partial Z$ is a {\bf{cycle}} if the dual graph is isomorphic to a regular polygon with $\ell$ vertices. 
There are two special cases: when $\partial Z$ is reduced to one component, this curve is a rational curve with one 
singular point and the dual graph is a loop (one vertex, one edge); when $\partial Z$ is made of two components, these components 
intersect in two distinct points, and the dual graph is made of two vertices with two edges between them. For $\ell=  3, 4, \ldots$, 
the graph is a triangle, a square, etc.
 
\vspace{0.2cm}
{\bf{Gizatullin's original statement.}}-- 
To describe Gizatullin's article, let us introduce some vocabulary. Let $S$ be a projective surface, and $C\subset S$ 
be a curve; $C$ is a union of irreducible components, which may have singularities. Assume that $S$ is smooth in a neighborhood
of $C$. Let $S_0$ be the complement of $C$ in $S$, and let $\iota \colon S_0\to S$ be the natural embedding of $S_0$ in $S$. 
Then, $S$ is a {\bf{completion}} of $S_0$: this completion is marked by the embedding $\iota \colon S_0\to S$, and its boundary is 
the curve $C$. Following \cite{Gizatullin:1971} and \cite{Danilov-Gizatullin:I,Danilov-Gizatullin:II}, we only consider completions 
of $S_0$ by curves (i.e.\ $S\smallsetminus \iota(S_0)$ is of pure dimension $1$), and we always assume $S$ to be smooth in a
neighborhood of the boundary. Such a completion is  

\begin{itemize}
\item[(i)] {\bf{simple}} if the boundary $C$ has normal crossings;
\item[(ii)] {\bf{minimal}} if it is simple and minimal for this property: if $C_i\subset C$ is an exceptional 
curve of the first kind then, contracting $C_i$, the image of $C$ is not a normal  crossing divisor
anymore. Equivalently, $C_i$ intersects at least three other components of $C$. Equivalently, if $\iota'\colon S_0\to S'$
is another simple completion, and $\pi\colon S\to S'$ is a birational morphism such that $\pi\circ \iota=\iota'$, then $\pi$
is an isomorphism. 
\end{itemize}
If $S$ is a completion of $S_0$, one can blow-up boundary points to obtain a simple completion, and then blow-down some
of the boundary components $C_i$ to reach a minimal completion. 

Now, consider the group of automorphisms of the open surface $S_0$. This group $\Aut(S_0)$ acts by birational transformations
on $S$. An irreducible component $E_i$ of the boundary $C$ is {\bf{contracted}} if there is an element $g$ of $\Aut(S_0)$ that 
contracts $E_i$: $g_\circ(E_i)$ is a point of $C$. Let $E$ be the union of the contracted components. In \cite{Gizatullin:1971} (Corollary 4 and Proposition 5 of \S 5), Gizatullin 
proves that $E$ satisfies one of the four properties stated in Proposition~\ref{pro:Gizatullin-boundary}; moreover, in cases (2) and (3), 
$E$ contains an irreducible component $E_i$ with $E_i^2 \geq 0$; note that 
(4) contains the case of a unique rational curve of self-intersection $0$ (a different choice is made in \cite{Gizatullin:1971}). 

Thus, Proposition~\ref{pro:Gizatullin-boundary}  follows from the properties of the pair $(Z,\U,\Gamma)$: the open subset $\U$ plays the
role of $S_0$, and $Z$ is the completion $S$; the boundary $\partial Z$ is the curve $C$: it is a normal crossing divisor, and it
is minimal by construction. Since every component of $\partial Z$ is contracted by at least one element of $\Gamma\subset \Aut(\U)$, 
$\partial Z$ coincides with Gizatullin's curve $E$. The only thing we have to prove is the last sentence of Proposition~\ref{pro:Gizatullin-boundary}, concerning the structure of the open subset $\U$; thus, we assume that we are in cases (2) or (3) of Proposition~\ref{pro:Gizatullin-boundary}.

First, let us show that $E=\partial Z$ supports an effective divisor $D$ such that $D^2>0$ and $D\cdot F \geq 0$ for every irreducible curve.
If $\partial Z$ is irreducible, then it is a curve of positive self intersection (by convention in case (3), and by Corollary 4 in~\cite[\S 4]{Gizatullin:1971}).
Thus, we may assume that $\partial Z$ is a chain or a loop of length $\ell \geq 2$.
To construct $D$, fix an irreducible component $E_0$ of $\partial Z$ with $E_0^2\geq 0$; as said above such a curve exists by Gizatullin results (Corollary 4 of \cite[\S 5]{Gizatullin:1971}). Assume that $\partial Z$ is a  cycle, and list cyclically the other irreducible components: $E_1$, $E_2$, $\ldots$, up to $E_m$, with $E_1$ and $E_m$ intersecting $E_0$ (and $m=\ell -1$). 
If $m=1$, we set $D=a_0 E_0+E_1$; then $D\cdot E_0=2$ and $D\cdot E_1=2a_0+E_1^2$ are positive if $a_0$ is large enough. If $m\geq 2$, 
we consider $D_1=a_1 E_0+E_1$. Then $D_1\cdot E_0=1$ and $D_1\cdot E_1=a_1+E_1^2$ are both positive if $a_1$ is large enough; moreover, $D_1\cdot E_2=1$ and $D_1\cdot E_m=a_1$ if $m\geq 3$, or $D_1\cdot E_2=a_1+1$ if $m=2$. Then, set $D_2=a_2D_1+E_2$, $\ldots$,  
up to $D_m=a_m D_{m-1}+E_m$. If the $a_j$ are large enough, all intersections $D_m\cdot E_j$ are positive, for all $0\leq j \leq m$. We choose such a
sequence of integers $a_i$, and set $D=D_m$. Then  $D$ intersects every   irreducible curve $F$ non-negatively and $D^2>0$. Thus, $D$ is big and nef (see \cite{Lazarsfeld}, Section~2.2). 
 A similar proof applies when $\partial Z$ is a zigzag. Let $[D]^\perp$ be the subspace of
$\NS(X)$ spanned by classes of irreducible curves $F$ with $D\cdot F=0$. 

Now, consider the linear system $\vert mD\vert$ for a large divisible integer $m>0$, and decompose it into a mobile part
$\vert M_m\vert$ and a fixed part $\vert R_m\vert$, where $M_m$ and $R_m$ are effective divisors with
\[
mD=M_m+R_m.
\]
Note that the irreducible curves $F$ with $[F]\in [D]^\perp$ are characterized by the property $F\cdot M_m=0=F\cdot R_m$.
By definition, $\vert M_m\vert$ has only finitely many base points. Thus, changing $m$ into some large multiple if necessary, 
and applying Fujita-Zariski theorem (see~\cite{Lazarsfeld}, 2.1.32, p. 132), we may assume that 
\begin{itemize}
\item[(i)] $M_m$ is big (because so is $D$);
\item[(ii)] $M_m$ is nef (because $M_m$ is mobile);
\item[(iii)] $M_m$ is free of base point (by Fujita-Zariski theorem).
\end{itemize}
Then, the linear system $\vert M_m\vert$ gives a birational morphism (see~\cite{Lazarsfeld}, 2.1.27, p. 129)
\[
\varphi\colon Z\to Z'\subset {\mathbb{P}}^N
\]
onto a normal, projective surface $Z'$ such that $M_m$ coincides with the pullback of a hyperplane section $H$
of $Z'$. In particular, $H^1(Z,dM_m)=0$ for large values of $d$. Now,  let us show that $R_m=0$ for some adequate
choice of $mD$. If not, some curve $E_j$ of the boundary $\partial Z$ appears as a component of $R_m$ but not as
a component of $M_m$; since $\partial Z={\mathrm{Support}}(D)$ is connected, we can choose such an $E_j$ that intersects $M_m$
in at least one point. Thus, $(dM_m+a_jE_j)\cdot E_j>0$ for any $a_j$ and every large $d$. Consider the 
exact sequence of sheaves ${\mathcal{O}}(dM_m)\to {\mathcal{O}}(dM_m+ E_j)\to {\mathcal{O}}_{E_j}((dM_m+ E_j)_{\vert E_j})$,
and the associated long exact sequence in cohomology. By the vanishing of $H^1(Z,dM_m)$ we get 
\[
H^0(Z, dM_m)\to H^0(Z, dM_m+E_j)\to H^0(E_j, dM_m+E_j)\to 0.
\]
If $E_j$ were part of the base locus of the linear system $\vert dM_m+E_j\vert$, then the second arrow in this sequence would vanish, 
so that $ H^0(E_j, dM_m+E_j)=0$. But this would be a contradiction because $E_j$ is a rational curve and $dM_m+E_j$ has positive
degree on $E_j$. Thus, $E_j$ is not in the base locus of $\vert dM_m +E_j\vert$: we may now assume $R_m=0$. 
From this, we deduce that an irreducible curve $C\subset Z$ is
contracted by $\varphi$, if and only if $C\cdot M_m=0$ if and only if $[C]\in [D]^\perp$, if and only if $C$ does not intersect the
boundary curve $\partial Z$; and that $\varphi$ induces a birational morphism from $Z\setminus \partial Z$ to the
affine surface $Z'\setminus H$. This concludes the proof of the proposition. 

\subsection{Projective surfaces and automorphisms}\label{par:Projective-Surfaces}

In this section, we (almost always) assume that $\Gamma$ acts by regular automorphisms on a projective surface $X$. This corresponds
to case (1) in Proposition~\ref{pro:Gizatullin-boundary}. Our goal is the special case of Theorem~B which is stated below 
as Theorem~\ref{thm:classification-virtually-autom}. We shall assume that $\Gamma$ has property {\FW} in some of the statements
(this was not a hypothesis in Theorem~\ref{thm:FW-regularization-surfaces}).
We may, and shall, assume that $X$ is smooth. We refer to~\cite{BPVDVH, Beauville:Surfaces, Hartshorne:book} for the classification of surfaces 
and the main notions attached to them.
 
\subsubsection{Action on the N\'eron-Severi group}
The intersection form is a non-degenerate quadratic form $q_X$ on the N\'eron-Severi group $\NS(X)$, 
and Hodge index theorem asserts that its signature is $(1, \rho(X)-1)$, where $\rho(X)$ denotes the Picard number, i.e.\ the rank of
the lattice $\NS(X)\simeq \Z^\rho$. 

The action of $\Aut(X)$ on the N\'eron-Severi group $\NS(X)$ provides a linear representation preserving
the intersection form $q_X$. This gives a morphism
\[
\Aut(X)\to \O(\NS(X); q_X).
\]
Fix an ample class $a$ in $\NS(X)$ and consider the hyperboloid 
\[
{\mathbb{H}}_X=\{ u \in \NS(X)\otimes_\Z\R; \; q_X(u,u)=1 \; \,  {\text{and}}\, \; q_X(u,a)>0 \}.
\]
This set is one of the two connected components of $\{u; q_X(u,u)=1\}$. With the riemannian metric 
induced by $(-q_X)$, it is a copy of the hyperbolic space of dimension $\rho(X)-1$; the group $\Aut(X)$ 
acts by isometries on this space (see \cite{Cantat:Milnor}). 

\begin{pro}\label{pro:BHW-Automorphisms}
Let $X$ be a smooth projective surface. Let $\Gamma$ be a subgroup of $\Aut(X)$. If $\Gamma$ 
has Property {\FW}, then its action on $\NS(X)$ fixes a very ample class, the image of $\Gamma$ in 
$\O(\NS(X); q_X)$ is finite, and a finite index subgroup of $\Gamma$ is contained in $\Aut(X)^0$.
\end{pro}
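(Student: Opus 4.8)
The plan is to reduce the whole statement to a single assertion, namely that the image $\bar\Gamma\subseteq\O(\NS(X);q_X)$ is finite, and then to obtain that finiteness from a cubulation result combined with Property \FW. First I would record two structural facts about $\bar\Gamma$. Since $\bar\Gamma$ preserves the integral lattice $\NS(X)\cong\Z^{\rho(X)}$ together with the form $q_X$, it lies in the arithmetic group $\O(q_X;\Z)$, and so $\bar\Gamma$ is a \emph{discrete} subgroup of $\O(\NS(X)\otimes_\Z\R;q_X)\cong\O(1,\rho(X)-1)(\R)$ acting on $\mathbb{H}_X$ by isometries. Moreover $\bar\Gamma$ inherits Property \FW: it is a quotient of $\Gamma$, and Property \FW passes to quotients, since one pulls any commensurating action of $\bar\Gamma$ back along $\Gamma\twoheadrightarrow\bar\Gamma$ and transfixes it there.

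Granting the finiteness of $\bar\Gamma$, the two other conclusions are formal. The ample cone of $X$ is $\Aut(X)$-invariant, hence $\bar\Gamma$-invariant; averaging any ample class over the finite group $\bar\Gamma$ yields a $\bar\Gamma$-fixed rational ample class, and after clearing denominators and passing to a large multiple one obtains a $\bar\Gamma$-fixed integral very ample class. For the last assertion, let $K$ be the kernel of $\Aut(X)\to\O(\NS(X);q_X)$; by Theorem~\ref{thm:pseudo-automorphisms-neron-severi} the subgroup $\Aut(X)^0$ has finite index in $K$. As $\bar\Gamma$ is finite, $\Gamma\cap K$ has finite index in $\Gamma$, and therefore so does $\Gamma\cap\Aut(X)^0$, a subgroup of $\Aut(X)^0$.

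The main step is the finiteness of $\bar\Gamma$, and the geometric input I would use is that the arithmetic lattice $\O(q_X;\Z)$ of simplest type in $\O(1,\rho(X)-1)$ is virtually special: by the theorem of Bergeron--Haglund--Wise (together with Agol--Wise for the specialness), some finite-index subgroup $\Lambda\subseteq\O(q_X;\Z)$ acts freely and properly on a {\sc{cat}}$(0)$ cube complex $C$. I would then set $\bar\Gamma_0:=\bar\Gamma\cap\Lambda$, which has finite index in $\bar\Gamma$ and so again has Property \FW, since Property \FW passes to finite-index subgroups. Restricting the $\Lambda$-action, $\bar\Gamma_0$ acts on $C$, so the cube-complex characterization of Property \FW gives a $\bar\Gamma_0$-fixed cube; as the action is proper, cube-stabilizers are finite, whence $\bar\Gamma_0$ is finite. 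Being of finite index in $\bar\Gamma$, this forces $\bar\Gamma$ to be finite.

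The hard part is exactly this cubulation input: everything hinges on knowing that these hyperbolic arithmetic lattices act properly on a {\sc{cat}}$(0)$ cube complex, which is what lets one convert a fixed cube (supplied by Property \FW) into finiteness (via properness). As a cross-check that does not invoke the deep cubulation, one can dispose of the elementary discrete actions by hand: if the $\bar\Gamma$-action on $\mathbb{H}_X$ has bounded orbits then discreteness already yields finiteness, while if it fixes a point of $\partial\mathbb{H}_X$ then $\bar\Gamma$ is virtually abelian, and an infinite virtually abelian group contains a finite-index copy of $\Z^k$ surjecting onto $\Z$, contradicting Property \FW (which passes to finite-index subgroups and to quotients, and fails for $\Z$). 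The cubulation is what handles the remaining non-elementary discrete actions uniformly, which is why no elementary trichotomy alone suffices.
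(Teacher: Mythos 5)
Your proposal is correct and takes essentially the same route as the paper: both reduce everything to finiteness of the image of $\Gamma$ in $\O(\NS(X);q_X)$, obtain that finiteness by combining Property~{\FW} with the Bergeron--Haglund--Wise cubulation of this standard arithmetic group, and then conclude by averaging (very) ample classes and by using that $\Aut(X)^0$ has finite index in the kernel of the action on $\NS(X)$. The only cosmetic differences are that the paper converts the fixed cube into a fixed point of ${\mathbb{H}}_X$ and invokes discreteness of the image there, whereas you use properness of the cubical action directly (after passing to a finite-index subgroup and citing Agol--Wise, neither of which is needed).
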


\begin{proof}
The image $\Gamma^*$ of $\Gamma$ is contained in the arithmetic group $\O(\NS(X); q_X)$. The 
N\'eron-Severi group $\NS(X)$ is a lattice $\Z^\rho$ and $q_X$ is defined over $\Z$. If $\rho$ is odd, 
one can change $\NS(X)$ into a $(\rho+1)$-dimensional lattice $\NS(X)\oplus \Z{\mathbf{e}}$ and change $q_X$ into the quadratic form defined
by $q({\mathbf{u}}+m{\mathbf{e}})=q_X({\mathbf{u}})-m^2$ for all ${\mathbf{u}}+m{\mathbf{e}}$ in $\NS(X)\oplus \Z{\mathbf{e}}$.
After such a change, $\Gamma^*$ embeds into the orthogonal group $\O(\Z^r; q)$ for some even $r\in \{\rho,\rho+1\}$ and some integral
quadratic form of signature $(1,r-1)$. It is proved by Bergeron, Haglund, and Wise 
that such a lattice acts properly on some ${\text{\sc{cat}}}(0)$ cube complex (see Theorem 6.1 and the paragraph before Theorem 6.2 in \cite{Bergeron-Wise}; see \cite{Bergeron-Haglund-Wise:2011} for the case of uniform lattices). But if a group with Property {\FW} acts by isometries on such a complex, it has a fixed point (see \cite{Cornulier:Survey-FW}). Thus, by properness of
the action, the image $\Gamma^*$ of $\Gamma$ in $\O(\NS(X);  q_X)$ is finite. 

The kernel $K\subset \Aut(X)$ of the action on $\NS(X)$ contains $\Aut(X)^0$ as a finite index subgroup. Thus, 
if $\Gamma$ has Property {\FW}, it contains a finite index subgroup that is contained in $\Aut(X)^0$ (see Theorem~\ref{thm:pseudo-automorphisms-neron-severi}). \end{proof}

\subsubsection{Non-rational surfaces}\label{par:non-rational-surfaces}

Here, the surface $X$ is not rational. The following proposition classifies subgroups 
of $\Bir(X)$ with Property {\FW}; in particular, such a group is finite if the Kodaira
dimension of $X$ is non-negative (resp.\ if the characteristic of $\bfk$ is positive). Recall that 
$\overline{\Z}\subset \overline{\Q}$ is the ring of algebraic integers.

\begin{pro}
Let $X$ be a smooth, projective, and  non-rational surface, over the algebraically closed field $\bfk$. 
Let $\Gamma$ be an infinite subgroup of $\Bir(X)$ with Property~{\FW}. Then $\bfk$ has characteristic
$0$, and there is a birational map $\varphi\colon X\dasharrow C\times \P^1_\bfk$ 
that conjugates $\Gamma$ to a subgroup of $\Aut(C\times \P^1_\bfk)$. Moreover, there is
a finite index subgroup $\Gamma_0$ of $\Gamma$ such that $\varphi\circ \Gamma_0\circ \varphi^{-1}$, 
is a subgroup of $\PGL_2(\oZ)$, acting
on $C\times \P^1_\bfk$ by linear projective transformations on the second factor. \end{pro}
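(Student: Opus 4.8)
The plan is to reduce first to a ruled surface over a curve of positive genus, then to the fibrewise (Jonqui\`eres) group over that curve, and finally to invoke Bass' theorem through Corollary~\ref{coro:Bass-k(C)}.

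First I would show that $X$ is birationally a product $C\times\P^1$ with $C$ of genus $\ge 1$. Since $\Gamma$ is infinite, $X$ is not of general type; and if $\kod(X)\ge 0$ then $X$ has a unique minimal model $X_0$ with $\Bir(X)=\Aut(X_0)$ (equivalently, by Theorem~\ref{birtran}, not being ruled is exactly what produces such a model). Applying Proposition~\ref{pro:BHW-Automorphisms} to $\Gamma\subset\Aut(X_0)$, a finite-index subgroup $\Gamma_0$ of $\Gamma$ lies in $\Aut(X_0)^0$, which for a minimal surface of non-negative Kodaira dimension is an abelian variety (possibly trivial). Thus $\Gamma_0$ would be an infinite subgroup of an abelian variety carrying Property~{\FW} (finite-index subgroups of {\FW}-groups have {\FW}); since infinite abelian groups do not have Property~{\FW}, this is impossible. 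Hence $\kod(X)=-\infty$, so $X$ is ruled, and non-rationality forces the base curve $C$ to have genus $\ge 1$. I then fix an identification $X\dasharrow C\times\P^1$ and replace $\Gamma$ by its conjugate.

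Next I would exploit that, for genus $\ge 1$, the projection $\pi\colon C\times\P^1\to C$ is canonical (it is the Albanese fibration), so $\Bir(C\times\P^1)=\Bir_\pi(C\times\P^1)$ and there is a homomorphism $r_C\colon\Gamma\to\Aut(C)$ whose kernel sits inside the fibrewise group $\PGL_2(\bfk(C))$. The image $r_C(\Gamma)$ inherits Property~{\FW}; when the genus is $\ge 2$ it is finite because $\Aut(C)$ is, and when the genus is $1$ its intersection with the translation subgroup $C\subset\Aut(C)$ is a finite-index {\FW}-subgroup of an abelian variety, hence finite by the same fact as above. Therefore a finite-index subgroup $\Gamma_1\subset\Gamma$ lies in $\PGL_2(\bfk(C))$ and, being finite-index in the infinite group $\Gamma$, is infinite with Property~{\FW}, in particular with Property~(FA). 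Corollary~\ref{coro:Bass-k(C)} now yields that $\bfk$ has characteristic $0$ and that some $h\in\PGL_2(\bfk(C))$ conjugates $\Gamma_1$ into $\PGL_2(\oZ)\subset\PGL_2(\bfk)$, i.e. into constant projective transformations of the $\P^1$-factor; these are genuine automorphisms of $C\times\P^1$.

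Finally I would upgrade this to the full group. After conjugating everything by $h$ (which leaves the base action unchanged), $\Gamma_1$ consists of constant elements $\delta(c,z)=(c,Dz)$ with $D\in\PGL_2(\oZ)$; since $\Gamma_1$ is infinite with {\FW}, Lemma~\ref{lem:pgl2-fw} shows it has no finite orbit on $\P^1$, so its image in $\PGL_2$ is Zariski dense and has trivial centralizer. For any $\gamma(c,z)=(\sigma(c),B_cz)$ in $\Gamma$ with $B\in\PGL_2(\bfk(C))$, normality of $\Gamma_1$ forces $B_cDB_c^{-1}$ to be independent of $c$ for all such $D$; Zariski density then makes $c\mapsto B_c$ constant, so $\gamma$ is a regular automorphism (here one uses that every morphism from the projective curve $C$ to the affine group $\PGL_2$ is constant). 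Thus the whole conjugated $\Gamma$ lies in $\Aut(C\times\P^1)=\PGL_2(\bfk)\times\Aut(C)$, with the finite-index subgroup $\Gamma_0=\Gamma_1$ inside $\PGL_2(\oZ)$, as required. The main obstacle is the group-theoretic input used twice above---that infinite subgroups of abelian varieties cannot have Property~{\FW}---together with the verification that the fibrewise coordinate $B_c$ is forced to be constant; the remainder is surface-classification bookkeeping supplied by Proposition~\ref{pro:BHW-Automorphisms} and Corollary~\ref{coro:Bass-k(C)}.
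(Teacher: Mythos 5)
Your proposal is correct, and its skeleton coincides with the paper's proof: rule out $\kod(X)\ge 0$ via the unique minimal model, Proposition~\ref{pro:BHW-Automorphisms}, and the fact that infinite abelian groups (here $\Aut(X_0)^0$, an abelian variety) cannot have Property~{\FW}; then identify $X$ birationally with $C\times\P^1$, $g(C)\ge 1$, use $\Bir(C\times\P^1)=\Aut(C)\ltimes\PGL_2(\bfk(C))$ to find a finite-index normal subgroup inside $\PGL_2(\bfk(C))$, and invoke Corollary~\ref{coro:Bass-k(C)} to get characteristic $0$ and the conjugation into $\PGL_2(\oZ)$. The one place where you genuinely diverge is the final upgrade from $\Gamma_1$ to all of $\Gamma$: the paper argues geometrically, observing that for $g\in\Gamma$ the finite set $\Ind(g)$ is invariant under the infinite {\FW}-group $\Gamma_0$, which contradicts Lemma~\ref{lem:pgl2-fw} unless $\Ind(g)=\emptyset$, so $g$ and $g^{-1}$ are morphisms; you instead compute in the semidirect product, using normality of $\Gamma_1$ to force $c\mapsto B_cDB_c^{-1}$ to be constant for all $D$ in the image $\Delta$ of $\Gamma_1$, and then Zariski density of $\Delta$ (again from Lemma~\ref{lem:pgl2-fw}) plus triviality of the centre of $\PGL_2$ to conclude $B_c$ is constant. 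Both mechanisms are valid; the paper's is shorter and purely geometric (no coordinates), while yours has the merit of exhibiting explicitly why the fibrewise component must be constant, and in fact re-proves in this special case the general principle behind the paper's argument. One small blemish: your parenthetical appeal to ``every morphism from the projective curve $C$ to the affine group $\PGL_2$ is constant'' is not what does the work (the map $c\mapsto B_c$ is a priori only rational, not regular); constancy comes entirely from the centralizer argument, so that remark should be deleted rather than relied upon.
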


\begin{proof}
Assume, first, that the Kodaira dimension of $X$ is non-negative. Let $\pi\colon X\to X_0$ be the projection of 
$X$ on its (unique) minimal model (see \cite{Hartshorne:book}, Thm. V.5.8). 
The group $\Bir(X_0)$ coincides with $\Aut(X_0)$; thus, after conjugacy by $\pi$, $\Gamma$
becomes a subgroup of $\Aut(X_0)$, and Proposition~\ref{pro:BHW-Automorphisms} provides a finite index
subgroup  $\Gamma_0\leq \Gamma$ that is contained in $\Aut(X_0)^0$. Note that $\Gamma_0$ inherits Property {\FW} from $\Gamma$.
 
If the Kodaira dimension of $X$ is equal to $2$, the group $\Aut(X_0)^0$ is trivial; hence $\Gamma_0=\{\Id_{X_0}\}$ and $\Gamma$ 
is finite. 
If the Kodaira dimension is equal to $1$, $\Aut(X_0)^0$ is either trivial, or isomorphic to an elliptic curve, acting by translations on 
the fibers of the Kodaira-Iitaka fibration of $X_0$ (this occurs, for instance, when $X_0$ is the product of an elliptic curve with a curve of higher genus). 
If the Kodaira dimension is $0$, then $\Aut(X_0)^0$ is also an abelian group (either trivial, or isomorphic to an abelian surface). 
Since abelian groups with Property {\FW} are finite, the group $\Gamma_0$ is finite, and so is $\Gamma$. 

We may now assume that the Kodaira
dimension ${\sf{kod}}(X)$ is negative. Since $X$ is not rational, then $X$ is birationally equivalent to a product $S=C\times \P^1_\bfk$, 
where $C$ is a curve of genus ${\mathrm{g}}(C)\geq 1$.  Denote by $\bfk(C)$ the field of rational functions on the curve~$C$. 
The semi-direct product $\Aut(C)\ltimes \PGL_2(\bfk(C))$ acts on $S$ by birational transformations of the form
\[
(x,y)\in C\times \P^1_\bfk \mapsto \left(f(x), \frac{a(x)y+b(x)}{c(x)y+d(x)}\right);
\]
here $f$ is an automorphism of $C$, and $a$, $b$, $c$, and $d$ are elements of $\bfk(C)$ such that $ad-bc$ is not identically $0$.
Moreover, $\Bir(S)$ coincides with this group $\Aut(C)\ltimes \PGL_2(\bfk(C))$ because 
the first projection $\pi\colon S \to C$ is equivariant under the action of $\Bir(S)$ (this follows from the fact that every rational map $\P^1_\bfk\to C$ is constant).

Since ${\mathrm{g}}(C)\geq 1$, $\Aut(C)$ is virtually abelian. Property {\FW} implies that  there is a finite index, normal subgroup 
$\Gamma_0\leq \Gamma$ that is contained in $\PGL_2(\bfk(C))$. By
Corollary~\ref{coro:Bass-k(C)}, every subgroup of $\PGL_2(\bfk(C))$ with Property {\FW} is conjugate to a subgroup of $\PGL_2(\oZ)$ or a finite group if the characteristic of the field $\bfk$ is positive. 

We may assume now that the characteristic of $\bfk$ is $0$ and that $\Gamma_0\subset \PGL_2(\oZ)$ is infinite. Consider an element $g$ of $\Gamma$; it
acts as a birational transformation on the surface $S=C\times \P^1_\bfk$, and  it normalizes $\Gamma_0$: 
\[
g\circ \Gamma_0=\Gamma_0 \circ g.
\]
Since $\Gamma_0$ acts by automorphisms on $S$, the finite set $\Ind(g)$ is $\Gamma_0$-invariant. But a subgroup of $\PGL_2(\bfk)$ with 
Property {\FW} preserving a non-empty, finite subset of $\P^1(\bfk)$ is a finite group (by Lemma~\ref{lem:pgl2-fw}(2)). Thus, $\Ind(g)$ must be empty. This shows that
$\Gamma$ is contained in $\Aut(S)$.
\end{proof}

\subsubsection{Rational surfaces}\label{par:auto-rational-surfaces}

Now, we assume that $X$ is a smooth rational surface, that $\Gamma\leq \Bir(X)$ is an infinite subgroup with Property {\FW}, and that $\Gamma$
contains a finite index, normal subgroup $\Gamma_0$ that is contained in $\Aut(X)^0$. Recall that a smooth surface $Y$
is minimal if it does not contain any smooth rational curve of the first kind, {i.e.}\ with self-intersection $-1$.
Every exceptional curve of the first kind in $X$ is determined by its class in $\NS(X)$ and is therefore  invariant
under the action of $\Aut(X)^0$. The following lemma is obtained by contracting such $(-1)$-curves one by one.

\begin{lem}
 There is a birational morphism $\pi\colon X\to Y$ onto a
minimal rational surface $Y$ that is equivariant under the action of $\Gamma_0$; $Y$ does not contain any
exceptional curve of the first kind and $\Gamma_0$ becomes a subgroup of $\Aut(Y)^0$.
\end{lem}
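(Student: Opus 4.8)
The plan is to contract exceptional curves of the first kind one at a time, verifying at each stage that the contraction is $\Gamma_0$-equivariant and that the descended group stays inside the identity component of the automorphism group. The whole argument rests on the observation that $\Aut(X)^0$ acts trivially on $\NS(X)$: indeed, as recalled in the proof of Proposition~\ref{pro:BHW-Automorphisms}, the kernel of the action of $\Aut(X)$ on $\NS(X)$ contains $\Aut(X)^0$ as a finite index subgroup. Hence $\Gamma_0\subset\Aut(X)^0$ acts trivially on $\NS(X)$ as well.

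From this I would deduce that every exceptional curve of the first kind $E\subset X$ is $\Gamma_0$-invariant: for $g\in\Gamma_0$ the curve $g(E)$ is again a smooth rational $(-1)$-curve with class $g_*[E]=[E]$, and since such a curve is determined by its class in $\NS(X)$ (as recalled just above the statement), one gets $g(E)=E$. I would then contract a single such curve $E$ by Castelnuovo's contractibility criterion, obtaining a birational morphism $\pi_1\colon X\to X_1$ onto a smooth surface with $\rho(X_1)=\rho(X)-1$. Because $E$ is $\Gamma_0$-invariant, each $g\in\Gamma_0$ descends to a regular automorphism $\bar g$ of $X_1$ characterized by $\pi_1\circ g=\bar g\circ\pi_1$; this is the standard descent of an automorphism through the contraction of an invariant $(-1)$-curve. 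To check that the descended group still lies in the identity component, I would observe that the induced map $\Aut(X)^0\to\Aut(X_1)$ is a homomorphism of algebraic groups out of a connected group, so its image lies in $\Aut(X_1)^0$; restricting to $\Gamma_0$ gives $\pi_1\circ\Gamma_0\circ\pi_1^{-1}\subset\Aut(X_1)^0$, so the hypotheses of the lemma are reproduced verbatim on $X_1$.

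Finally I would iterate. If $X_1$ is not minimal, it carries a $(-1)$-curve, which by the same argument applied to the pair $(X_1,\,\pi_1\circ\Gamma_0\circ\pi_1^{-1})$ is invariant and can be contracted, dropping the Picard number once more. Since $\rho$ strictly decreases at each step and is bounded below, after finitely many contractions we reach a minimal rational surface $Y$ carrying no exceptional curve of the first kind, with $\Gamma_0$ conjugated into $\Aut(Y)^0$; the composite of these contractions is the desired $\Gamma_0$-equivariant birational morphism $\pi\colon X\to Y$. The only genuinely delicate point is the descent of the group action through each contraction together with the claim that it remains inside $\Aut(\,\cdot\,)^0$; the triviality of the action on $\NS$, the invariance of the $(-1)$-curves, and the termination by decreasing Picard rank are all formal.
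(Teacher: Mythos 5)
Your proposal is correct and follows essentially the same route as the paper, which compresses the argument into the sentence preceding the lemma: a $(-1)$-curve is determined by its class in $\NS(X)$, hence is invariant under $\Aut(X)^0$ (which acts trivially on $\NS(X)$), and one contracts such curves one by one. Your additional details — Castelnuovo's criterion, the equivariant descent through each contraction, the connectedness argument placing the image in $\Aut(X_1)^0$, and termination by decreasing Picard rank — are exactly the steps the paper leaves implicit, and they are all sound.
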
 

Let us recall the classification of minimal rational surfaces and describe their groups of automorphisms. First, 
we have the projective plane $\P^2_\bfk$, with $\Aut(\P^2_\bfk)= \PGL_3(\bfk)$ acting by linear projective transformations. 
Then comes the quadric $\P^1_\bfk\times \P^1_\bfk$, with 
$\Aut(\P^1_\bfk\times \P^1_\bfk)^0=\PGL_2(\bfk)\times \PGL_2(\bfk)$ 
acting by linear projective transformations on each factor; the group of automorphisms of the quadric 
is the semi-direct product of $\PGL_2(\bfk)\times \PGL_2(\bfk)$ with the group of order $2$ generated 
by the permutation of the two factors, $\eta(x,y)=(y,x)$. Then, for each integer $m\geq 1$, the Hirzebruch surface
$\Hirz_m$  is the projectivization of the rank $2$ bundle ${\mathcal{O}}\oplus {\mathcal{O}}(m)$ over $\P^1_\bfk$; 
it may be characterized as the unique ruled surface $Z\to \P^1_\bfk$ with a section $C$ of self-intersection $-m$. Its 
group of automorphisms is connected and preserves the ruling. This provides a homomorphism $\Aut(\Hirz_m)\to \PGL_2(\bfk)$
that describes the action on the base of the ruling, and it turns out that this homomorphism is surjective. If we choose coordinates for which 
the section $C$ intersects each fiber at infinity, the kernel $J_m$ of this homomorphism acts by transformations of type 
\[
(x,y)\mapsto (x,\alpha y+\beta(x))
\]
where $\beta(x)$ is a polynomial function of degree $\leq m$. In particular, $J_m$ is solvable.
In other words, $\Aut(\Hirz_m)$ is isomorphic to the group 
\[
\left( \GL_2(\bfk)/\mu_m\right)\ltimes W_m
\]
where $W_m$ is the linear representation of $\GL_2(\bfk)$ on homogeneous polynomials of degree $m$ in two variables,
and $\mu_m$ is the kernel of this representation: it is the subgroup of $\GL_2(\bfk)$ given by scalar multiplications by 
roots of unity of order dividing~$m$. 

\begin{lem}
Given the above conjugacy $\pi\colon X\to Y$, the subgroup $\pi\circ\Gamma\circ\pi^{-1}$ of $\Bir(Y)$ is contained in $\Aut(Y)$.
\end{lem}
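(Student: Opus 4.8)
The plan is to prove that, for every $g\in\Gamma$, the conjugate $h:=\pi\circ g\circ\pi^{-1}$ is a regular automorphism of $Y$. Since $\pi\colon X\to Y$ is a birational morphism of smooth surfaces and $g\in\Aut(X)$, the indeterminacy of $h$ can only come from $\pi$: if $\mathcal{E}\subset X$ denotes the union of the curves contracted by $\pi$ and $\{q_1,\dots,q_s\}=\pi(\mathcal{E})$ the corresponding base points, then $\Ind(h)\subset\{q_1,\dots,q_s\}$, and $h$ is biregular exactly when $g(\mathcal{E})=\mathcal{E}$, that is, when $g$ permutes the contracted curves. Equivalently, writing $\Gamma^*$ for the image of $\Gamma$ in $\O(\NS(X);q_X)$, one has to check that $g^*$ preserves the set of classes of the $\pi$-exceptional curves (the face of the nef cone that $\pi$ contracts). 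Recall that $\Gamma^*$ is finite by Proposition~\ref{pro:BHW-Automorphisms}, that it fixes $K_X$ and preserves the nef and effective cones, and that $\Gamma_0$ acts trivially on $\NS(X)$; in particular $\mathcal{E}$, and indeed every negative curve of $X$, is $\Gamma_0$-invariant, a negative irreducible curve being the unique effective divisor in its class.

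To pass from $\Gamma_0$-invariance to $\Gamma$-invariance of the contracted configuration I would use that $\Gamma_0\trianglelefteq\Gamma$ is infinite with Property~\FW, so that each $g\in\Gamma$ normalizes $\Gamma_0$ and must therefore preserve every locus canonically attached to the $\Gamma_0$-action. The crucial input is that $\Gamma_0$ is \emph{not} amenable: an infinite amenable group cannot have Property~\FW, hence the Zariski closure of $\Gamma_0$ in $\Aut(Y)^0$ is non-solvable. I would then argue along the classification of minimal $Y$. If $Y$ is ruled ($\P^1_\bfk\times\P^1_\bfk$ or $\Hirz_m$), the image of $\Gamma_0$ in the base $\PGL_2(\bfk)$ of the ruling is infinite, for otherwise $\Gamma_0$ would be virtually contained in the solvable kernel $J_m$ (respectively in a solvable subgroup), contradicting non-amenability; thus the ruling is the unique $\Gamma_0$-invariant rational fibration, $g$ preserves it, and one finishes with the fibered description of $\Aut(\Hirz_m)$ together with Lemma~\ref{lem:pgl2-fw} and Corollary~\ref{coro:Bass-k(C)}. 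If $Y=\P^2_\bfk$, then $\Gamma_0\subset\PGL_3(\bfk)$ fixes at most one point, since fixing two distinct points would force $\Gamma_0$ into a solvable subgroup; hence either $\Gamma_0$ has no fixed point, so $\pi$ has no base point and $X=Y$, or $\Gamma_0$ fixes a single point $q_1$, and then $g$ fixes $q_1$ as well, because $g^{-1}(q_1)$ is again $\Gamma_0$-fixed by $g\Gamma_0 g^{-1}=\Gamma_0$.

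In each case $g$ preserves the exceptional configuration $\mathcal{E}$, so $g$ descends through $\pi$ and $h=\pi\circ g\circ\pi^{-1}\in\Aut(Y)$; since this holds for all $g\in\Gamma$, we obtain $\pi\circ\Gamma\circ\pi^{-1}\subset\Aut(Y)$. I expect the main obstacle to be precisely the upgrade from $\Gamma_0$- to $\Gamma$-equivariance of the contraction $\pi$ supplied by the previous lemma. The difficulty is genuine: $\Gamma_0$ fixes the base points $q_j$, so the normalizer/indeterminacy argument by itself yields no information, and one must really exploit the non-amenability of $\Gamma_0$ to see that its invariant locus is canonical. The delicate sub-cases are the infinitely near base points lying over the unique fixed point $q_1$ on $\P^2_\bfk$, which require following the whole tower of blow-ups, and the irreducible embedding $\PGL_2\hookrightarrow\PGL_3$, where $\Gamma_0$ preserves a conic and has no fixed point at all, forcing $X=Y$ directly.
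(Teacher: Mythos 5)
Your proof starts from a premise the lemma does not grant: you take $g\in\Aut(X)$ for every $g\in\Gamma$. In the setting of this lemma, $\Gamma$ is only a subgroup of $\Bir(X)$; it is the finite-index normal subgroup $\Gamma_0$ --- not $\Gamma$ itself --- that lies in $\Aut(X)^0$ (this matches the hypothesis of Theorem~\ref{thm:classification-virtually-autom}: only a finite-index subgroup is assumed regular). An element $g\in\Gamma\smallsetminus\Gamma_0$ is an arbitrary birational transformation of $X$: it has its own indeterminacy points and contracted curves, located anywhere, with no relation to the $\pi$-exceptional locus $\mathcal{E}$. Consequently your opening reduction --- ``$\Ind(h)\subset\pi(\mathcal{E})$, and $h$ is biregular exactly when $g(\mathcal{E})=\mathcal{E}$'' --- is unfounded, and everything that follows (the $\Gamma_0$-invariance of negative curves, the uniqueness of the invariant fibration, the tower of infinitely near points over a fixed point of $\PGL_3$) addresses the wrong problem: even a complete proof that $g$ ``preserves $\mathcal{E}$'' in some birational sense would not make $h$ regular. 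For the same reason, applying Proposition~\ref{pro:BHW-Automorphisms} to all of $\Gamma$ (to get finiteness of $\Gamma^*$ in $\O(\NS(X);q_X)$) is illegitimate: that proposition concerns subgroups of $\Aut(X)$.

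The paper's proof bypasses $X$ and $\mathcal{E}$ entirely and works with $g$ as a birational transformation of the minimal surface $Y$. Since $g$ normalizes $\Gamma_0$ and $\Gamma_0$ acts on $Y$ by automorphisms, the finite set $\Ind(g)\subset Y$ and the exceptional curves of $g$ are $\Gamma_0$-invariant --- this is the normalizer trick you only gesture at for the single point $q_1$. One then shows these sets must be empty. For $Y=\P^1_\bfk\times\P^1_\bfk$ or $\Hirz_m$, Lemma~\ref{lem:pgl2-fw} applied to the projection(s) to $\P^1_\bfk$ shows an infinite group with Property~{\FW} admits no nonempty finite invariant set. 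For $Y=\P^2_\bfk$ this statement is false --- as you yourself observe, $\Gamma_0$ may fix a point, which is exactly the example discussed in the remark following the theorem --- so the paper uses a finer invariant: if $g$ were not regular, it would have a contracted curve $C$ with an indeterminacy point $q\in C$; a finite-index subgroup of $\Gamma_0$ then fixes $q$, $C$, and one of the finitely many tangent lines of $C$ at $q$, hence lies in the stabilizer of a flag in $\PGL_3(\bfk)$, which is solvable, contradicting Property~{\FW} via Lemma~\ref{lem:pgl2-fw}. Your instinct that non-virtual-solvability of $\Gamma_0$ is the engine is correct, but it must be applied to the indeterminacy and exceptional data of $g$ on $Y$, not to the contraction $\pi$; the ``upgrade from $\Gamma_0$- to $\Gamma$-equivariance of $\pi$'' that you single out as the main obstacle is simply not needed in the correct argument.
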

 
\begin{proof}
Assume that the surface $Y$ is the quadric $\P^1_\bfk\times \P^1_\bfk$. Then, according to Theorem~\ref{thm:Bass}, 
$\Gamma_0$ is conjugate to a subgroup 
of $\PGL_2(\oZ)\times \PGL_2(\oZ)$. If $g$ is an element of $\Gamma$, its indeterminacy locus is a finite 
subset $\Ind(g)$ of $\P^1_\bfk\times \P^1_\bfk$ that is invariant under the action of $\Gamma_0$, because 
$g$ normalizes $\Gamma_0$. Since $\Gamma_0$ is infinite and has Property {\FW}, this set $\Ind(g)$ is empty (Lemma~\ref{lem:pgl2-fw}). Thus, 
$\Gamma$ is contained in $\Aut(\P^1_\bfk\times \P^1_\bfk)$. 

The same argument applies for Hirzebruch surfaces. Indeed, $\Gamma_0$ is an infinite subgroup of $\Aut(\Hirz_m)$ 
with Property {\FW}. Thus, up to conjugacy, its projection in $\PGL_2(\bfk)$ is contained in $\PGL_2(\oZ)$. 
If it were finite, a finite index
subgroup of $\Gamma_0$ would be contained in the solvable group $J_m$, and would therefore be finite too by Property {\FW}; 
this would contradict $\vert \Gamma_0\vert =\infty$. Thus, the projection of $\Gamma_0$ in $\PGL(\oZ)$ is infinite. 
If $g$ is an element of $\Gamma$, $\Ind(g)$  
is a finite, $\Gamma_0$-invariant subset, and by looking at the projection of this set in $\P^1_\bfk$ one 
deduces that it is empty (Lemma~\ref{lem:pgl2-fw}). This proves that $\Gamma$ is contained in $\Aut(\Hirz_m)$. 

Let us now assume that $Y$ is the projective plane. Fix an element $g$ of $\Gamma$, and assume that
$g$ is not an automorphism of $Y=\P^2$; the indeterminacy and exceptional sets of $g$ are $\Gamma_0$ invariant. 
Consider an irreducible curve $C$ in the exceptional set of $g$, together with an indeterminacy point $q$
of $g$ on $C$. Changing $\Gamma_0$ in a finite index subgroup, we may assume that $\Gamma_0$
fixes $C$ and $q$; in particular, $\Gamma_0$ fixes $q$, and permutes the tangent lines of $C$ through 
$q$. But the algebraic subgroup of $\PGL_3(\bfk)$ preserving a point $q$ and a line through
$q$ does not contain any infinite group with Property {\FW} (Lemma~\ref{lem:pgl2-fw}).
Thus, again, $\Gamma$ is contained in $\Aut(\P^2_\bfk)$.
\end{proof}

\subsubsection{Conclusion, in Case (1)} 
Putting everything together, we obtain the following particular case of Theorem~B.

\begin{thm}\label{thm:classification-virtually-autom}
Let $X$ be a smooth projective surface over an algebraically closed field $\bfk$. 
Let $\Gamma$ be an infinite subgroup of $\Bir(X)$ with Property {\FW}. If a finite index
subgroup of $\Gamma$ is contained in $\Aut(X)$, there is a birational morphism $\varphi\colon X\to Y$ that conjugates 
$\Gamma$ to a subgroup  $\Gamma_Y$
of $\Aut(Y)$, with $Y$  in the following list:
\begin{enumerate}
\item $Y$ is the product of a curve $C$ by $\P^1_\bfk$, the field $\bfk$ has
characteristic $0$, and a finite index subgroup $\Gamma'_Y$ of $\Gamma_Y$ is contained in 
 $\PGL_2(\oZ)$, acting by linear  projective transformations on the second factor;
\item $Y$ is $\P^1_\bfk\times \P^1_\bfk$, the field $\bfk$ has characteristic $0$,  and $\Gamma_Y$ is contained in $\PGL_2(\oZ)\times \PGL_2(\oZ)$;
\item $Y$ is a Hirzebruch surface $\Hirz_m$ and $\bfk$ has characteristic $0$; 
\item $Y$ is the projective plane $\P^2_\bfk$.
\end{enumerate}
In particular, $Y=\P^2_\bfk$ if the characteristic of $\bfk$ is positive.
\end{thm}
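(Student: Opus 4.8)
The plan is to prove the theorem by splitting according to whether the surface $X$ is rational, assembling the results already established for the two regimes. In both cases the first move is to replace $\Gamma$ by a well-positioned finite-index subgroup. Since a finite-index subgroup $\Gamma_1$ of $\Gamma$ lies in $\Aut(X)$ and inherits Property {\FW} (by its stability under passage to finite-index subgroups), Proposition~\ref{pro:BHW-Automorphisms} supplies a finite-index subgroup of $\Gamma_1$ — hence of $\Gamma$ — contained in $\Aut(X)^0$. Replacing it by its normal core in $\Gamma$, which is still of finite index and, being contained in the original subgroup, still lies inside $\Aut(X)^0$, I obtain an infinite normal finite-index subgroup $\Gamma_0\triangleleft\Gamma$ with $\Gamma_0\subset\Aut(X)^0$ and Property~{\FW}. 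This is exactly the standing hypothesis of \S\ref{par:non-rational-surfaces} and \S\ref{par:auto-rational-surfaces}.

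If $X$ is not rational, I invoke the Proposition of \S\ref{par:non-rational-surfaces} directly. It forces $\bfk$ to have characteristic $0$ (in positive characteristic $\Gamma$ would be finite) and conjugates $\Gamma$ into $\Aut(C\times\P^1_\bfk)$ for a curve $C$ of genus $\ge 1$, with a finite-index subgroup landing in $\PGL_2(\oZ)$ acting on the second factor; this is case~(1).

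If $X$ is rational, I run the two lemmas of \S\ref{par:auto-rational-surfaces}. Since $\Gamma_0\subset\Aut(X)^0$ fixes the class of every $(-1)$-curve, these curves can be contracted one at a time $\Gamma_0$-equivariantly, producing a birational morphism $\pi\colon X\to Y$ onto a minimal rational surface; by the classification of minimal rational surfaces, $Y$ is $\P^2_\bfk$, $\P^1_\bfk\times\P^1_\bfk$, or a Hirzebruch surface $\Hirz_m$. Because $\Gamma$ normalizes $\Gamma_0$, the indeterminacy locus of any $g\in\pi\Gamma\pi^{-1}$ is a finite $\pi\Gamma_0\pi^{-1}$-invariant set; as $\pi\Gamma_0\pi^{-1}$ is an infinite subgroup of $\Aut(Y)$ with Property~{\FW}, Lemma~\ref{lem:pgl2-fw} shows, through the projection to the relevant $\PGL_2$-factor, that this locus is empty, so $\pi\Gamma\pi^{-1}\subset\Aut(Y)$. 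It remains to refine the conclusion: applying Theorem~\ref{thm:Bass} to the $\PGL_2(\bfk)$-factor(s) acting on the ruling conjugates $\Gamma_0$ into $\PGL_2(\oZ)$ (or into $\PGL_2(\oZ)\times\PGL_2(\oZ)$ on $\P^1_\bfk\times\P^1_\bfk$); and since in positive characteristic $\PGL_2(\bfk)$ has no infinite subgroup with Property~(FA), hence none with {\FW}, the targets $\P^1_\bfk\times\P^1_\bfk$ and $\Hirz_m$ are impossible there, leaving $Y=\P^2_\bfk$.

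The bulk of the content is genuinely imported from the earlier results, so the main obstacle is organizational rather than geometric: one must check that the finite-index subgroup produced by Proposition~\ref{pro:BHW-Automorphisms} can be upgraded to a \emph{normal} finite-index subgroup still contained in $\Aut(X)^0$ (handled by the normal-core remark, which works precisely because the core sits inside the original subgroup), and that the contraction $\pi\colon X\to Y$ is equivariant for the \emph{whole} group $\Gamma$ and not merely for $\Gamma_0$ — this is what the normality of $\Gamma_0$ buys, via the $\Gamma_0$-invariance of indeterminacy loci. The one point of real substance is the positive-characteristic conclusion, where I lean on Theorem~\ref{thm:Bass}(1) to eliminate every model except $\P^2_\bfk$.
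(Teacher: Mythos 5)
Your overall route is the paper's own assembly: produce a normal finite-index subgroup $\Gamma_0\subset\Aut(X)^0$ via Proposition~\ref{pro:BHW-Automorphisms} plus a normal core, dispose of non-rational $X$ by the proposition of \S\ref{par:non-rational-surfaces}, and, in the rational case, contract $(-1)$-curves $\Gamma_0$-equivariantly to a minimal model $Y$ and then upgrade the whole group $\Gamma$ to automorphisms of $Y$ using normality of $\Gamma_0$ and the $\Gamma_0$-invariance of indeterminacy loci; the positive-characteristic reduction to $\P^2_\bfk$ is also as in the paper.

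There is, however, a genuine gap in the rational case when $Y=\P^2_\bfk$. Your mechanism --- ``$\Ind(g)$ is a finite, non-empty, $\Gamma_0$-invariant set, so Lemma~\ref{lem:pgl2-fw}, through the projection to the relevant $\PGL_2$-factor, gives a contradiction'' --- has no meaning for $\P^2_\bfk$: there is no ruling, and the statement it would require, namely that an infinite Property~{\FW} subgroup of $\PGL_3(\bfk)$ cannot preserve a non-empty finite subset of $\P^2_\bfk$, is simply false. The stabilizer of a point of $\P^2_\bfk$ is a maximal parabolic subgroup of $\PGL_3(\bfk)$; it contains a copy of $\GL_2(\bfk)$ (block matrices fixing $[0:0:1]$), hence infinite {\FW} subgroups such as the image of $\SL_2(\Z[\sqrt{5}])$. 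So for $\P^2_\bfk$ the invariance of $\Ind(g)$ alone yields nothing. The paper's proof of the corresponding lemma in \S\ref{par:auto-rational-surfaces} needs an extra geometric idea: choose an irreducible curve $C$ in the exceptional set of $g$ (this set is also $\Gamma_0$-invariant, since $g$ normalizes $\Gamma_0$) together with an indeterminacy point $q\in C$; after passing to a finite-index subgroup, $\Gamma_0$ fixes $C$ and $q$, hence permutes the finitely many tangent lines of $C$ at $q$, hence, after a further finite-index passage, preserves an \emph{incident} flag consisting of a point and a line through it. The stabilizer of such a flag in $\PGL_3(\bfk)$ is a Borel subgroup, hence solvable, so by Lemma~\ref{lem:pgl2-fw} and Property~{\FW} it has no infinite {\FW} subgroup --- contradicting that $\Gamma_0$ is infinite. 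The incidence is essential: the block-diagonal example above preserves a point and a (non-incident) line. Without this flag argument, your case $Y=\P^2_\bfk$ --- precisely the only case that survives in positive characteristic --- is unproved. (A minor further point: for Hirzebruch surfaces your argument also tacitly needs that the projection of $\Gamma_0$ to $\PGL_2(\bfk)$ is infinite, which the paper deduces from solvability of the kernel $J_m$ of $\Aut(\Hirz_m)\to\PGL_2(\bfk)$.)
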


\begin{rem}
 Denote by $\varphi \colon X\to Y$ the birational 
morphism given by the theorem. Changing $\Gamma$ in a finite index subgroup, we may assume that 
it acts by automorphisms on both $X$ and $Y$.

If $Y=C\times  \P^1$, then $\varphi$ is in fact an isomorphism. To prove this fact, denote 
by $\psi$ the inverse of $\varphi$. The  indeterminacy set $\Ind(\psi)$ is $\Gamma_Y$ invariant because both $\Gamma$ and $\Gamma_Y$ act by 
automorphisms. From Lemma~\ref{lem:pgl2-fw}, applied to $\Gamma'_Y\subset \PGL_2(\bfk)$, we deduce that $\Ind(\psi)$  is empty and  $\psi$ is an isomorphism. 
The same argument implies that the conjugacy is an isomorphism if $Y=\P^1_\bfk\times \P^1_\bfk$ or a Hirzebruch surface $\Hirz_m$, $m\geq 1$. 
 
Now, if $Y$ is $\P^2_\bfk$, $\varphi$ is not always an isomorphism. For instance, $\SL_2(\C)$ acts on $\P^2_\bfk$ with a fixed
point, and one may blow up this point to get a new surface with an action of groups with Property {\FW}. But this is the only possible example, 
{\sl{i.e.}} $X$ is either $\P^2_\bfk$, or a single blow-up of $\P^2_\bfk$ (because $\Gamma\subset \PGL_3(\C)$ can not preserve more
than one base point for $\varphi^{-1}$ without loosing Property {\FW}).
\end{rem}

\subsection{Invariant fibrations}\label{par:invariant-fibrations}

We now assume that $\Gamma$ has Property~{\FW} and acts by automorphisms on $\U\subset X$, 
and that the boundary $\partial X=X\smallsetminus \U$ is the union of $\ell \geq 1$ pairwise disjoint rational curves $E_i$; each of them has self-intersection $E_i^2=0$ and is contracted
by at least one element of $\Gamma$. This corresponds to the fourth possibility in Gizatullin's Proposition~\ref{pro:Gizatullin-boundary}.
Since $E_i\cdot E_j=0$, the Hodge index theorem implies that the classes $e_i=[E_i]$ span a unique line in $\NS(X)$, and that
$[E_i]$ intersects non-negatively every curve.

From Section~\ref{par:non-rational-surfaces}, we may, and do assume that $X$ is a rational surface.
In particular, the Euler characteristic of the structural sheaf is equal to $1$: $\chi({\mathcal{O}}_X)=1$, 
and Riemann-Roch formula gives 
\[
h^0(X, E_1)-h^1(X,E_1)+h^2(X,E_1)= \frac{E_1^2-K_X\cdot E_1}{2}+1.
\]
The genus formula implies $K_X\cdot E_1=-2$, and Serre duality shows that $h^2(X, E_1)=h^0(X, K_X-E_1)=0$
because otherwise $-2=(K_X-E_1)\cdot E_1$ would be non-negative (because $E_1$ 
intersects non-negatively every curve). From this, we obtain
\[
h^0(X,E_1)= h^1(X,E_1)+2\geq 2.
\]
If $F$ is a member of the complete linear system $\vert E_1\vert$, then $F\cdot E_1=E_1\cdot E_1=0$, and $F$ is disjoint from the smooth irreducible curve $E_1$. Thus, $\vert E_1\vert$ is base point free, and $\vert E_1\vert$ determines a fibration $\pi\colon X\to B$ onto a curve $B$; in fact $B=\P^1_\bfk$ because $X$ is a rational surface, and $H^0(X,E_1)=2$ because ${\mathcal{O}}(E_1)$ is the pull back of an ample line bundle on $B$ (see Theorem 2.1.27 in \cite{Lazarsfeld}). The curve $E_1$, as well as the
$E_i$ for $i\geq 2$, are fibers of $\pi$. 

If $f$ is an automorphism of $\U$ and $F\subset \U$ is a fiber of $\pi$, then $f(F)$ is a (complete) rational curve. Its projection 
$\pi(f(F))$ is contained in the affine curve $\P^1_\bfk\smallsetminus \cup_i\pi(E_i)$ and must therefore be reduced to a point. Thus, $f(F)$ is a fiber of $\pi$
and $f$ preserves the fibration. This proves the following lemma.

\begin{lem}
There is a fibration $\pi\colon X\to \P^1_\bfk$ such that 
\begin{enumerate}
\item every component $E_i$ of $\partial X$ is a fiber of $\pi$, and  $\U=\pi^{-1}(\V)$ for 
an open subset $\V\subset \P^1_\bfk$;
\item the general fiber  of $\pi$ is a smooth rational curve;
\item $\Gamma$ permutes the fibers of $\pi$: there is a morphism $\rho\colon \Gamma\to \PGL_2(\bfk)$
such that $\pi\circ f=\rho(f)\circ \pi$ for every $f\in \Gamma$.
\end{enumerate}
\end{lem}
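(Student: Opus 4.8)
The plan is to take $\pi$ to be the fibration defined by the complete linear system $|E_1|$ and to verify the three assertions in turn, most of the geometric input being already contained in the Riemann--Roch computation preceding the statement.

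First I would produce the fibration. Since $h^0(X,E_1)=2$, the system $|E_1|$ is a pencil, and since two general members meet in $E_1^2=0$ points it is base-point free; it therefore defines a morphism $\pi\colon X\to\P^1_\bfk$. As $E_1$ is already a reduced, irreducible member of $|E_1|$, the pullback of its image point is the reduced divisor $E_1$, so the Stein factorization of $\pi$ has degree $1$: the fibers of $\pi$ are connected and a general fiber is irreducible. Because the $E_i$ are smooth rational curves of self-intersection $0$, adjunction gives $p_a(E_1)=\tfrac12(E_1^2+K_X\cdot E_1)+1=0$, and since all fibers are algebraically equivalent a general fiber is an integral curve of arithmetic genus $0$, hence a smooth rational curve; this is assertion~(2).

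For assertion~(1), each $E_i$ is irreducible with $E_i\cdot E_1=0$ (the boundary components are pairwise disjoint and $E_1$ is one of them), so $E_i$ is contained in some fiber $F$. Writing $F=mE_i+\sum_j n_jC_j$ and intersecting with $E_i$ yields $0=F\cdot E_i=\sum_j n_j(C_j\cdot E_i)$; as $F$ is connected and each $C_j\cdot E_i\ge 0$, no component $C_j$ can occur, so $F=mE_i$ and $E_i$ is a reduced fiber. The $E_i$ being disjoint, they lie over distinct points $s_1,\dots,s_\ell$; with $S=\{s_1,\dots,s_\ell\}$ and $\V=\P^1_\bfk\smallsetminus S$ one gets $\bigcup_i E_i=\pi^{-1}(S)$ and hence $\U=X\smallsetminus\bigcup_i E_i=\pi^{-1}(\V)$.

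For assertion~(3), fix $f\in\Gamma$ and a general fiber $F\subset\U$. Since $f$ is an automorphism of $\U$, the image $f(F)\subset\U$ is a complete rational curve, so $\pi(f(F))$ is a complete connected subvariety of $\V$. Here the hypothesis $\ell\ge 2$ enters: $S$ has at least two points, so $\V$ is affine, and a complete connected subvariety of an affine variety is a point; thus $f(F)$ lies in a fiber. Running the same argument for $f^{-1}$ shows that $f$ carries fibers to fibers, so it descends to an automorphism of $\V$ that extends to $\rho(f)\in\Aut(\P^1_\bfk)=\PGL_2(\bfk)$ with $\pi\circ f=\rho(f)\circ\pi$, and $f\mapsto\rho(f)$ is visibly a homomorphism. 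The only point requiring care is the Stein-factorization step ensuring connected fibers (so that ``$E_i$ is a fiber'' is meaningful) together with reducedness of the general fiber in positive characteristic; once that is in place, the verticality argument for the $E_i$ and the affineness argument for~(3) are routine.
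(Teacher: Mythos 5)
Your proof is correct and follows essentially the same route as the paper: the pencil $|E_1|$ produced by the preceding Riemann--Roch computation defines $\pi$, the disjointness and numerics of the $E_i$ make them fibers, and the completeness-versus-affineness argument (a complete rational curve in $\U$ must project to a point of the affine curve $\V$) forces elements of $\Gamma$ to permute the fibers, yielding $\rho$. You in fact supply more detail than the paper (base-point freeness, Stein factorization, the intersection argument for assertion (1)), and the one point you flag---reducedness of the general fiber in positive characteristic---closes immediately: an irreducible fiber equal to $nC$ as a divisor would give $p_a(C)=1+\tfrac12(C^2+K_X\cdot C)=1-\tfrac{1}{n}\in\Z$, forcing $n=1$.
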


The open subset $\V\subsetneq \P^1_\bfk$ is invariant under the action of $\rho(\Gamma)$; hence $\rho(\Gamma)$ 
 is finite by Property~{\FW} and Lemma~\ref{lem:pgl2-fw}. 
Let $\Gamma_0$ be the kernel of this morphism. Let $\varphi\colon X\dasharrow \P^1_\bfk\times \P^1_\bfk$ be
a birational map that conjugates the fibration $\pi$ to the first projection $\tau\colon \P^1_\bfk\times \P^1_\bfk\to \P^1_\bfk$.
Then,  $\Gamma_0$ is conjugate to a subgroup 
of $\PGL_2(\bfk(x))$ acting on $\P^1_\bfk\times \P^1_\bfk$ by linear projective transformations of the fibers of $\tau$.
From Corollary~\ref{coro:Bass-k(C)}, a new conjugacy by an element of  $\PGL_2(\bfk(x))$ changes $\Gamma_0$ in
 an infinite subgroup of $\PGL_2(\oZ)$. 
Then, as in Sections~\ref{par:non-rational-surfaces} 
and~\ref{par:auto-rational-surfaces} we conclude that $\Gamma$ becomes a subgroup of $\PGL_2(\oZ)\times \PGL_2(\oZ)$, 
with a finite projection on the first factor.

\begin{pro}
Let $\Gamma$ be an infinite group with Property {\FW}, with $\Gamma\subset \Aut(\U)$, and $\U\subset Z$  as in case (4)
of Proposition~\ref{pro:Gizatullin-boundary}.
There exists a birational map $\psi\colon Z\dasharrow \P^1_\bfk\times \P^1_\bfk$ that conjugates 
$\Gamma$ to a subgroup of $\PGL_2(\overline{\Z})\times \PGL_2(\oZ)$, 
with a finite projection on the first factor.
\end{pro}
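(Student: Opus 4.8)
The plan is to collect the structural facts just established into a single statement and then to promote the conclusion from the finite-index normal subgroup $\Gamma_0=\ker\rho$ to all of $\Gamma$. First I would record the fibration produced by the preceding lemma: since we are in case~(4), the classes $[E_i]$ lie on a single ray of $\NS(Z)$ (by the Hodge index theorem, using $E_i\cdot E_j=0$ and $E_i^2=0$), and the Riemann--Roch computation together with $K_Z\cdot E_1=-2$ and the vanishing $h^2(Z,E_1)=0$ yields $h^0(Z,E_1)=2$, hence a fibration $\pi\colon Z\to\P^1_\bfk$ whose fibers include all the $E_i$ and whose generic fiber is a smooth rational curve. Any $f\in\Gamma$ sends a fiber $F\subset\U$ to a complete rational curve whose image under $\pi$ lands in the affine curve $\P^1_\bfk\smallsetminus\bigcup_i\pi(E_i)$ and is therefore a point; thus $f$ permutes the fibers of $\pi$, and we obtain a homomorphism $\rho\colon\Gamma\to\PGL_2(\bfk)$ with $\pi\circ f=\rho(f)\circ\pi$.

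Next I would exploit Property~{\FW}. The image $\rho(\Gamma)$ preserves the proper open set $\V\subsetneq\P^1_\bfk$, equivalently its nonempty finite complement; by Lemma~\ref{lem:pgl2-fw}(2), a subgroup of $\PGL_2(\bfk)$ with Property~{\FW} fixing a nonempty finite set is finite, so $\rho(\Gamma)$ is finite and $\Gamma_0:=\ker\rho$ has finite index in $\Gamma$, remaining infinite and retaining Property~{\FW}. Choosing a birational map $\varphi\colon Z\dasharrow\P^1_\bfk\times\P^1_\bfk$ that conjugates $\pi$ to the first projection, $\Gamma_0$ becomes a subgroup of $\PGL_2(\bfk(x))$ acting by projective linear transformations in the fibers. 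Since $\Gamma_0$ is infinite with Property~{\FW}, hence with Property~(FA), Corollary~\ref{coro:Bass-k(C)} applies: the field $\bfk$ has characteristic $0$, and a further conjugacy by an element of $\PGL_2(\bfk(x))$ carries $\Gamma_0$ into $\PGL_2(\oZ)$, acting by constant projective transformations on the $\P^1_\bfk$-factor. The sought map $\psi$ is the composite of $\varphi$ with this last conjugacy.

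The main obstacle is the final upgrade from $\Gamma_0$ to $\Gamma$, namely checking that the full group $\Gamma$ acts by \emph{automorphisms} of $\P^1_\bfk\times\P^1_\bfk$ in these coordinates; here I would reproduce the normalizer argument of Sections~\ref{par:non-rational-surfaces} and~\ref{par:auto-rational-surfaces}. For any $g\in\Gamma$ the indeterminacy set $\Ind(g)$ is a finite subset of $\P^1_\bfk\times\P^1_\bfk$ invariant under $\Gamma_0$, because $g$ normalizes $\Gamma_0$ and $\Gamma_0$ acts by automorphisms. Since $\Gamma_0$ fixes each fiber $\{x\}\times\P^1_\bfk$ and acts on the second factor through its image in $\PGL_2(\oZ)$, the projection of $\Ind(g)$ to the second factor is a finite $\Gamma_0$-invariant subset of $\P^1_\bfk$; but $\Gamma_0$ is infinite with Property~{\FW}, so by Lemma~\ref{lem:pgl2-fw} it fixes no nonempty finite set, forcing $\Ind(g)=\emptyset$. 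Hence $\Gamma\subset\Aut(\P^1_\bfk\times\P^1_\bfk)$, and since $\Gamma$ preserves the first projection with finite image $\rho(\Gamma)$ in $\PGL_2(\bfk)$, it is contained in $\PGL_2(\oZ)\times\PGL_2(\oZ)$ with finite projection to the first factor, as required.
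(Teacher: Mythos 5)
Your proposal follows the paper's own proof essentially step for step: the Hodge-index and Riemann--Roch construction of the fibration $\pi$ and of $\rho\colon\Gamma\to\PGL_2(\bfk)$, the finiteness of $\rho(\Gamma)$ via Property~{\FW} and Lemma~\ref{lem:pgl2-fw}, the conjugation of $\Gamma_0=\ker\rho$ into $\PGL_2(\oZ)$ via Corollary~\ref{coro:Bass-k(C)}, and then the normalizer/indeterminacy argument for the full group --- which is precisely what the paper compresses into the phrase ``as in Sections~\ref{par:non-rational-surfaces} and~\ref{par:auto-rational-surfaces}''. The only point where you are exactly as terse as the paper is the final assertion that all of $\Gamma$ (not merely $\Gamma_0$) lands in $\PGL_2(\oZ)\times\PGL_2(\oZ)$; in both texts this rests implicitly on the fact that elements normalizing the Zariski-dense group $\Gamma_0\subset\PGL_2(\oZ)$ have algebraic entries (Lemma~\ref{algl}, as in the proof of Theorem~\ref{thm:Bass}), together with the conjugation of the finite group $\rho(\Gamma)$ into $\PGL_2(\oZ)$.
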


\subsection{Completions by zigzags}\label{par:completion-zigzag}

Two cases remain to be studied: $\partial Z$ can be a chain of rational curves (a zigzag in Gizatullin's terminology)
or a cycle of rational curves (a loop in Gizatullin's terminology). Cycles are considered in Section~\ref{par:Cycles-Thompson}. 
In this section, we rely on difficult results of  Danilov and Gizatullin to treat the case of chains of rational curves (i.e. case (3)
in Proposition~\ref{pro:Gizatullin-boundary}). Thus, in this section
\begin{enumerate}
\item[(i)] $\partial X$ is a chain of smooth rational curves $E_i$
\item[(ii)] $\U=X\smallsetminus \partial X$ is an affine surface (singularities are  allowed)
\item[(iii)] every irreducible component $E_i$ is contracted to a point of $\partial X$ by at least one element 
of $\Gamma\subset \Aut(\U) \subset \Bir(X)$.
\end{enumerate}

In \cite{Danilov-Gizatullin:I,Danilov-Gizatullin:II}, Danilov and Gizatullin introduce a set of ``standard completions'' of the affine surface $\U$. 
As in Section~\ref{par:constraints-boundary}, a
completion (or more precisely a ``marked completion'') is an embedding $\iota\colon \U\to Y$ into a
complete surface such that $\partial Y=Y\smallsetminus \iota(\U)$ is a curve (this boundary curve
may be reducible). Danilov and Gizatullin only consider completions for which $\partial Y$ is a chain 
of smooth rational curves and $Y$ is smooth in a neighborhood of $\partial Y$; the surface $X$ provides such a completion. 
Two  completions $\iota\colon \U\to Y$ and $\iota'\colon \U\to Y'$ are isomorphic if the birational map
$\iota'\circ \iota^{-1}\colon Y\to Y'$ is an isomorphism; in particular, the boundary curves are identified by this isomorphism.
The group $\Aut(\U)$ acts by pre-composition on the set of isomorphism classes of (marked) completions. 

Among all possible completions, Danilov and Gizatullin distinguish a class of ``standard (marked) completions'', for which 
we refer to \cite{Danilov-Gizatullin:I} for a definition. There are elementary links (corresponding to certain birational mappings $Y\dasharrow Y'$)
between standard completions, and one can construct a graph $\Delta_\U$ whose vertices are standard completions;
there is an edge between two completions if one can pass from one to the other by an elementary link. 

\begin{eg}
A completion is $m$-standard, for some $m\in \Z$, if the boundary curve $\partial Y$ is a chain of $n+1$ consecutive
rational curves $E_0$, $E_1$, $\ldots$, $E_n$ ($n\geq 1$) such that 
\[
E_0^2=0, \;\;  E_1^2=-m, \; {\text{ and }} \; E_i^2=-2\;\;  \text{if}\;\;  i\geq 2.
\] 
Blowing-up the intersection point $q=E_0\cap E_1$, one creates a new chain starting by $E_0'$ with $(E_0')^2=-1$;
blowing down $E_0'$, one creates a new $(m+1)$-standard completion. This is one of the elementary links.
\end{eg}

Standard completions are defined by constraints on the self-intersections of the components $E_i$. Thus, the action of $\Aut(\U)$ 
on completions permutes the standard completions; this action determines a morphism from $\Aut(\U)$ to the group of
isometries (or automorphisms) of the graph $\Delta_\U$ (see \cite{Danilov-Gizatullin:I}):
\[
\Aut(\U)\to \Iso(\Delta_\U).
\]
\begin{thm}[Danilov and Gizatullin, \cite{Danilov-Gizatullin:I, Danilov-Gizatullin:II}] The graph $\Delta_\U$ of all isomorphism classes of 
standard completions of $\U$ is a tree.
The group $\Aut(\U)$ acts by isometries of this tree. The stabilizer of a vertex $\iota\colon \U\to Y$ is the subgroup 
$G(\iota)$ of automorphisms of the complete surface $Y$ that fix the curve $\partial Y$. This group is an algebraic subgroup of $\Aut(Y)$.
\end{thm}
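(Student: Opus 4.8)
The plan is to split the statement into one genuinely hard combinatorial assertion, imported from Danilov and Gizatullin, and three more formal parts describing the $\Aut(\U)$-action and its vertex stabilizers. First I would fix the precise notions involved: a standard completion is a completion of $\U$ whose boundary is a chain of smooth rational curves with a normalized self-intersection sequence (as in the $m$-standard example above), and an elementary link is the surgery that blows up a boundary node and then blows down the resulting exceptional curve of the first kind (again as in that example). The preliminary task is to check that every completion of $\U$ by a zigzag can be brought to a standard one by boundary blow-ups and blow-downs, and that two standard completions related by a single such surgery are joined by an elementary link; this is exactly the set-up of \cite{Danilov-Gizatullin:I}, and the graph $\Delta_\U$ is then well defined.

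Next I would prove that $\Delta_\U$ is connected: given two standard completions one reduces both to a common reference completion by repeatedly applying links that decrease a complexity measure (for instance the length of the chain together with the defect of its self-intersection sequence from the minimal one). The delicate point, and the main obstacle, is acyclicity. To show $\Delta_\U$ has no loop one must control how the self-intersection sequence of the zigzag evolves along a reduced path of elementary links. Danilov and Gizatullin's analysis of zigzags, which runs parallel to the theory of continued fractions, supplies a monotone invariant along reduced paths (equivalently, a local-confluence and uniqueness-of-reduced-word statement) that forbids a nontrivial reduced loop. I would import this combinatorial heart of \cite{Danilov-Gizatullin:I, Danilov-Gizatullin:II} rather than reprove it; everything else is comparatively soft.

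The action of $\Aut(\U)$ on $\Delta_\U$ is then immediate: standardness and the elementary links are defined by intrinsic conditions on the self-intersections of the boundary components, and these are preserved by any isomorphism of marked completions; hence precomposition by an element of $\Aut(\U)$ carries standard completions to standard completions and edges to edges, so $\Aut(\U)$ acts by graph automorphisms, which are isometries for the path metric. For the stabilizer of a vertex $\iota\colon \U\to Y$, an element $g\in\Aut(\U)$ fixes $\iota$ precisely when the completion $\iota\circ g^{-1}$ is isomorphic to $\iota$, i.e.\ when $g$ extends to a regular automorphism $h$ of $Y$; any such $h$ automatically preserves $\partial Y=Y\smallsetminus\iota(\U)$, and conversely every automorphism of $Y$ fixing $\partial Y$ restricts to an automorphism of $\U$. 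This identifies the stabilizer with $G(\iota)=\{h\in\Aut(Y): h(\partial Y)=\partial Y\}$.

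Finally I would show that $G(\iota)$ is algebraic. Each element of $G(\iota)$ permutes the finitely many irreducible components of $\partial Y$ and hence preserves the subspace $V\subset\NS(Y)$ they span. Because $\U$ is affine, the boundary supports a big and nef divisor $D$, constructed exactly as in Section~\ref{par:constraints-boundary}; in particular $D^2>0$, so $V$ has signature $(1,\dim V-1)$ and its orthogonal complement is negative definite by the Hodge index theorem. An element of $G(\iota)$ thus acts on $V$ through the finite permutation group of the components and on $V^\perp$ as an isometry of a negative definite lattice, so its image in $\GL(\NS(Y))$ is finite, exactly as in Proposition~\ref{pro:BHW-Automorphisms}. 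By Theorem~\ref{thm:pseudo-automorphisms-neron-severi} the kernel of the action on $\NS(Y)$ is a finite extension of $\Aut(Y)^0$, and therefore $G(\iota)$ is an algebraic subgroup of $\Aut(Y)$, completing the plan.
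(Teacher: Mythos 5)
Your proposal matches the paper's treatment of this statement: the paper likewise imports the hard combinatorial assertion (that $\Delta_\U$ is a tree, which it calls ``the crucial assertion'') from Danilov--Gizatullin rather than proving it, treats the action and the identification of vertex stabilizers as formal, and derives algebraicity of $G(\iota)$ from the $G(\iota)$-invariant big and nef divisor supported on $\partial Y$ constructed in Section~\ref{par:constraints-boundary}. Your Hodge-index/lattice argument for finiteness of the image in $\GL(\NS(Y))$ is simply a fleshed-out version of the paper's one-line justification, resting on the same key input.
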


The last property means that $G(\iota)$ is an algebraic group that acts algebraically on $Y$. It coincides
with the subgroup of $\Aut(Y)$ fixing the boundary $\partial Y$; the fact that it is algebraic follows from 
the existence of a  $G(\iota)$-invariant, big and nef divisor which is supported on $\partial Y$ (see the last sentence
of Proposition~\ref{pro:Gizatullin-boundary}).  
The crucial assertion in this theorem is that $\Delta_\U$ is a simplicial tree (typically,   infinitely many edges emanate from each vertex).  
There are sufficiently many links 
 to assure connectedness, but not too many in order to prevent the existence of 
 cycles in the graph $\Delta_\U$. 

\begin{cor}
If $\Gamma$ is a subgroup of $\Aut(\U)$ that has the fixed point property on trees, then $\Gamma$ is contained 
in $G(\iota)\subset \Aut(Y)$  for some completion $\iota\colon \U\to Y$. 
\end{cor}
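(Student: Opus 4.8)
The plan is to feed the fixed-point hypothesis into the action of $\Gamma$ on the Danilov--Gizatullin tree supplied by the theorem just quoted. First I would record what that theorem gives us: a simplicial tree $\Delta_\U$, a homomorphism $\Aut(\U)\to\Iso(\Delta_\U)$, and the identification of the stabilizer of a vertex $\iota\colon\U\to Y$ with the algebraic group $G(\iota)\subset\Aut(Y)$ consisting of the automorphisms of $Y$ that fix the boundary $\partial Y$. Restricting $\Aut(\U)\to\Iso(\Delta_\U)$ to $\Gamma$ yields an action of $\Gamma$ on the tree $\Delta_\U$ by isometries, and the whole point is to produce a $\Gamma$-fixed vertex.

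Next I would invoke the fixed-point property. Since $\Gamma$ has the fixed point property on trees, this property applies in particular to the barycentric subdivision $\Delta_\U'$, which is again a tree and on which $\Gamma$ acts \emph{without inversion} (barycentric subdivision always removes inversions). Hence $\Gamma$ fixes a vertex $v$ of $\Delta_\U'$. Such a vertex is either a vertex of $\Delta_\U$, that is a standard completion $\iota\colon\U\to Y$, or else the midpoint of an edge of $\Delta_\U$, in which case $\Gamma$ stabilizes a pair $\{\iota_1,\iota_2\}$ of standard completions joined by an elementary link. In the first case $\Gamma$ fixes $\iota$ and we are done; in the second I would rule out an exchange of $\iota_1$ and $\iota_2$ by noting that the $\Aut(\U)$-action preserves the combinatorial type of a standard completion (in particular the self-intersection sequence of its boundary chain), while the two ends of an elementary link have distinct such invariants — as in the displayed example, one end is $m$-standard and the other $(m\pm1)$-standard. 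No isometry of $\Delta_\U$ coming from $\Aut(\U)$ can therefore interchange $\iota_1$ and $\iota_2$, so $\Gamma$ fixes each of them, and again we obtain a $\Gamma$-fixed vertex $\iota$ of $\Delta_\U$.

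Once a fixed vertex $\iota\colon\U\to Y$ is in hand, the conclusion is immediate: $\Gamma$ lies in the stabilizer of $\iota$, which by the Danilov--Gizatullin theorem is exactly $G(\iota)\subset\Aut(Y)$, proving the corollary. The main obstacle is the passage from a merely fixed point of the geometric realization to a fixed \emph{vertex}, i.e.\ the exclusion of inversions; this is where one genuinely uses the structure of the links, since the bare fixed-point property only locates the center of a bounded orbit, which a priori could be an edge midpoint. All the remaining steps are formal consequences of the quoted theorem.
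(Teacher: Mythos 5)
Your proof is correct and follows the same route as the paper — indeed the only available one: the paper states this corollary without any proof, treating it as immediate from the Danilov--Gizatullin theorem (restrict the action $\Aut(\U)\to\Iso(\Delta_\U)$ to $\Gamma$, apply the fixed-point property to get a fixed vertex $\iota$, and conclude $\Gamma\subset G(\iota)$ since that is exactly the vertex stabilizer). The one place where you go beyond the paper is the treatment of inversions: since the paper's Property (FA) only guarantees bounded orbits, the circumcenter argument a priori yields a fixed point of the geometric realization that could be an edge midpoint, and you rule out a swap of the two endpoints by noting that the $\Aut(\U)$-action preserves the self-intersection sequence of the boundary chain while an elementary link changes it (e.g.\ $m$-standard versus $(m+1)$-standard). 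That is the right argument, but be aware that your justification extrapolates from the single example of an elementary link displayed in the paper; the fact that \emph{every} elementary link alters the combinatorial type of the boundary is part of Danilov--Gizatullin's theory, cited but not reproduced in the paper, so this step ultimately rests on the reference rather than on anything proved in the text.
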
 

If $\Gamma$ has Property {\FW}, it has Property {(FA)} (see Section~\ref{par:Bass}). Thus, if it acts by automorphisms on $\U$, 
$\Gamma$ is conjugate to the subgroup $G(\iota)$ of $\Aut(Y)$, for some zigzag-completion $\iota\colon \U\to Y$. 
Theorem~\ref{thm:classification-virtually-autom} of Section~\ref{par:auto-rational-surfaces} 
implies that the action of $\Gamma$ on the initial surface $X$ is conjugate to a regular action on 
$\P^2_\bfk$, $\P^1_\bfk\times \P^1_\bfk$ or $\Hirz_m$, for some Hirzebruch surface~$\Hirz_m$.
This action preserves a curve, namely the image of the zigzag into the surface $Y$. The following examples
list all possibilities, and conclude the proof of Theorem~B in the case of zigzags (i.e. case (3)
in Proposition~\ref{pro:Gizatullin-boundary}).

\begin{eg}
Consider the projective plane $\P^2_\bfk$, together with an infinite subgroup $\Gamma\subset \Aut(\P^2_\bfk)$ 
that preserves a curve $C$ and has Property {\FW}. Then, $C$ must be a smooth rational curve: either
a line, or a smooth conic. Indeed, if the genus of $C$ is positive, or if $C$ is rational but is not smooth, 
then the action of $\Gamma$ on $C$ factors through a finite quotient of $\Gamma$ (see Lemma~\ref{lem:pgl2-fw}); 
but then the image of $\Gamma$ in $\Aut(\P^2_\bfk)$ would be virtually solvable, hence finite by Property {\FW}.  Now, if $C$ is the line ``at infinity'', then $\Gamma$ acts by affine transformations 
on the affine plane $\P^2_\bfk\smallsetminus C$. If $C$ is the conic $x^2+y^2+z^2=0$, $\Gamma$ becomes 
a subgroup of ${\sf{PO}}_3(\bfk)$.
\end{eg}

\begin{eg}
When $\Gamma$ is a subgroup of $\Aut(\P^1_\bfk\times \P^1_\bfk)$ that preserves a curve $C$ and has Property {\FW}, 
then $C$ must be a smooth curve because $\Gamma$ has no finite orbit (Lemma~\ref{lem:pgl2-fw}). Similarly,
the two projections $C\to \P^1_\bfk$ being equivariant with respect to the morphisms $\Gamma\to \PGL_2(\bfk)$, they 
have no ramification points. Thus, $C$ is a smooth rational curve, and its projections onto each factor are isomorphisms. 
In particular, the action of $\Gamma$ on $C$ and on each factor are conjugate. These conjugacies show
that $\Gamma$  is conjugate to a diagonal embedding
\[
\gamma\in \Gamma\;  \mapsto \; (\rho(\gamma),\rho(\gamma)) \in \PGL_2(\bfk)\times \PGL_2(\bfk).
\]
\end{eg}

\begin{eg}
Similarly, the group $\SL_2(\bfk)$ acts on the Hirzebruch surface $\Hirz_m$, preserving the zero section of the 
fibration $\pi\colon \Hirz_m \to \P^1_\bfk$. This gives examples of groups with Property {\FW} acting on $\Hirz_m$
and preserving a big and nef curve $C$. 
\end{eg}

Starting with one of the above examples, one can  blow-up points on the invariant curve $C$, and then 
contract $C$, to get examples of zigzag completions $Y$ on which $\Gamma$ acts and contracts the boundary $\partial Y$. 

\section{Birational transformations of surfaces II}\label{par:Cycles-Thompson}

In this section, $\U$ is a (normal, singular) affine surface with a completion $X$ by a cycle of~$\ell$ rational
curves. Every irreducible component $E_i$ of the boundary $\partial X=X\smallsetminus \U$ is contracted
by at least one automorphism of $\U$. Our goal is to classify subgroups $\Gamma$ of $\Aut(\U)\subset \Bir(X)$
that are infinite and have Property {\FW}: in fact, we shall show that no such group exists. This ends the proof of 
Theorem~B since the other possibilities of Proposition~\ref{pro:Gizatullin-boundary} have been dealt with in the previous section. 

\begin{rem}
The proof is based on the fact that $\Aut(\U)$ acts in a piecewise $\PGL(2,\Z)$ way on a circle whose rational 
points correspond to divisors at infinity in various compactifications of $\U$. To describe this action, our presentation is
similar to the one in \cite{Hubbard-Papadopol}. Another equivalent, more precise,  but slightly longer route is to consider the set of valuations on
the ring of regular functions on $\U$ which are centered on $\partial X$. The circle we are looking for corresponds
to a certain set of valuations with log-discrepancy $0$; this approach is described in a particular case in 
\cite{NBD};  to study the log-discrepancy in our context, one could refer to \cite{DJS} (in order to construct a regular $2$-form
on $\U$ with poles exactly along $\partial X$ after compactification). 
Also, we use both Farey and dyadic partitions of the circle because the Farey viewpoint is used by algebraic geometers, 
while  dyadic partitions are often used in group theory  (see \cite{Navas:Book}, \S 1.5); these are
just two equivalent viewpoints.
\end{rem}

\begin{eg}
Let $(\A_\bfk^1)^*$ denote the complement of the origin in the affine line $\A^1_\bfk$; it is isomorphic to 
the multiplicative group ${\mathbb{G}}_m$ over $\bfk$. The surface $(\A_\bfk^1)^*\times (\A_\bfk^1)^*$ 
is an open subset in $\P^2_\bfk$ whose boundary is the triangle of coordinate lines $\{[x:y:z];\; xyz=0\}$.
Thus, the boundary is a cycle of length $\ell=3$.
The group of automorphisms of  $(\A_\bfk^1)^*\times (\A_\bfk^1)^*$ is the semi-direct product 
$
\GL_2(\Z)\ltimes ({\mathbb{G}}_m(\bfk)\times {\mathbb{G}}_m(\bfk));
$
it does not contain any infinite group with Property {\FW}.
\end{eg}

\subsection{Resolution of indeterminacies}

Let us order cyclically the irreducible components $E_i$ of $\partial X$, so that $E_i\cap E_j\neq \emptyset$ 
if and only if $i-j=\pm 1 (\mathrm{mod}\,\ell)$. Blowing up finitely many singularities of $\partial X$, 
we may assume that $\ell=2^m$ for some integer $m\geq 1$; in particular, every curve $E_i$ is smooth.
(With such a modification, one may a priori create irreducible components of $\partial X$ that are not contracted
 by the group~$\Gamma$.)

\begin{lem}\label{lem:Indet-Cyclic}
Let $f$ be an automorphism of $\U$ and let $f_X$ be the birational extension of $f$ to the surface $X$. Then 
\begin{enumerate}
\item Every indeterminacy point of $f_X$ is a singular point of $\partial X$, i.e.\ one of the intersection points
$E_i\cap E_{i+1}$.

\item Indeterminacies of $f_X$ are resolved by inserting chains of rational curves.
\end{enumerate}
\end{lem}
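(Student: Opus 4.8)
The plan is to prove the two assertions together, exploiting that $f$ is a genuine automorphism of $\U$, so that neither $f_X$ nor $f_X^{-1}$ can contract or create curves away from $\partial X$. First I would record the basic constraints. Since $f$ is regular on $\U$, the birational extension $f_X$ is a local isomorphism there, whence $\Ind(f_X)\subseteq\partial X$; and any irreducible curve contracted by $f_X$ or by $f_X^{-1}$ must be disjoint from $\U$ (otherwise $f$ or $f^{-1}$ would contract a curve inside $\U$), hence is a union of boundary components $E_i$. I would also use the standard surface dictionary: for a birational self-map of a smooth projective surface, a point $p$ lies in $\Ind(f_X)$ if and only if $f_X^{-1}$ contracts some curve onto $p$. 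Combining these facts, every indeterminacy point $p$ of $f_X$ is the image under $f_X^{-1}$ of a connected curve $\Lambda\subseteq\partial X$ contracted by $f_X^{-1}$, that is, of a connected union of consecutive components $E_j,\dots,E_{j+s}$ of the cycle.

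For assertion (1) I would then show that $\Lambda$ is a proper subcurve of $\partial X$ and read off from this that $p$ is a node. Properness holds because $\partial X$ supports a big and nef divisor (the last sentence of Proposition~\ref{pro:Gizatullin-boundary}, since $\U$ is the blow-up of an affine surface), and a birational map cannot contract a big divisor to a point; thus at least one boundary component survives. The components $E_{j-1}$ and $E_{j+s+1}$ flanking the contracted chain are not contracted, so by the cycle-preservation property of Theorem~\ref{thm:FW-regularization-surfaces}(\ref{mc4}) their strict transforms $(f_X^{-1})_\circ(E_{j-1})$ and $(f_X^{-1})_\circ(E_{j+s+1})$ are again boundary components. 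Since the nodes $E_{j-1}\cap E_j$ and $E_{j+s}\cap E_{j+s+1}$ are sent to $p$ by $f_X^{-1}$, both of these strict transforms pass through $p$. This forces at least two local branches of $\partial X$ through $p$; as the boundary components are smooth, two branches at $p$ means two distinct components meet there, so $p$ is a singular point of $\partial X$, namely one of the intersection points $E_i\cap E_{i+1}$.

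For assertion (2) I would run the resolution while keeping the cycle intact. Blowing up a node $E_i\cap E_{i+1}$ of a normal-crossing cycle of smooth rational curves yields again such a cycle, one unit longer: the exceptional $\P^1$ is inserted between the strict transforms of $E_i$ and $E_{i+1}$, producing two new nodes and no interior intersection. This modification is an isomorphism over $\U$, so $f$ remains an automorphism of the unchanged open part and all hypotheses of assertion (1) persist on the new surface; hence its indeterminacy points are again nodes. Iterating, every blow-up required to resolve $f_X$ is centred at a node of the current boundary and inserts a single rational curve into the cycle, so the indeterminacies are resolved by inserting chains of rational curves between consecutive components. The main obstacle I anticipate is the branch-counting in assertion (1): one must be certain that a genuine indeterminacy forces two honest branches of $\partial X$ through $p$, which relies both on the non-contractibility of the big boundary divisor (to guarantee surviving components) and on the smoothness of the $E_i$ to rule out the degenerate possibility that a single surviving component accounts for $p$ via a self-crossing; the corner cases where $\Lambda$ is nearly all of $\partial X$ should be absorbed by first blowing up enough nodes.
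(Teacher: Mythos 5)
Your opening reductions are correct ($\Ind(f_X)\subseteq\partial X$; any curve contracted by $f_X^{\pm 1}$ is a union of boundary components; each $p\in\Ind(f_X)$ is the common image of a connected chain $\Lambda=E_j\cup\dots\cup E_{j+s}$ contracted by $f_X^{-1}$), but the key step in your proof of (1) fails. From ``$\partial X$ supports a big and nef divisor, hence cannot be contracted to a single point'' you infer that the flanking components $E_{j-1}$ and $E_{j+s+1}$ are \emph{not contracted}. What actually follows is only that they are not contracted \emph{to $p$}: they may be contracted to other points, in which case they have no strict transforms as curves and your branch count collapses. This happens in the most basic example this lemma must handle: $\U=(\A^1_\bfk\smallsetminus\{0\})^2\subset X=\P^2_\bfk$, with $\partial X$ the coordinate triangle (whose support carries an ample divisor), and $f$ the restriction of the quadratic involution $\sigma[x:y:z]=[yz:xz:xy]$. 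Every boundary line is contracted by $\sigma=\sigma^{-1}$, each to a different vertex; for $p=[1:0:0]$ one has $\Lambda=\{x=0\}$, and both flanking lines are contracted elsewhere. (The conclusion of the lemma still holds --- $p$ is the node $\{y=0\}\cap\{z=0\}$ --- but not for the reason your argument supplies.) There is a second, independent gap: even when $E_{j-1}$ genuinely survives, your claim that $(f_X^{-1})_\circ(E_{j-1})$ passes through $p$ silently assumes that $f_X^{-1}$ is defined at the node $q=E_{j-1}\cap E_j$. If $q\in\Ind(f_X^{-1})$, the strict transform of $E_{j-1}$ is only guaranteed to meet the image of the exceptional locus over $q$ --- a connected curve containing $p$ --- not to contain $p$ itself; and one cannot assume the nodes avoid $\Ind(f_X^{-1})$, since statement (1) applied to $f^{-1}$ is precisely the assertion that $\Ind(f_X^{-1})$ consists of such nodes.

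Because of this, the purely local strategy (produce two branches of $\partial X$ through $p$ from the two ends of $\Lambda$) does not go through; some global input is required. The paper supplies it by taking a \emph{minimal} resolution $\epsilon,\pi\colon Y\to X$ of $f_X$, with all centers on $\partial X$, and noting that $F=\epsilon^{-1}(\partial X)=\pi^{-1}(\partial X)$ because $f$ is an automorphism of $\U$; thus the dual graph of $F$ is a cycle with trees (``branches'') attached, from \emph{both} points of view. The unique cycle of this graph is the same for $\epsilon$ and for $\pi$, so every branch is contracted by both morphisms; blowing the branches down then yields a strictly smaller resolution, contradicting minimality unless $F$ is a cycle --- and $F$ being a cycle is exactly the conjunction of (1) and (2). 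Your induction for (2) (blow up nodes, the hypotheses persist, iterate until the map is resolved) is sound in outline, but it rests entirely on (1), so it inherits the gap. A minor further point: property (\ref{mc4}) of Theorem~\ref{thm:FW-regularization-surfaces} is stated for the group $\Gamma_Z$ produced by the regularization theorem, not for an arbitrary automorphism of $\U$; the boundary invariance you need is the elementary fact you already established in your first paragraph, so it should be cited from there rather than from that theorem.
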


Property (2) means that there exists a resolution of the indeterminacies of $f_X$, given by two birational 
morphisms $\epsilon\colon Y \to X$ and $\pi\colon Y\to X$ with $f\circ \epsilon = \pi$, such that 
$\pi^{-1}(\partial X)=\epsilon^{-1}(\partial X)$ is a cycle of rational curves. Some of the singularities of $\partial X$
have been blown-up into chains of rational curves to construct $Y$.

\begin{figure}[h]
\input{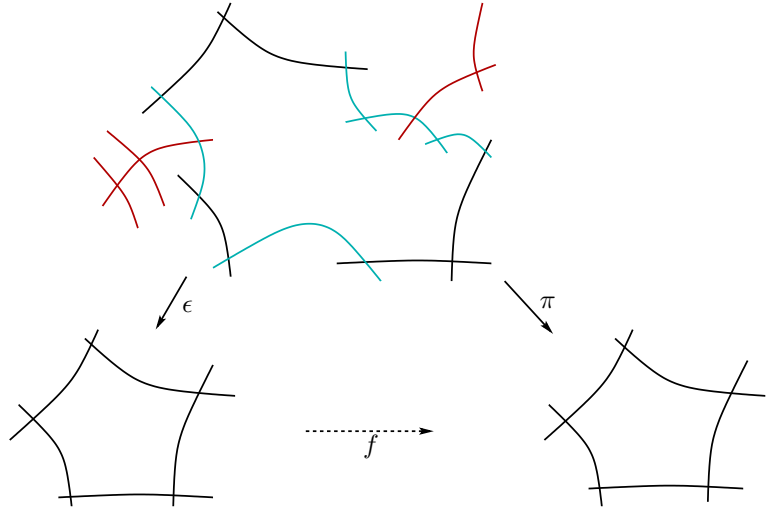}
\caption{{\small{A blow-up sequence creating two (red) branches. No branch of this type appears for minimal resolution.}}}
\end{figure}

\begin{proof}
Consider a minimal resolution of the indeterminacies of $f_X$. It is given by a finite sequence of blow-ups
of the base points of $f_X$, producing a surface $Y$ and two birational morphisms $\epsilon\colon Y \to X$ 
and $\pi\colon Y\to X$ such that $f_X=\pi\circ \epsilon^{-1}$. Since the indeterminacy points of $f_X$ are contained
in $\partial X$, all necessary blow-ups are centered on $\partial X$.

The total transform $F=\epsilon^*(\partial X)$ is a union of rational curves: it is made of a cycle, together with branches
emanating from it. One of the assertions (1) and (2) fails if and only if $F$ is not a cycle; in that case,
there is at least one branch.

Each branch is a tree of smooth rational curves, which may be blown-down onto a smooth point; indeed, these branches 
come from  smooth points of the main cycle that have been blown-up finitely many times. Thus, there is a birational 
morphism $\eta\colon Y\to Y_0$ onto a smooth surface $Y_0$ that contracts the branches (and nothing more). 

The morphism $\pi$ maps $F$ onto the cycle $\partial X$, so that  all branches of $F$ are contracted by $\pi$. Thus, 
both $\epsilon$ and $\pi$ induce (regular) birational morphisms $\epsilon_0\colon Y_0\to X$ and $\pi_0\colon Y_0\to X$. 
This contradicts the minimality of the resolution. 
\end{proof}

Let us introduce a family of surfaces 
\[
\pi_k\colon X_k\to X.
\]
First, $X_1=X$ and $\pi_1$ is the identity map. Then, $X_2$ is obtained by blowing-up the~$\ell$ singularities
of $\partial X_1$; $X_2$ is a compactification of $\U$ by a cycle $\partial X_2$ of $2\ell=2^{m+1}$ smooth rational curves. Then, 
$X_3$ is obtained by blowing up the singularities of $\partial X_2$, and so on. In particular, $\partial X_k$ is a cycle of $2^{k-1}\ell = 2^{m+k-1}$
curves. 

Denote by $\Dual_k$ the {\bf{dual graph}} of $\partial X_k$: vertices of $\Dual_k$ correspond to irreducible 
components $E_i$ of $\partial X_k$ and edges to intersection points $E_i\cap E_j$. A simple blow-up 
(of a singular point) modifies both $\partial X_k$ and $\Dual_k$ locally as follows

\begin{figure}[h]
\input{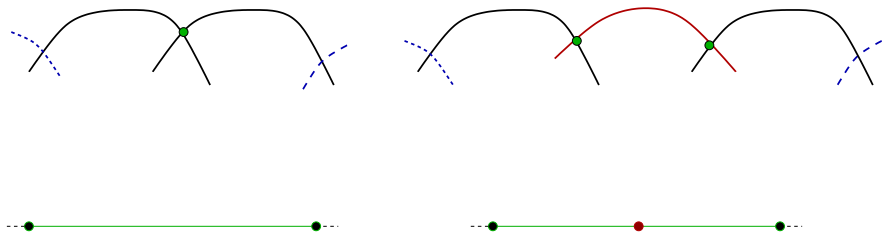}
\caption{{\small{Blowing-up one point.}}}\label{fig:2}
\end{figure}

The group $\Aut(\U)$ acts on $\Hyp(X)$ and Lemma~\ref{lem:Indet-Cyclic} shows that its action stabilizes the subset 
$\B$ of $\Hyp(X)$ defined as
\[
\B=\left\{C\in \Hyp(X): \; \exists k\geq 1, C\; \,  \text{is an irreducible component of } \; \partial X_k \right\}.
\]
In what follows, we shall parametrize $\B$ in two distinct ways by rational numbers.  

\subsection{Farey and dyadic parametrizations}\label{fareydy}

Consider an edge of the graph $\Dual_1$, and identify this edge with the unit interval $[0,1]$. 
Its endpoints correspond to two adjacent components $E_i$ and $E_{i+1}$ of $\partial X_1$, 
and the edge corresponds to their intersection~$q$. Blowing-up $q$ creates a new vertex (see Figure~\ref{fig:2}).  The edge
is replaced by two adjacent edges of $\Dual_2$ with a common vertex corresponding to the
exceptional divisor and the other vertices corresponding to (the strict transforms of) $E_i$ and
$E_{i+1}$; we may identify this part of $\Dual_2$ with the segment $[0,1]$, the three vertices 
with $\{0, 1/2, 1\}$, and the two edges with $[0,1/2]$ and $[1/2,1]$. 

Subsequent blow-ups may be organized in two different ways by using either a dyadic or a Farey algorithm (see Figure~\ref{fig:3}). 

In the dyadic algorithm, the vertices are labelled by dyadic numbers $m/2^k$. The vertices of $\Dual_{k+1}$ coming 
from an initial edge $[0,1]$ of $\Dual_1$ are the $2^k+1$ points
$\{n/2^{k}; \; 0\leq n \leq 2^k\}$ of the segment $[0,1]$. We denote by $\Dya(k)$ the set of dyadic numbers $n/2^k\in [0,1]$; thus, 
$\Dya(k)\subset \Dya(k+1)$.
We say that an interval $[a,b]$ is a {\bf{standard dyadic}} interval if $a$ and $b$ are two consecutive numbers in 
$\Dya(k)$ for some~$k$.

In the Farey algorithm, the vertices correspond to rational numbers $p/q$. 
Adjacent vertices of $\Dual_k$ coming from the initial segment $[0,1]$
correspond to pairs of rational numbers $(p/q,r/s)$ with $ps-qr=\pm 1$; two adjacent vertices of $\Dual_k$
 give birth to a new, middle vertex in $\Dual_{k+1}$: this middle vertex is $(p+r)/(q+s)$
(in the dyadic algorithm, the middle vertex is the ``usual'' euclidean middle). We shall say that an interval $[a,b]$ is a
{\bf{standard Farey}} interval if $a=p/q$ and $b=r/s$ with $ps-qr=-1$. We denote by $\Far(k)$ the finite set of rational 
numbers $p/q\in [0,1]$ that is given by the $k$-th step of Farey algorithm; thus, $\Far(0)=\{0,1\}$ and $\Far(k)$
is a set of $2^{k}+1$ rational numbers $p/q$ with $0\leq p\leq q$. (One can check that $1\leq q\leq \Fib(k+2)$, with 
$\Fib(k)$ the $k$-th term in the Fibonacci sequence $\Fib(0)=0$, $\Fib(1)=1$, $\Fib(m+1)=\Fib(m)+\Fib(m-1)$.)

\begin{figure}[h]
\input{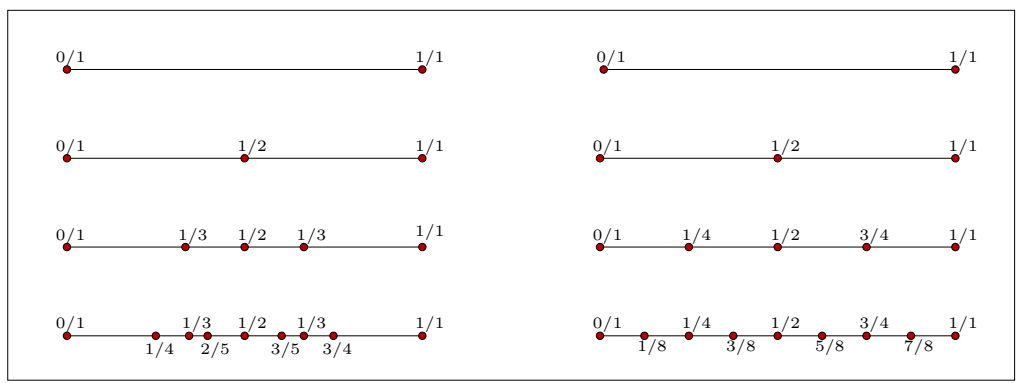}
\caption{{\small{On the left, the Farey algorithm. On the right, the dyadic one. Here $k=0$ (top), to $k=3$ (bottom).}}}\label{fig:3}
\end{figure}

By construction, the graph $\Dual_1$ has $\ell=2^m$ edges. The edges of $\Dual_1$ are in one-to-one correspondence with the singularities $q_j$ of $\partial X_1$. Each edge determines a subset $\B_j$ of $\B$; the elements
of $\B_j$ are the 
curves $C\subset \partial X_k$ ($k\geq 1$) such that $\pi_k(C)$ contains the singularity $q_j$ determined by the edge.
Using the dyadic algorithm (resp. Farey algorithm), the elements of $\B_j$  are in one-to-one correspondence with 
dyadic (resp. rational) numbers in $[0,1]$. Gluing these segments cyclically one gets a circle $\SS^1$, together with a nested 
sequence of subdivisions in $\ell$, $2\ell$, $\ldots$, $2^{k-1}\ell$, $\ldots$ intervals; each interval is 
a standard dyadic (or Farey) interval of one of the initial edges.

Since there are $\ell=2^m$ initial edges, we may identify the graph $\Dual_1$ 
with the circle $\SS^1=\R/\Z=[0,1]/_{0\simeq 1}$ and the initial vertices
with the dyadic numbers in $\Dya(m)$ modulo $1$ (resp. the elements of $\Far(m)$ modulo $1$). 
The vertices of $\Dual_k$ are in one to one correspondence with the dyadic numbers in $\Dya(k+m-1)$.

\begin{rem}\label{rem:Farey-Thompson}
\vspace{0.1cm}
\noindent{(a).--} By construction, the interval $[p/q,r/s]\subset [0,1]$ is a standard Farey interval if and only if $ps-qr=-1$, iff it is delimited  by two adjacent
elements of $\Far(m)$ for some $m$.

\vspace{0.1cm}
\noindent{(b).--} If $h\colon [x,y]\to [x',y']$ is a homeomorphism between two standard Farey intervals mapping
rational numbers to rational numbers and standard Farey intervals to standard Farey intervals, then $h$ is the restriction 
to $[x,y]$ of a unique linear projective transformation with integer coefficients: 
\[
h(t)=\frac{at+b}{ct+d}, \; \text{ for some element }\;  \left( \begin{array}{cc} a & b \\ c & d \end{array}\right) \; \text{ of }\; \PGL_2(\Z).
\]

\vspace{0.1cm}
\noindent{(c).--} Similarly, if $h$ is a homeomorphism mapping 
standard dyadic intervals to intervals of the same type, then $h$ is the
restriction of an affine dyadic map
\[
h(t)=2^mt+\frac{u}{2^n}, \; {\text{ with }} m, n \in \Z.
\]
\end{rem}

In what follows, we denote by $\Tho$ the group of self-homeomorphisms of $\SS^1=\R/\Z$ that are piecewise
$\PGL_2(\Z)$ mapping with respect to a finite decomposition of the circle in standard Farey  intervals $[p/q,r/s]$. 
In other words, if $f$ is an element of $\Tho$, there are two partitions of the circle into consecutive intervals 
$I_i$ and $J_i$ such that the $I_i$ are intervals with rational endpoints, $h$ maps $I_i$ to $J_i$, and the
restriction $f\colon I_i\to J_i$ is the restriction of an element of $\PGL_2(\Z)$ (see \cite{Navas:Book}, \S 1.5.1). 

\begin{thm}
Let $\U$ be an affine surface with a compactification $\U \subset X$ such that $\partial X:=X\smallsetminus \U$
is a cycle of smooth rational curves. In the Farey parametrization of the set $\B\subset \Hyp(X)$ of boundary curves, 
the group $\Aut(\U)$ acts on $\B$ as a subgroup of $\Tho$.
\end{thm}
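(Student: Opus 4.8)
The plan is to prove that \emph{each} $f\in\Aut(\U)$ induces an element $f_\bullet\in\Tho$; since $g\mapsto g_\bullet$ is already a homomorphism into $\Bij(\Hyp(X))$ (Theorem~\ref{thm:def-action-hypX}) and preserves $\B$ (Lemma~\ref{lem:Indet-Cyclic}), this is enough. Fix $f$, let $f_X\colon X\dasharrow X$ be its birational extension, and take the minimal resolution $X\xleftarrow{\;\epsilon\;}Y\xrightarrow{\;\pi\;}X$ with $f_X=\pi\circ\epsilon^{-1}$. By Lemma~\ref{lem:Indet-Cyclic}, applied to both $f$ and $f^{-1}$, indeterminacies are resolved at singular points of the cycle by inserting chains, with no extra branches; hence $\partial Y=\epsilon^{-1}(\partial X)=\pi^{-1}(\partial X)$ is again a cycle of smooth rational curves, and both $\epsilon$ and $\pi$ are compositions of blow-ups of boundary singular points. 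Writing $\partial Y=D_1\cup\cdots\cup D_N$ in cyclic order, I would attach to each $D_j$ two Farey coordinates: $x_j:=\phi(\epsilon_\bullet(D_j))$ and $y_j:=\phi(\pi_\bullet(D_j))$, where $\phi\colon\B\to\R/\Z$ is the Farey parametrization of Section~\ref{fareydy} (note $\epsilon_\bullet(D_j),\pi_\bullet(D_j)\in\B$, whether $D_j$ is a strict transform or an exceptional component).

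First I would record the structural fact that, because a blow-up of the intersection of two adjacent boundary components produces the curve labeled by the \emph{mediant} of the two adjacent Farey labels, a cycle completion of $\U$ obtained from $X$ by blow-ups of boundary singular points has boundary labeled by a \emph{complete Farey subdivision}: the cyclically ordered family of its labels consists of consecutive Farey neighbours $p/q,r/s$ with $ps-qr=\pm1$, tiling $\R/\Z$ by the $N$ standard Farey intervals $[x_j,x_{j+1}]$ (Remark~\ref{rem:Farey-Thompson}(a)). Applying this to $\epsilon$ and to $\pi$ shows that $(x_j)_j$ and $(y_j)_j$ are both complete Farey subdivisions; since $\epsilon$ and $\pi$ preserve the cyclic order of the components of $\partial Y$, the bijection $x_j\mapsto y_j$ is order preserving. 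As $f_\bullet=\pi_\bullet\circ\epsilon_\bullet^{-1}$, in Farey coordinates $f_\bullet$ carries the subdivision $(x_j)$ onto the subdivision $(y_j)$.

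The heart of the argument is to identify $f_\bullet$ on a single interval $[x_j,x_{j+1}]$. The curves of $\B$ whose $\epsilon$-coordinate lies in $[x_j,x_{j+1}]$ are exactly those $C$ for which $\epsilon_\bullet^{-1}(C)$, in the blow-up tower above $Y$, is $D_j$, $D_{j+1}$, or a descendant of the singular point $s_j=D_j\cap D_{j+1}$; through $\epsilon$ these descendants inherit the Stern--Brocot labeling of $[x_j,x_{j+1}]$, while through $\pi$ the \emph{same} curves inherit the Stern--Brocot labeling of $[y_j,y_{j+1}]$. Thus $f_\bullet$ restricts to an order-preserving bijection $\Q\cap[x_j,x_{j+1}]\to\Q\cap[y_j,y_{j+1}]$ that sends standard Farey subintervals (which correspond to singular points of the common tower over $s_j$) to standard Farey subintervals. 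By Remark~\ref{rem:Farey-Thompson}(b) this restriction is the trace of a unique element of $\PGL_2(\Z)$. Since the $N$ resulting Möbius maps agree at the shared endpoints $x_j\mapsto y_j$, they glue to a homeomorphism of $\SS^1$ that is piecewise-$\PGL_2(\Z)$ for the finite decomposition $\{[x_j,x_{j+1}]\}$ into standard Farey intervals; this is exactly an element of $\Tho$ inducing $f_\bullet$ on $\B$, which completes the proof.

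The step I expect to be the main obstacle is the last identification: verifying rigorously that the descendant tree of $s_j$ carries \emph{the same} combinatorial (mediant) structure whether its labels are read off through $\epsilon$ or through $\pi$, so that $f_\bullet$ genuinely maps standard Farey subintervals to standard Farey subintervals and Remark~\ref{rem:Farey-Thompson}(b) applies. This rests on two points that I would isolate as lemmas: that blowing up a boundary node is always recorded by the mediant of the adjacent labels (making both labelings honest Stern--Brocot labelings), and the no-branch conclusion of Lemma~\ref{lem:Indet-Cyclic}, which guarantees that $Y$ is itself a cycle completion so that $(x_j)$ and $(y_j)$ are complete subdivisions rather than partial subtrees.
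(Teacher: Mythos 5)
Your proof is correct and follows the paper's strategy in its essentials: both arguments rest on Lemma~\ref{lem:Indet-Cyclic}, cut the circle into the standard Farey intervals attached to the nodes of a model adapted to $f$, show that $f_\bullet$ carries the Stern--Brocot structure of each such interval onto that of its image interval, and conclude with Remark~\ref{rem:Farey-Thompson}~(b) and gluing. The organizational difference lies in how the key step is discharged. The paper never leaves the standard tower $(X_k)$: it picks $m$ so that $f$ is regular along the relevant curves of $X_m$, notes that $f$ is then a local isomorphism near a boundary node $q$ onto a neighborhood of a node $q'$ of some $X_n$, and lifts $f$ through simultaneous blow-ups of $q$ and $q'$; since the Farey labels are \emph{defined} by this tower, the mediant rule on both sides is definitional, and no consistency statement about other models is needed. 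You instead double-label the minimal resolution $Y$ (and the towers over its nodes) through $\epsilon$ and $\pi$, which is a clean, symmetric reformulation, but it transfers the burden to exactly the ``structural fact'' you flag: that in an \emph{arbitrary} composition of blow-ups of boundary nodes---not just the standard tower---the exceptional curve over a node is the element of $\B$ labelled by the mediant of the two adjacent labels. This lemma is true: the standard tower is cofinal among node-blow-up models (every center of such a model is an iterated infinitely-near node over $X$, hence is blown up at some finite stage $X_k$), and domination by a tower model forces the two labelings to agree; but it is a genuine piece of bookkeeping that the paper's formulation avoids, whereas the paper only needs the mediant rule inside the tower, where it holds by construction of the parametrization. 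Modulo this lemma, which you correctly single out as the main obstacle, your argument is complete; one cosmetic point is that $x_j\mapsto y_j$ may reverse the cyclic order (when $f$ reverses the orientation of the cycle), which is harmless since $\Tho$ contains orientation-reversing elements.
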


\begin{rem}
There is a unique orientation preserving self-homeomorphism of the circle that maps $\Dya(k)$ to $\Far(k)$ for every $k$.
This self-homeomorphism conjugates $\Tho$ to the group $\Thom$ of self-homeomorphisms of the circle that are piecewise
affine with respect to a dyadic decomposition of the circle, with slopes in $\pm 2^\Z$, and with translation parts in $\Z[1/2]$. 
Using the parametrization of $\B$ by dyadic numbers, the image of $\Aut(\U)$ becomes a subgroup of $\Thom$.
\end{rem}

\begin{proof} Lemma~\ref{lem:Indet-Cyclic}    is the main ingredient. 
Consider the action of the group $\Aut(\U)$ on the set $\B$. Let $f$ be an element of $\Aut(\U)\subset \Bir(X)$. Consider an irreducible 
curve $E\in \B$, and denote by $F$ its image: $F=f_\bullet(E)$ is an element of $\B$ by Lemma~\ref{lem:Indet-Cyclic}. 
There are integers $k$ and $l$ such that $E\subset \partial X_k$ and
$F\subset \partial X_l$. Replacing $X_k$ by a higher blow-up $X_m\to X$, we may assume that $f_{lm} :=\pi_l^{-1}\circ f\circ \pi_m$ is regular on a neighborhood of the curve $E$ (Lemma~\ref{lem:Indet-Cyclic}). Let $q_k$ be one of the two singularities of $\partial X_m$ that are contained in $E$, 
and let $E'$ be the second irreducible component of $\partial X_m$ containing $q$. If $E'$ is blown down by $f_{lm}$, its image is
one of the two singularities of $\partial X_l$ contained in $F$ (by Lemma~\ref{lem:Indet-Cyclic}). 
Consider the smallest integer $n\geq l$ such that $\partial X_n$ contains 
the strict transform $F'=f_\bullet(E')$; in $X_n$, the curve $F'$ is adjacent to the strict transform of $F$ (still denoted $F$), and $f$ is a local
isomorphism from a neighborhood of $q$ in $X_m$ to a neighborhood of $q':=F\cap F'$ in $X_n$. 

Now, if one blows-up $q$, the exceptional divisor $D$ is mapped by $f_\bullet$ to the exceptional divisor $D'$ obtained by a simple blow-up of $q$: 
$f$ lifts to a local isomorphism from a neighborhood of $D$ to a neighborhood of $D'$, the action from $D$ to $D'$ being given by the
differential $df_q$. The curve $D$ contains two singularities of $\partial X_{m+1}$, which can be blown-up too: again, $f$ lifts to a local isomorphism
if one blow-ups the singularities of $\partial X_{n+1}\cap D'$. We can repeat this process indefinitely. Let us now phrase this remark differently. 
The point $q$ determines an edge of $\Dual_m$, hence a standard Farey interval $I(q)$. The point $q'$ determines an edge of $\Dual_n$, 
hence another standard Farey interval $I(q')$. Then, the points of $\B$ that are parametrized by rational numbers in $I(q)$ are mapped
by $f_\bullet$ to rational numbers in $I(q')$ and this map respects the Farey order: if we identify $I(q)$ and $I(q')$ to $[0,1]$, 
$f_\bullet$ is the restriction of a monotone map that sends $\Far(k)$ to $\Far(k)$ for every $k$. 
Thus, on $I(q)$, $f_\bullet$ is the restriction of a linear projective transformation with integer coefficients 
(see Remark~\ref{rem:Farey-Thompson}-(b)). This shows that $f_\bullet$ is an element of $\Tho$.
\end{proof}

\subsection{Conclusion}

Consider the group $\Thom^*$ of self-homeomorphisms of the circle $\SS^1=\R/\Z$ 
that are piecewise affine with respect to a finite partition of $\R/\Z$ into dyadic 
intervals $[x_i,x_{i+1}[$ with $x_i$ in $\Z[1/2]/\Z$ for every $i$, and satisfy 
$
h(t)=2^{m_i}t+a_i
$
with $m_i\in \Z$ and $a_i\in \Z[1/2]$ for every $i$. This group is known as the Thompson group 
of the circle, and is isomorphic to the group $\Tho^*$ of orientation-preserving self-homeomorphisms
in $\Tho$ (defined in \S\ref{fareydy}).

\begin{thm}[Farley, Hughes \cite{Farley:2003, hughesfarley}]\label{thm:Farley-Navas}
Every subgroup of the Thompson group $\Thom^*$ (and hence of $\Tho^*$) with Property {\FW} is a finite cyclic group. 
\end{thm}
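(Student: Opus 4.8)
The plan is to exploit the fact that Property {\FW} implies Property {\FH} is \emph{not} available here (Thompson-type groups need not have Property (T)), so I cannot fix a point by an averaging argument on a Hilbert space. Instead I would work directly from the combinatorial characterization of {\FW} recalled in the introduction: a group has Property {\FW} if and only if every commensurating action has all commensurated subsets transfixed, equivalently every action on a {\sc{cat}}$(0)$ cube complex fixes a cube. The Thompson group $\Thom^*$ of the circle is known to act properly (in the relevant commensurating sense) on a naturally associated cube complex, and more concretely it admits a commensurating action with a commensurated subset whose associated length function has finite fibers---this is precisely Property {\PW} for $\Thom^*$. My first step would therefore be to produce such a commensurating action explicitly, using the dyadic (or Farey) interval structure on $\SS^1$.

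\textbf{Step 1: the commensurating action.} I would let $S$ be the set of all standard dyadic intervals of $\SS^1=\R/\Z$ (equivalently, edges appearing at some stage of the dyadic subdivision, matching the set $\B$ of boundary curves in the geometric picture). The group $\Thom^*$ acts on $S$ because an element $h\in\Thom^*$, being piecewise affine with dyadic breakpoints and slopes in $2^\Z$, maps standard dyadic intervals to standard dyadic intervals \emph{off} a finite exceptional set concentrated near the finitely many breakpoints. Fixing a basepoint, say the interval $[0,1/2]$ or more usefully the set $A$ of dyadic intervals lying to one side of a fixed dyadic point, one checks that $h(A)\triangle A$ is finite for every $h$: the symmetric difference is controlled by the breakpoints of $h$ together with the orientation/slope data, so $\Thom^*$ commensurates $A$. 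The content of Property {\PW} is that the length function $h\mapsto \#(A\triangle hA)$ has finite fibers, which forces any subgroup with Property {\FW} to be finite as soon as we know the action is ``proper enough''.

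\textbf{Step 2: transfixing forces finiteness.} Suppose $G\leq \Thom^*$ has Property {\FW}. Then $G$ transfixes $A$, i.e.\ there is a $G$-invariant $B$ with $A\triangle B$ finite; equivalently $h\mapsto\#(A\triangle hA)$ is bounded on $G$. Combined with the finite-fiber property of this length function from Step 1, a bounded-image argument shows $G$ is finite. The cleanest route, and the one I expect Farley and Hughes actually use, is to invoke the theorem that $\Thom^*$ acts on a {\sc{cat}}$(0)$ cube complex (built from the commensurated subset $A$ via the Sageev construction, or directly via the tree of dyadic subdivisions) in such a way that cube stabilizers are finite and the action has no global fixed point unless the acting subgroup is finite. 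Property {\FW} gives a fixed cube; finiteness of the stabilizer of that cube then gives finiteness of $G$. This is where the hypothesis genuinely enters. Finally, a finite subgroup of $\Thom^*\cong\Tho^*$ (homeomorphisms of $\SS^1$) is cyclic: a finite group of orientation-preserving circle homeomorphisms is conjugate into the rotation group, hence cyclic.

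\textbf{Main obstacle.} The real work is Step 1: constructing the commensurating/cubulated action with the \emph{finite fiber} (properness) property, since this is what makes {\FW} bite. Verifying that $\Thom^*$ commensurates $A$ is routine breakpoint bookkeeping, but proving that the length function has finite fibers---equivalently that the cube stabilizers are finite---requires showing that an element fixing $A$ up to a bounded error, and acting trivially on the relevant combinatorial data, must itself be of finite order. This rigidity is exactly the Farley--Hughes theorem being cited, so in practice I would \emph{quote} Theorem~\ref{thm:Farley-Navas} rather than reprove it; the cited references \cite{Farley:2003, hughesfarley} supply the cube complex and the properness of the action, and the deduction that a {\FW} subgroup is finite cyclic then follows from the fixed-cube property together with the elementary classification of finite subgroups of $\mathrm{Homeo}^+(\SS^1)$.
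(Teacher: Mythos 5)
Your proposal is correct and takes essentially the same route as the paper: the paper simply cites Hughes (who fixed a gap in Farley's construction) for Property {\PW} of $\Tho$, i.e.\ a commensurating action whose associated length function is proper, notes that Property {\FW} forces any subgroup to have bounded hence finite image under such an action, and concludes cyclicity because every finite group of orientation-preserving self-homeomorphisms of the circle is cyclic. Your cube-complex/fixed-cube packaging and explicit dyadic-interval commensurating set are just extra scaffolding around the same cited properness, which you too ultimately defer to \cite{Farley:2003, hughesfarley}.
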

Indeed fixing a gap in an earlier construction of Farley \cite{Farley:2003}\footnote{The gap in Farley's argument lies in Prop.\ 2.3 and Thm.\ 2.4 of \cite{Farley:2003}.}, Hughes proved \cite{hughesfarley} that $\Tho$ has Property PW, in the sense that it admits a commensurating action whose associated length function is a proper map (see also Navas' book \cite{Navas:Book}). This implies the conclusion, because every finite group of orientation-preserving self-homeomorphisms of the circle is cyclic.

Thus, if $\Gamma$ is a subgroup of $\Aut(\U)$ with Property {\FW}, it contains a finite index subgroup $\Gamma_0$
that acts trivially on the set $\B\subset \Hyp(X)$. This means that $\Gamma_0$ extends as a group of automorphisms
of $X$ fixing the boundary $\partial X$. Since $\partial X$ supports a big and nef divisor, $\Gamma_0$ contains a finite
index subgroup $\Gamma_1$ that is contained in $\Aut(X)^0$. 

Note that $\Gamma_1$ has Property {\FW}  because it is a finite index subgroup of $\Gamma$. It preserves
every irreducible component of the boundary curve $\partial X$, as well as its singularities. As such, it must act trivially 
on $\partial X$. When we apply Theorem~\ref{thm:classification-virtually-autom} to~$\Gamma_1$, 
the conjugacy $\varphi\colon X\to Y$ can not contract $\partial X$, because the boundary supports
an ample divisor. Thus, $\Gamma_1$ is conjugate to a subgroup of $\Aut(Y)$
that fixes a curve pointwise. This is not possible if $\Gamma_1$ is infinite (see Theorem~\ref{thm:classification-virtually-autom} and the remarks
following it).

We conclude that $\Gamma$ is finite in case (2) of Proposition~\ref{pro:Gizatullin-boundary}. 

\section{Birational actions of $\SL_2(\Z[\sqrt{d}])$}\label{scorcorSL2}

We develop here Example \ref{applx}. If $\bfk$ is an algebraically closed field of characteristic $0$, therefore containing ${\overline{\Q}}$, 
we denote by  $\sigma_1$ and $\sigma_2$ the distinct embeddings of $\Q(\sqrt{d})$ into $\bfk$. Let $j_1$ and $j_2$ be the resulting embeddings of $\SL_2(\Z[\sqrt{d}])$ into $\SL_2(\bfk)$, and $j=j_1\times j_2$ the compound embedding into $\mathsf{G}=\SL_2(\bfk)\times\SL_2(\bfk)$.

\begin{thm}\label{tSL2}
Let $\Gamma$ be a finite index subgroup of  $\SL_2(\Z[\sqrt{d}])$. Let $X$ be an irreducible projective surface over an algebraically closed field $\bfk$. Let $\alpha:\Gamma\to\Bir(X)$ be a homomorphism with infinite image. Then $\bfk$ has characteristic zero, and there exist a finite index subgroup 
$\Gamma_0$ of $\Gamma$ and a birational map $\varphi:Y\dasharrow X$ such that
\begin{enumerate}
\item $Y$ is the projective plane $\P^2$, a Hirzebruch surface $\F_m$, or $C\times\P^1$ for some curve $C$;
\item $\varphi^{-1}\alpha(\Gamma)\varphi\subset\Aut(Y)$;
\item there is a unique algebraic homomorphism $\beta:\mathsf{G}\to\Aut(Y)$ such that 
$
\beta(j(\gamma))=\varphi^{-1}\alpha(\gamma)\varphi
$
for every $\gamma\in\Gamma_0$.
\end{enumerate}
\end{thm}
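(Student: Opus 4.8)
The plan is to feed the representation $\alpha$ into Theorem~B and then upgrade the resulting birational action to an algebraic one by means of Margulis' superrigidity.

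First I would reduce to a smooth model: replacing $X$ by a desingularisation (available in dimension $2$ in every characteristic) changes neither $\Bir(X)$ nor the conclusion, so we may assume $X$ is a smooth projective surface. Since $\Gamma$ has finite index in $\SL_2(\Z[\sqrt{d}])$, it inherits Property~\FW, and hence so does its quotient $\alpha(\Gamma)$. As $\alpha(\Gamma)$ is an infinite subgroup of $\Bir(X)$ with Property~\FW, Theorem~B provides a birational map $\varphi\colon Y\dasharrow X$ with $Y\in\{\P^2,\Hirz_m,C\times\P^1\}$ and $\varphi^{-1}\alpha(\Gamma)\varphi\subset\Aut(Y)$; this already gives assertions~(1) and~(2). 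Moreover Theorem~B forces $Y=\P^2$ when $\mathrm{char}(\bfk)>0$.

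Next I would rule out positive characteristic, thereby proving the claim that $\mathrm{char}(\bfk)=0$. If $\mathrm{char}(\bfk)=p>0$ then $Y=\P^2$, so $\alpha(\Gamma)$ is an infinite subgroup of $\Aut(\P^2)=\PGL_3(\bfk)$, that is, a linear representation of the higher-rank lattice $\Gamma$ over a field of characteristic $p$. By Margulis' superrigidity such a representation has finite image (morally, the distorted generators of $\Gamma$ are forced to virtually unipotent, hence finite-order, elements in characteristic $p$). This contradicts the infiniteness of the image, so $\mathrm{char}(\bfk)=0$.

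From now on $\mathrm{char}(\bfk)=0$ and, writing $\Gamma$ for $\varphi^{-1}\alpha(\Gamma)\varphi\subset\Aut(Y)$, I would pin down the linear-algebraic structure and extend it. Since $Y$ is a smooth projective surface, Proposition~\ref{pro:BHW-Automorphisms} produces a finite index subgroup $\Gamma_0$ whose image lies in the connected algebraic group $\Aut(Y)^0$; by Lemma~\ref{lem:pgl2-fw} the infinite Property~\FW{} image has no finite orbit on the relevant projective lines, so the Zariski closure of $\alpha(\Gamma_0)$ has a nontrivial semisimple part and the image is unbounded. The group $\Gamma$ is an irreducible lattice in the rank-$2$ semisimple group $\SL_2(\R)\times\SL_2(\R)$, embedded through the two real places $\sigma_1,\sigma_2$ of $\Q(\sqrt{d})$, and $j=j_1\times j_2$ realises $j(\Gamma)$ as a Zariski-dense subgroup of $\mathsf{G}$. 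Applying Margulis' superrigidity to the homomorphism $\Gamma_0\to\Aut(Y)^0$ with infinite, unbounded, Zariski-dense image, I obtain an algebraic homomorphism $\beta\colon\mathsf{G}\to\Aut(Y)$ with $\varphi^{-1}\alpha(\gamma)\varphi=\beta(j(\gamma))$ for all $\gamma\in\Gamma_0$, the product $\mathsf{G}=\SL_2(\bfk)\times\SL_2(\bfk)$ being respected because each simple factor originates from one of the two places $\sigma_i$. Uniqueness of $\beta$ is then immediate: two algebraic homomorphisms agreeing on the Zariski-dense subgroup $j(\Gamma_0)$ coincide.

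The main obstacle is the clean application of superrigidity in the last step. One must reduce the coefficient field to $\C$ (using that $\Gamma_0$ is finitely generated and that, after Theorem~\ref{thm:classification-virtually-autom}, its image lies in $\PGL_2(\oZ)$, $\PGL_2(\oZ)\times\PGL_2(\oZ)$, $\PGL_3$, or $\Aut(\Hirz_m)$, all defined over $\overline{\Q}$), verify that the image is genuinely unbounded rather than precompact — for which Lemma~\ref{lem:pgl2-fw} is the key input — and check that the resulting extension is algebraic over $\bfk$ with the correct factorwise behaviour, so that $\beta$ is either trivial on, or an isogeny from, each $\SL_2(\bfk)$ factor according to $j_1$ and $j_2$. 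Treating the cases $Y=\P^2,\Hirz_m,C\times\P^1$ uniformly inside this framework is the delicate point.
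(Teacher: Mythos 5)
Your overall architecture is the same as the paper's: pass to a smooth model, apply Theorem~B to get (1)--(2), exclude positive characteristic, and obtain (3) from Margulis superrigidity. But there is a genuine gap at the step you yourself single out as the key input. You assert that Lemma~\ref{lem:pgl2-fw} shows the image of $\Gamma_0$ in $\Aut(Y)^0$ is ``genuinely unbounded rather than precompact''. It does not: that lemma excludes finite orbits on $\P^1$, which yields Zariski density (the closure contains $\PSL_2$), but it says nothing about precompactness. ``No finite orbit'' is perfectly compatible with precompact image --- a dense subgroup of ${\sf{PSU}}(2)\subset\PGL_2(\C)$ has no finite orbit on $\P^1(\C)$ --- and Property {\FW} by itself does not forbid infinite precompact linear images: infinite Kazhdan (hence {\FW}) groups embed densely into compact Lie groups, e.g.\ Galois twists of cocompact arithmetic lattices of ${\sf{Sp}}(n,1)$, $n\ge 2$, inside compact symplectic groups. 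Since Margulis' dichotomy is precisely ``precompact image or continuous extension'', the precompact branch is exactly what must be eliminated before $\beta$ can exist, and your proposal contains no valid argument for this. What actually eliminates it is the arithmeticity of $\Gamma$, not Property {\FW}: boundedness at every non-archimedean place of the (finitely generated) field of matrix entries forces algebraic-integer entries, boundedness at all places would force a finite image, hence some archimedean place gives an unbounded image, superrigidity applies there, and the resulting standard representation is never precompact. The paper packages this once and for all in the final Lemma of Section~\ref{scorcorSL2}: every homomorphism $\SL_2(\Z[\sqrt{d}])\to H(\bfk)$, with $H$ linear algebraic and $\mathrm{char}(\bfk)=0$, agrees on a finite index subgroup with $\bar f\circ j$ for a unique algebraic $\bar f\colon\mathsf{G}\to H$; this is proved by reducing (via Zariski density) to $H=\SL_n$ and $\bfk=\R$, citing Margulis' theorem for the arithmetic lattice, and then extending to arbitrary $\bfk$ by field embeddings. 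Note that this formulation needs no unboundedness or Zariski-density hypothesis on the image at all: a finite image is absorbed by the ``finite index subgroup'' clause, with $\bar f$ trivial.

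A secondary divergence: in positive characteristic the paper does not invoke superrigidity. It argues that $\GL_n(\bfk)$ has no infinite-order distorted elements when $\mathrm{char}(\bfk)=p>0$ (an infinite-order element must have a transcendental eigenvalue), so the exponentially distorted unipotents of $\Gamma$ are sent to finite-order elements, the kernel of $\alpha$ is infinite, and Margulis' normal subgroup theorem makes it of finite index, contradicting infiniteness of the image. Your parenthetical remark about distorted generators is in substance this argument; attributing the finiteness directly to ``superrigidity'' in characteristic $p$ is a heavier (though available) citation, and as written it conflates the two.
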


To prove this result, assume first that  $\bfk$ has positive characteristic. Theorem~B ensures that $Y$ is the projective plane, and the $\Gamma$-action is given by a homomorphism into $\PGL_3(\bfk)$. Then remark that every homomorphism $\tau :\Gamma\to\GL_n(\bfk)$ has finite image; indeed, it is well-known that $\GL_n(\bfk)$ has no infinite order distorted element: elements of infinite order have some transcendental eigenvalue and the conclusion easily follows. Since $\Gamma$ has an exponentially distorted cyclic subgroup, the kernel of $\tau$ is infinite, and by the Margulis normal subgroup theorem the image of $\tau$ is finite.

Now, assume that the characteristic of $\bfk$ is $0$. From
Theorem B, Assertions (1) and (2) are satisfied. 
If $Y$ is $\P^2$, $\P^1\times \P^1$, or a Hirzebruch surface $\F_m$, then $\Aut(Y)$ is a linear algebraic group. 
If $Y$ is a product $C\times \P^1$, with $g(C)\geq 1$, the projection onto $C$ gives a $\Gamma$-equivariant morphism; since
$g(C)\geq 1$, the automorphism group of $C$ is virtually abelian, and a finite index subgroup $\Gamma_1$ of $\Gamma$
acts trivially on $C$. Thus, the action of $\Gamma_1$ on $Y$  preserves the projection 
onto $\P^1$ and  acts via an embedding into the linear algebraic group $\Aut(\P^1)=\PGL_2(\bfk)$. 
Then, the proof of Theorem \ref{tSL2} follows from the next lemma.

\begin{lem}
Let $\bfk$ be a field containing $\Q(\sqrt{d})$.
Consider the compound embedding $j$ of $\SL_2(\Z[\sqrt{d}])$ into $G=\SL_2(\bfk)\times\SL_2(\bfk)$.
For every linear algebraic group $H$ and homomorphism $f:\SL_2(\Z[\sqrt{d}])\to H(\bfk)$, there exists a unique homomorphism $\tilde{f}:G\to H$ of $\bfk$-algebraic groups such that the homomorphisms $f$ and $\tilde{f}\circ j$ coincide on some finite index subgroup of $\Gamma$.
\end{lem}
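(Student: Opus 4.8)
The plan is to derive the lemma from Margulis' superrigidity theorem, once $\Lambda:=\SL_2(\Z[\sqrt d])$ has been realized as an irreducible lattice of higher rank. Set $\mathbf{G}=\mathrm{Res}_{\Q(\sqrt d)/\Q}\SL_2$; this is a connected, simply connected, $\Q$-simple semisimple $\Q$-group (its two geometric factors being permuted by $\mathrm{Gal}(\Q(\sqrt d)/\Q)$), and $\mathbf{G}(\R)=\SL_2(\R)\times\SL_2(\R)$ is a semisimple Lie group of real rank $2$ with no compact factor, in which $\Lambda=\mathbf{G}(\Z)$ sits as an irreducible arithmetic lattice. Since $\bfk\supseteq\Q(\sqrt d)$, the algebra $\Q(\sqrt d)\otimes_\Q\bfk$ splits as $\bfk\times\bfk$ through $a\otimes1\mapsto(\sigma_1(a),\sigma_2(a))$; hence $\mathbf{G}\otimes_\Q\bfk\cong G$, and under this identification the inclusion $\Lambda\subset\mathbf{G}(\bfk)=G(\bfk)$ is precisely the map $j=j_1\times j_2$. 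First I would record that $\Lambda$, being a lattice in $\mathbf{G}(\R)$ with no compact factor, is Zariski dense in $\mathbf{G}$ by the Borel density theorem; consequently $j(\Lambda)$ — and $j(\Lambda_0)$ for every finite-index subgroup $\Lambda_0\leq\Lambda$ — is Zariski dense in $G=\mathbf{G}\otimes_\Q\bfk$. This density is what will yield the uniqueness of $\bar f$.

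Next come the reductions on the target. If $f$ has finite image, its kernel is a finite-index subgroup on which $f$ is trivial, and one takes $\bar f$ trivial; so assume $f(\Lambda)$ is infinite. Replacing $H$ by the Zariski closure of $f(\Lambda)$ and $\Lambda$ by $f^{-1}(H^0)$, I would arrange that $H$ is connected and $f(\Lambda)$ is Zariski dense in $H$. One may further assume $H$ semisimple: any homomorphism from $\Lambda$ to an abelian group has finite image (by Margulis' normal subgroup theorem $\Lambda$ has finite abelianization, an infinite abelian quotient being impossible), and in characteristic $0$ the same holds for nilpotent, hence unipotent, targets; combined with Zariski density this disposes of the central torus and the unipotent radical of $H$. (Equivalently, one notes a posteriori that $\bar f(G)$, the algebraic image of the semisimple group $G$, is automatically semisimple.) With $H$ connected semisimple and $f\colon\Lambda\to H(\bfk)$ Zariski dense, I would invoke Margulis' superrigidity theorem in its algebraic form: it produces a homomorphism of $\bfk$-algebraic groups $\bar f\colon\mathbf{G}\otimes_\Q\bfk=G\to H$ agreeing with $f$ on a finite-index subgroup of $\Lambda$ through the identification above.

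I expect this last step to be the crux, and the main obstacle to lie in the correct application of superrigidity rather than in the bookkeeping. Concretely, one must verify the non-relatively-compact hypothesis at the relevant place, so that $f$ extends algebraically to the split factors of $G$ instead of factoring through a compact form; the favourable feature here is that $\Q(\sqrt d)$ has exactly two archimedean places, both real, so the two embeddings $\sigma_1,\sigma_2$ exhaust the unbounded behaviour, and the place-wise continuous extensions assemble into a single morphism of $\bfk$-algebraic groups. Finally, uniqueness is immediate from the first paragraph: if $\bar f_1,\bar f_2\colon G\to H$ both coincide with $f$ on finite-index subgroups $\Lambda_1,\Lambda_2$, then $\bar f_1$ and $\bar f_2$ agree on the Zariski-dense set $j(\Lambda_1\cap\Lambda_2)$, whence $\bar f_1=\bar f_2$. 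The homomorphism $\beta$ of Theorem~\ref{tSL2} is then recovered by composing $\bar f$ with the algebraic action of $\Aut(Y)$.
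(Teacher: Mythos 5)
Your backbone is the same as the paper's: realize $\SL_2(\Z[\sqrt d])$ as the irreducible higher-rank lattice $\mathbf{G}(\Z)$ in $\mathbf{G}(\R)=\SL_2(\R)\times\SL_2(\R)$, where $\mathbf{G}=\mathrm{Res}_{\Q(\sqrt d)/\Q}\SL_2$, deduce uniqueness from Zariski density of $j(\Lambda_0)$, and deduce existence from Margulis superrigidity; the setup and the uniqueness argument are correct. The genuine gap is at the step you yourself call the crux: you ``invoke Margulis superrigidity in its algebraic form'' to produce directly a homomorphism of $\bfk$-algebraic groups, for an \emph{arbitrary} field extension $\bfk$ of $\Q(\sqrt d)$. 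Margulis' theorem concerns targets over local fields; here $\bfk$ can be $\Q(\sqrt d)$ itself, or $\Q_p(\sqrt d)$, or a field of enormous transcendence degree, with no relation to $\R$, so your closing remark that ``the place-wise continuous extensions assemble into a single morphism of $\bfk$-algebraic groups'' is not an application of the theorem but precisely the assertion that needs proof. Bridging from the archimedean statement to an abstract $\bfk$ is the actual content of the paper's proof: it treats $\bfk=\R$ first (superrigidity plus the fact that continuous finite-dimensional representations of the ambient semisimple Lie group are algebraic), deduces the case of fields containing $\R$, and then handles general $\bfk$ by embedding it into an overfield of $\R$ and descending the resulting morphism back to $\bfk$ --- the descent being available because, by Zariski density and finite generation of $\Lambda$, the graph of $\bar f$ is the Zariski closure of the graph of $f$ restricted to a finite-index subgroup, hence is defined over $\bfk$. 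Your proposal omits this bridge entirely.

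A secondary gap lies in your reduction to semisimple $H$. Finite abelianization does kill the central torus of a reductive Zariski closure, but it does not dispose of the unipotent radical: a connected perfect group can have nontrivial unipotent radical (e.g.\ $\SL_2\ltimes\A^2$), and there is no homomorphism $H\to R_u(H)$ to which your observation about unipotent targets could be applied. Ruling out a unipotent part of the Zariski closure --- equivalently, proving that finite-dimensional representations of $\Lambda$ are virtually completely reducible --- requires a cohomology vanishing statement ($H^1(\Lambda_0,V)=0$ for finite-dimensional $V$), which belongs to the same Margulis circle of ideas but is a theorem, not bookkeeping. Note how the paper sidesteps the structure theory of $H$ altogether: it embeds $H$ into $\SL_n$, extends the composite homomorphism, and observes that the extension maps $G$ into $H$ because $j(\Lambda_0)$ is Zariski dense in $G$ while $f(\Lambda_0)\subset H(\bfk)$; the price is that superrigidity must then be quoted in the packaged form ``every homomorphism of the lattice into $\SL_n(\R)$ virtually extends,'' which already incorporates these reductions (including the exclusion of infinite bounded image).
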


\begin{proof}[Sketch of Proof]
The uniqueness is a consequence of Zariski density of the image of~$j$. Let us prove the existence. Zariski density allows to reduce to the case when $H=\SL_n$. In the case $\bfk=\R$, one first remarks that the image of $\SL_2(\Z[\sqrt{d}])$ in $\SL_n(\R)$ is not contained in a compact group because $\SL_2(\Z[\sqrt{d}])$ contains exponentially distorted elements. Then, Margulis' superrigidity and the fact that every continuous real representation of $\SL_2(\R)$ is algebraic prove the lemma. The case of fields containing $\R$ immediately follows, and in turn it follows for subfields of overfields of $\R$ (as soon as they contain $\Q(\sqrt{d})$).
\end{proof}

\section{Open problems}

\begin{que}
Let $\Gamma$ be a group with Property {\FW}. Is every birational action of $\Gamma$ regularizable ? Here regularizable is defined in the same way as pseudo-regularizable, but assuming that the action on $\U$ is by automorphisms (instead of pseudo-automorphisms).
\end{que}

A particular case is given by Calabi-Yau varieties (simply connected complex projective manifolds $X$ with trivial canonical bundle
and $h^{k,0}(X)=0$ for $0< k<\dim(X)$). For such a variety, $\Bir(X)$ coincides with $\Psaut(X)$. 
One can then ask (1) whether every  subgroup $\Gamma$ of $\Psaut(X)$ with property {\FW} is regularizable on some 
birational model $Y$ of $X$ (without restricting the action to a dense open subset), and (2) what are the possibilities 
for such a group~$\Gamma$.

\begin{que}\label{ruled3}
For which irreducible projective varieties $X$ 
\begin{enumerate}
\item\label{bi1} $\Bir(X)$ does not transfix $\Hy(X)$?
\item\label{bi2} some finitely generated subgroup of $\Bir(X)$ does not transfix $\Hy(X)$?
\item\label{bi3} some cyclic subgroup of $\Bir(X)$ does not transfix $\Hy(X)$.
\end{enumerate}
\end{que}

We have the implications: $X$ is ruled $\Rightarrow$  (\ref{bi3}) $\Rightarrow$ (\ref{bi2}) $\Rightarrow$ (\ref{bi1}). In dimension $2$, we have: ruled $\Leftrightarrow$ (\ref{bi1}) $\Rightarrow\!\!\!\!\!\!\!\!\!/$ (\ref{bi2}) $\Leftrightarrow$ (\ref{bi3}) (see \S\ref{surf_birt}). It would be interesting to find counterexamples to these equivalences in higher dimension, and settle each of the problems raised in Question \ref{ruled3} in dimension $3$.

\vspace{0.1cm}

The group of affine transformations of $\A^3_\C$ contains $\SL_3(\C)$, and this group contains many subgroups with 
Property {\FW}. For surfaces, Theorem~B shows that groups of birational transformations with Property~{\FW}
are contained in algebraic groups, up to conjugacy. The following question asks whether this type of theorem may hold
for $\Aut(\A^3_\C)$.  

\begin{que} 
Does there exist an infinite subgroup of $\Aut(\A^3_\C)$ with Property~{\FW} that is not conjugate to 
a group of affine transformations of $\A^3_\C$ ?
\end{que}

Recall that a length function $\ell$ on a group $G$ is a function $\ell \colon G\to \R_+$ such that $\ell(g)=0$ if and only if $g$ is
the neutral element, 
$\ell(g)=\ell(g^{-1})$, and $\ell(gh)\leq \ell(g)+\ell(h)$ 
for every pair of elements $g$ and $h$ in $G$. A length function is quasi-geodesic if there exists $M>0$ such that for every $n\ge 1$ and every $g\in G$ with $\ell(g)\le n$, there exist $1=g_0$, $g_1,\dots,$ $g_n=g$ in $G$ such that $\ell(g_{i-1}^{-1}g_i)\le M$ for all $i$. Equivalently $G$, endowed with the distance $(g,h)\mapsto\ell(g^{-1}h)$, is quasi-isometric to a connected graph.

\begin{que}
Given an irreducible variety $X$, is the length function 
\[
g\in \Bir(X) \mapsto |\Hy(X)\triangle g\Hy(X)|
\] 
quasi-geodesic? In particular, what about $X=\P^2$ and the Cremona group $\Bir(\P^2)$?
\end{que}

\bibliographystyle{plain}
\bibliography{references-fw}

\end{document}